\newtheorem{theorem}{Theorem}[section]
\newtheorem{proposition}[theorem]{Proposition}
\newtheorem{lemma}[theorem]{Lemma}
\newtheorem{corollary}[theorem]{Corollary}
\theoremstyle{definition}
\newtheorem{definition}[theorem]{Definition}
\theoremstyle{remark}
\newtheorem{remark}[theorem]{Remark}
\numberwithin{equation}{section}
\DeclareMathOperator{\Vol}{Vol}
\DeclareMathOperator{\supp}{supp}
\DeclareMathOperator{\TO}{TO}
\DeclareMathOperator{\RO}{RO}
\DeclareMathOperator{\NO}{NO}
\DeclareMathOperator{\TOloc}{\TO_{\mathrm{loc}}}
\DeclareMathOperator{\TOzloc}{\TO^0_{\mathrm{loc}}}
\DeclareMathOperator{\ROloc}{\RO_{\mathrm{loc}}}
\DeclareMathOperator{\tTOloc}{\widetilde{\TO}_{\mathrm{loc}}}
\DeclareMathOperator{\tTOzloc}{\widetilde{\TO}^0_{\mathrm{loc}}}
\DeclareMathOperator{\tROloc}{\widetilde{\RO}_{\mathrm{loc}}}
\newcommand{\ud}{\mathrm{d}}
\newcommand{\N}{\mathbb N}
\newcommand{\C}{\mathbb C}
\newcommand{\R}{\mathbb R}
\newcommand{\Z}{\mathbb Z}
\newcommand{\DD}{\mathcal{D}}
\newcommand{\cM}{\mathcal{M}}
\newcommand{\cS}{\mathcal{S}}
\newcommand{\cN}{\mathcal{N}}
\newcommand{\dx}{{\rm d}x }
\newcommand{\dxi}{{\rm d}\xi }
\newcommand{\dt}{{\rm d}t }
\newcommand{\deta}{{\rm d}\eta }
\DeclareMathOperator{\id}{id}
\DeclareMathOperator{\csn}{csn}
\DeclareMathOperator{\Hom}{Hom}
\newcommand{\cD}{\mathcal{D}}
\let\Linc\cL
\newcommand{\cE}{\mathcal{E}}
\newcommand{\cG}{\mathcal{G}}
\newcommand{\cI}{\mathcal{I}}
\newcommand{\cA}{\mathcal{A}}
\newcommand{\coleq}{\coloneqq}
\newcommand{\e}{\varepsilon}
\newcommand{\cEloc}{\cE_{\textrm{loc}}}
\newcommand{\cGloc}{\cG_{\textrm{loc}}}
\newcommand{\cGsloc}{\cG^*_{\textrm{loc}}}
\tikzset{join/.code=\tikzset{after node path={%
\ifx\tikzchainprevious\pgfutil@empty\else(\tikzchainprevious)%
edge[every join]#1(\tikzchaincurrent)\fi}}}
\tikzset{>=stealth',every on chain/.append style={join},
         every join/.style={->}}
\tikzstyle{labeled}=[execute at begin node=$\scriptstyle,
\begin{document}
\author[A.~Debrouwere]{Andreas Debrouwere}
\address{Department of Mathematics, Ghent University, Krijgslaan 281, 9000 Gent, Belgium}
\email{Andreas.Debrouwere@UGent.be}
\author[E.A.~Nigsch]{Eduard A.~Nigsch}
\address{Wolfgang Pauli Institute, Oskar-Morgenstern-Platz 1, 1090 Vienna, Austria}\email{eduard.nigsch@univie.ac.at}
\title{Sheaves of nonlinear generalized function spaces}
\date{July 2016}
\thanks{A. Debrouwere gratefully acknowledges support by Ghent University, through a BOF Ph.D.-grant} 
\thanks{E.~A.~Nigsch~was supported by grant P26859 of the Austrian Science Fund (FWF)}
\keywords{Colombeau algebras, multiplication of ultradistributions, nonlinear generalized functions, full algebra}
\subjclass[2010]{primary: 46F30, secondary: 46T30}
\begin{abstract}
We provide a framework for the construction of diffeomorphism invariant sheaves of nonlinear generalized functions spaces. As an application, global algebras of generalized functions for distributions on manifolds and diffeomorphism invariant algebras of generalized functions for ultradistributions are constructed.
\end{abstract}
\maketitle

\section{Introduction}
The theory of generalized functions developed by L.~Schwartz \cite{TD} suffers from the fact that in general one cannot define nonlinear operations (like multiplication) on distributions, so the use of this theory for nonlinear problems is limited. In the 1980s, differential algebras of nonlinear generalized functions were developed by J.~F.~Colombeau \cite{ColNew,ColElem} in order to study nonlinear PDEs with singular data or coefficients. These Colombeau algebras have found numerous applications for instance in connection with PDEs involving singular data and/or coefficients, singular differential geometry and general relativity. In particular, a diffeomorphism invariant formulation of the theory was developed in \cite{found,global} and recently extended to the vector-valued setting in \cite{bigone}.

Spaces of nonlinear generalized ultradistributions were studied in \cite{zbMATH06071218,zbMATH02134784,1126.46030,zbMATH04205141,zbMATH01618580}. The most recent variant, developed in \cite{D-V-V}, is optimal in the sense that the embedding of ultradistributions of class $M_p$ there preserves the product of all ultradifferentiable functions of class $M_p$; this is an improvement over the previous variants where only the product of ultradifferentiable functions of a strictly more regular class had been preserved. The setting of \cite{D-V-V} is that of \emph{special} Colombeau algebras, which allows for a simpler development of the theory but makes it impossible to obtain diffeomorphism invariance (cf.~\cite[Chapter 2]{GKOS}). \emph{Full} Colombeau algebras, on the other hand, are technically more involved but allow for an embedding of distributions that is diffeomorphism invariant and in addition commutes with arbitrary derivatives (cf.~\cite{found,global, papernew}). Hence, for applications in a geometric context it is essential that a formulation of the theory in the full setting is obtained.

As a continuation of the development of \cite{papernew,gf14proc} and \cite{D-V-V} we work out the abstract formulation of the construction of sheaves of nonlinear generalized function spaces. In particular, it turns out that very little structure is needed on the underlying spaces of generalized functions as the respective arguments mainly concern the sheaf structure.

Our construction applies at the same time to distributions and to ultradistributions, both of Beurling and Roumieu type, which leads to the following results. 

\begingroup
\def\thetheorem{\ref{thmone}}
\begin{theorem}Let $M$ be a paracompact Hausdorff manifold. There is an associative commutative algebra $\cGloc(M)$ with unit containing $\cD'(M)$ injectively as a linear subspace and $C^\infty(M)$ as a subalgebra. $\cGloc(M)$ is a differential algebra, where the derivations $\widehat L_X$ extend the usual Lie derivatives from $\cD'(M)$ to $\cGloc(M)$, and $\cGloc$ is a fine sheaf of algebras over $M$.
\end{theorem}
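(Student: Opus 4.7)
The plan is to instantiate the abstract sheaf-theoretic framework developed in the earlier parts of the paper with a concrete basic space of smoothing objects suited to distributions on a manifold, and then to verify each claim in turn. The first step is to fix the basic sheaf: on each open $U\subseteq M$ one takes $\cEloc(U)$ to consist of smooth functions $R(\phi,x)$ of a smoothing-kernel parameter $\phi$ (ranging over a suitable space of compactly supported test objects on $M$) and a point $x\in U$, together with the chosen submodules $\cEloc^{\mathrm{mod}}(U)$ and $\cN_{\mathrm{loc}}(U)$ of moderate and negligible representatives defined by the asymptotic conditions on $\phi$. One then sets $\cGloc(U)\coloneqq \cEloc^{\mathrm{mod}}(U)/\cN_{\mathrm{loc}}(U)$. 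That this is an associative commutative algebra with unit follows pointwise in $\phi$ and $x$ from the corresponding fact on $C^\infty$, provided the moderateness and negligibility conditions are closed under products; this is built into the construction so no new work is required here.

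Next I would construct the two embeddings. The smooth function $f\in C^\infty(U)$ lifts to the class of $(\phi,x)\mapsto f(x)$, which sits in $\cEloc^{\mathrm{mod}}$ automatically and is clearly a ring homomorphism. For the embedding $\iota\colon \cD'(U)\hookrightarrow \cGloc(U)$, one sends $u$ to the class of $(\phi,x)\mapsto \langle u,\phi(x,\cdot)\rangle$ (the smoothing of $u$ against the kernel $\phi$ localized at $x$). The principal items to check are that (i) this representative is moderate, which comes from the standard seminorm estimates on distributions against test functions, (ii) $\iota$ is linear and commutes with restriction to smaller opens, and (iii) $\iota$ is injective, which reduces to showing that a distribution whose smoothings are negligible must itself vanish—this follows by testing against $\phi(x,\cdot)$ that converges to the Dirac mass at $x$.

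The Lie derivatives $\widehat L_X$ are defined on representatives by transporting the kernel parameter: $\widehat L_X R(\phi,x)\coloneqq L_X(R(\phi,\cdot))(x) - R(L'_X\phi,x)$, where $L'_X$ is the dual action on kernels. One verifies that moderateness and negligibility are preserved so that $\widehat L_X$ descends to $\cGloc(U)$. On the image $\iota(\cD'(U))$ the extra term involving $L'_X$ is precisely what, after integration by parts against $u$, yields the classical Lie derivative $L_Xu$; thus $\widehat L_X\circ \iota = \iota\circ L_X$. The sheaf property is immediate from the local definition of representatives and from the fact that both moderateness and negligibility are defined by seminorms on compact sets. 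Fineness follows by multiplying representatives by (the pullback of) any smooth partition of unity $(\chi_i)$ on $M$: each $\chi_i\cdot(\cdot)$ is an endomorphism of $\cGloc$ supported in $\supp\chi_i$, and they sum to the identity because their action is pointwise in $x$.

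The main obstacle is ensuring that all the above survives under diffeomorphism, i.e.\ that the basic space of smoothing kernels can be chosen intrinsically on $M$ so that changes of chart act in a well-defined manner on representatives and preserve both the moderate and negligible subspaces. This is the analogue of the technical core of \cite{found,global}; in the present sheaf-theoretic setup it should reduce to checking the axioms of the abstract framework for the chosen kernel functor. Once diffeomorphism invariance of the basic space and the two ideals is in place, the remaining verifications—injectivity of $\iota$, the Leibniz rule for $\widehat L_X$, and the compatibility of $\iota$ with $L_X$—are routine consequences of the standard smoothing calculus, and the sheaf and fineness statements follow from the partition-of-unity argument sketched above.
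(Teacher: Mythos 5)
Your overall strategy --- instantiate the abstract quotient construction with the pair $(\cD',C^\infty)$ and then read off the algebra, embedding, sheaf and fineness claims --- is the same as the paper's (Section \ref{sec_appldist} is exactly this), but two of your steps do not go through as written. First, your formula for the Lie derivative, $\widehat L_X R(\phi,x)= L_X(R(\phi,\cdot))(x)-R(L_X'\phi,x)$, evaluates $R$ at the transported kernel instead of differentiating $R$ in that direction. The correct correction term (see \Cref{extension-der}, and likewise \cite{found,global}) is the differential $\ud R(\phi,x)$ applied to $L_X'\phi$ (in the paper's operator language, $\ud R(\Phi)(T\circ\Phi-\Phi\circ T)$). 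The two agree on $\iota(\cD'(U))$, where $R$ is linear in the kernel slot --- which is why your consistency check with the classical Lie derivative still works --- but on products the difference is fatal: $R\mapsto R(L_X'\phi,\cdot)$ is not a derivation of the pointwise product, so with your formula $\widehat L_X$ violates the Leibniz rule and the claim that $\cGloc(M)$ is a \emph{differential} algebra fails. The Leibniz rule is not a ``routine consequence'' here; it is exactly what forces the differential into the definition.

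Second, the one piece of genuine analysis behind this theorem is the existence of admissible test objects, and you assume it away. Moderateness and negligibility are quantified over nets of smoothing kernels/operators satisfying the growth, convergence and localization conditions $(\TO)_1$--$(\TO)_3$; the injectivity of $\iota$ (your ``$\phi(x,\cdot)$ converging to the Dirac mass''), the non-degeneracy of the quotient, and the sheaf property via \Cref{soft-RO} all collapse if no such nets exist on a given open $U\subseteq M$. The paper quotes \cite{bigone} for their construction (mollification in charts, glued by the sheaf structure of $\tTOloc$ established in \Cref{sheaf-RO} and \Cref{sheaf-TO}), whereas you write that ``this is built into the construction so no new work is required.'' Relatedly, by parametrizing the basic space by kernels $\phi(x,\cdot)$ rather than by operators $\Phi\in\Linc(\cD'(U),C^\infty(U))$ you reintroduce the chart-dependence problem that you yourself identify as ``the main obstacle'' and then defer; the paper's basic space $C^\infty(\Linc_b(\cD'(U),C^\infty(U)),C^\infty(U))$ is intrinsic by construction, so that obstacle never arises there. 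As it stands, your proposal is a reasonable outline of the older construction of \cite{found,global}, but the derivation formula is wrong and the existence of test objects --- the actual mathematical content of the proof --- is missing.
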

\addtocounter{theorem}{-1}
\endgroup

As customary, we write $\ast$ instead of $(M_p)$ or $\{M_p\}$ to treat the Beurling and Roumieu case simultaneously. For the following theorem, let $M_p$ be a weight sequence satisfying $(M.1)$, $(M.2)$, and $(M.3)'$.
\begingroup
\def\thetheorem{\ref{thmtwo}}
\begin{theorem}For each open set $\Omega \subseteq \R^n$ there is an associative commutative algebra with unit $\cGsloc(\Omega)$ containing $\cD^{*\prime}(\Omega)$ injectively 
as a linear subspace and $\cE^*(\Omega)$ as a subalgebra. $\cGsloc(\Omega)$ is a differential algebra, where the partial derivatives $\widehat \partial_i$, $i = 1,\ldots,n$, extend the usual partial derivatives from $\cD^\ast(\Omega)$ to $\cGsloc(\Omega)$, and $\cGsloc$ is a fine sheaf of algebras over $\Omega$. Moreover, the construction is invariant under real-analytic coordinate changes, i.e., if $\mu \colon \Omega' \to \Omega$ is a real-analytic diffeomorphism then there is a map $\widehat \mu 
\colon \cGsloc(\Omega') \to \cGsloc(\Omega)$ compatible with the canonical embeddings $\iota$ and $\sigma$. 
\end{theorem}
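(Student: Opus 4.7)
The plan is to deduce Theorem~\ref{thmtwo} by specializing the abstract framework of sheaves of nonlinear generalized function spaces developed earlier in the paper to the ultradifferentiable setting of class $M_p$. Concretely, I would fix as the underlying presheaf of test objects an ultradifferentiable variant of Colombeau's smoothing kernels (indexed by a scaling parameter, with growth bounds governed by $M_p$), and then define $\cGsloc(\Omega)$ as the quotient of the moderate nets modulo the negligible ones. The associative, commutative, unital algebra structure and the (pre)sheaf structure are then inherited directly from the abstract theorems, provided one verifies that the ultradifferentiable input data satisfy their hypotheses; this is largely a matter of matching seminorm estimates on $\cE^{*}$ and $\cD^{*\prime}$ against the growth condition prescribed by $M_p$.

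The embeddings $\sigma\colon \cE^{*}(\Omega)\to\cGsloc(\Omega)$ (as the class of the constant net) and $\iota\colon \cD^{*\prime}(\Omega)\to\cGsloc(\Omega)$ (via smoothing against the test objects) are constructed next. The three key verifications are: (i) the smoothed net associated to $u\in\cD^{*\prime}(\Omega)$ is moderate, which reduces to the continuous seminorm estimates characterizing ultradistributions; (ii) $\iota$ is injective, by evaluating representatives on the smoothing kernels; and (iii) for every $f\in\cE^{*}(\Omega)$ the difference $\iota(f)-\sigma(f)$ is negligible, so that $\sigma$ indeed embeds $\cE^{*}(\Omega)$ as a subalgebra. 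Step~(iii) is the analogue in the full setting of the sharp $\cE^{*}$-compatibility established in the special algebra of \cite{D-V-V}, and it is here that the optimality noted in the introduction is inherited; I expect this to be the most delicate of the three calculations.

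The partial derivatives $\widehat\partial_i$ are then defined by differentiating representatives; that they preserve moderateness and negligibility and extend the classical ones on $\cD^{*\prime}$ is automatic by integration by parts in the smoothing step. The fine sheaf property follows from the existence of $\cE^{*}$ partitions of unity subordinate to any open cover of $\Omega$, which is guaranteed precisely by the conditions $(M.1)$, $(M.2)$, and the non-quasi-analyticity $(M.3)'$ imposed on the weight sequence.

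For the final claim, real-analytic invariance, I would define $\widehat\mu$ by pulling back both the representative and its test-object argument along $\mu$. Compatibility with $\sigma$ and $\iota$ is then just the classical change of variables for ultradifferentiable functions and for ultradistributions, respectively. The main technical obstacle is showing that the pullback preserves ultradifferentiable moderateness and negligibility: this is where the real-analyticity of $\mu$ is essential, since composition with a real-analytic map preserves every ultradifferentiable class satisfying $(M.1)$--$(M.3)'$ with uniform Faà di Bruno-type estimates, whereas a general $C^\infty$ diffeomorphism need not. Once the pullback estimates are in place, the functoriality $\widehat{\mu\circ\nu}=\widehat\mu\circ\widehat\nu$ follows by inspection on representatives, completing the proof.
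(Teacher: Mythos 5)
Your overall strategy matches the paper's: both deduce the theorem by specializing the abstract sheaf machinery of Sections 3--5 to the test pair of sheaves $(\cD^{*\prime},\cE^{*})$ with the scales $\cA^{*}=\{e^{M_{(\cdot)}(\lambda/\e)}\}$, $\cI^{*}=\{e^{-M_{(\cdot)}(\lambda/\e)}\}$, and both obtain fineness from ultradifferentiable partitions of unity (which is where $(M.3)'$ enters) and diffeomorphism invariance from the continuity of real-analytic pullbacks on $\cD^{*\prime}$ and $\cE^{*}$ together with \Cref{diff-global}. However, you have mislocated the difficulty. Your three ``key verifications'' --- moderateness of $\iota(u)$, injectivity of $\iota$, and negligibility of $\iota(f)-\sigma(f)$ --- are, in this framework, immediate consequences of \Cref{properties-embedding}: they follow formally from properties $(\TO)_1$, $(\TO)_3$ and $(\TO)_2$ of test objects, respectively. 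In particular step (iii), which you single out as the most delicate calculation, costs nothing once a test object exists. Likewise $\widehat\partial_i\circ\iota=\iota\circ\partial_i$ is the purely algebraic identity of \Cref{extension-der}; no integration by parts is needed.

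The genuine content of the theorem, which your proposal glosses over with the phrase ``an ultradifferentiable variant of Colombeau's smoothing kernels,'' is the proof that $\TOloc(\Omega,\cD^{*\prime},\cE^{*},\operatorname{sc}^{*})$ is \emph{nonempty}. This is nontrivial: one needs a localizing net $(\Phi_\e)_\e$ of regularization operators such that $\Phi_\e(\varphi)-\varphi$ is $O(e^{-M(\lambda/\e)})$ for the relevant range of $\lambda$ uniformly in the $\cE^{*}$ seminorms, and a naively scaled fixed mollifier $\e^{-d}\varphi(\cdot/\e)$ fails this because its Fourier transform does not equal $1$ to within $e^{-M(\lambda/\e)}$ on frequencies $|\xi|\lesssim 1/\e$. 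The paper's construction takes $\theta_\e(x)=\e^{-d}\mathcal{F}^{-1}(\psi)(x/\e)\,\chi(x/r_\e)$, where $\psi$ is a compactly supported Fourier-side cutoff of class $(M_p)$, $\chi$ is a cutoff of a strictly smaller class $N_p\prec M_p$ (which exists by Komatsu's Lemma 4.3), and $(r_\e)_\e$ is a slowly decaying net calibrated so that $M(t)\le N(r_\e t)+M(\lambda/\e)$ for small $\e$ (\Cref{M_p-N_p}); the required estimates are then proved on the Fourier side (\Cref{growth-fourier-theta}). Without this two-weight-sequence device, or an equivalent one, your argument does not get off the ground, since every subsequent step quantifies over a set of test objects that you have not shown to be inhabited.
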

\addtocounter{theorem}{-1}
\endgroup

The structure of this article is as follows.
\begin{itemize}
 \item We collect some preliminary notions in \Cref{sec_preliminaries}.
 \item The basic spaces containing the representatives of nonlinear generalized functions are introduced in \Cref{global-theory}.
 \item The quotient construction, which ensures that the product of smooth or of ultradifferentiable functions is preserved, is detailed in \Cref{sec_quotient}.
 \item Sheaf properties of the quotient space are established in \Cref{local-theory}.
 \item The construction of diffeomorphism invariant differential algebras of distributions and ultradistributions is given in \Cref{sec_appldist} and \Cref{sec_applications}, respectively.
\end{itemize}

\section{Preliminaries}\label{sec_preliminaries}

Our general references are \cite{TD} for distribution theory, \cite{Komatsu,Komatsu2,Komatsu3} for ultradistributions and \cite{ColNew, ColElem, MOBook,GKOS} for Colombeau algebras.

We set $I=(0,1]$, $\R_+ = [0, \infty)$ and $\N = \{0,1,2,\dotsc\}$. Given a set $M$, $\id_M$ (or simply $\id$ if the set is clear from the context) denotes the identity mapping on $M$. For an element $\lambda \in (\R_+)^I$ we write $\lambda(\e) = \lambda_\e$. Furthermore, Landau's $O$-notations are always meant for $\e \to 0^+$. Given two locally convex spaces $E$ and $F$, $\Linc_b(E,F)$ denotes the space of continuous linear mappings from $E$ into $F$ endowed with the topology of bounded convergence. $\Linc_\sigma(E,F)$ is this space endowed with the weak topology instead. We denote by $\csn(E)$ the set of continuous seminorms on $E$. An algebra always means an associative commutative algebra over $\C$, and a locally convex algebra is an algebra endowed with a locally convex topology such that its multiplication is jointly continuous. $C^\infty(E,F)$ is the space of smooth functions $E \to F$ in the sense of convenient calculus \cite{KM}, with $C^\infty(E) \coleq C^\infty(E, \C)$; in this context, $\ud^k f$ denotes the $k$th differential of a mapping $f \in C^\infty(E, F)$.

Colombeau algebras are usually defined by means of a quotient construction employing certain asymptotic scales. Most frequently a polynomial scale is used for this purpose, but we will employ more general scales based on \cite{G-B} instead, which increases the flexibility regarding applications.

\begin{definition} A set $\cA \subseteq (\R_+)^I$ is said to be an \emph{asymptotic growth scale} if
\begin{enumerate}[label=(\roman*)]
\item $\forall \lambda, \mu \in \mathcal {A}$ $\exists \nu \in \cA$: $\lambda_\e + \mu_\e = O(\nu_\e)$,
\item $\forall \lambda, \mu \in \mathcal {A}$ $\exists \nu \in \cA$: $\lambda_\e \mu_\e = O(\nu_\e)$,
\item  $\exists \lambda \in \cA$: $\displaystyle \liminf_{\e \to 0^+} \lambda _\e > 0$.
\end{enumerate}
A set $\cI \subseteq (\R_+)^I$ is said to be an \emph{asymptotic decay scale} if
\begin{enumerate}[label=(\roman*),resume]
\item $\forall \lambda \in \mathcal {I}$ $\exists \mu, \nu \in \cI$: $\mu_\e + \nu_\e = O(\lambda_\e)$,
\item $\forall \lambda \in \mathcal {I}$ $\exists \mu, \nu \in \cI$: $\mu_\e \nu_\e = O(\lambda_\e)$,
\item $\exists \lambda \in \cI$: $\displaystyle \lim_{\e \to 0^+} \lambda _\e = 0$.
\end{enumerate}
We call a pair $(\cA, \cI)$ an \emph{admissible pair of scales} if $\cA$ is an asymptotic growth scale, $\cI$ is an asymptotic decay scale, and the following two properties are satisfied:
\begin{enumerate}[label=(\roman*),resume]
\item $\forall \lambda \in \cI$ $\forall \mu \in \cA$ $\exists \nu \in \cI$: $\mu_\e \nu_\e = O(\lambda_\e)$,
\item $\exists \lambda \in \cA$ $\exists \mu \in \cI$: $\mu_\e = O(\lambda_{\e})$.
\end{enumerate}
\end{definition}

The prototypical scale to keep in mind is given by the polynomial scale
\begin{equation}\label{polyscale}
\cA = \cI = \{\, \e \mapsto \e^k\ |\ k \in \Z\,\},
\end{equation}
which is easily verified to give an admissible pair. For a detailed study of asymptotic scales we refer to \cite{1126.46030,G-B}. 

\section{The basic space}\label{global-theory}
A main principle behind Colombeau algebras is to represent singular functions by regular ones and thus define classical operations like multiplication on the former through the latter. Usually the roles of singular and regular functions are played by $\DD'$ and $C^\infty$, respectively, but for our considerations we will replace these spaces by a more general pair of locally convex spaces $E$ and $F$. Such a pair $(E,F)$ is called a \emph{test pair} if $F \subseteq E$ and the topology on $F$ is finer than the one induced by $E$. Throughout this section we fix a test pair $(E,F)$.
\begin{definition}\label{basic-space} We define the \emph{basic space} as
\[ \cE(E,F) \coleq C^\infty(\Linc_b(E,F), F) \]
and the canonical linear embeddings of $E$ and $F$ into $\cE(E,F)$ via
\begin{gather*}
\iota \colon  E \to \cE(E,F), \quad \iota(u)(\Phi) \coleq \Phi(u), \\
\sigma \colon  F \to \cE(E,F), \quad \sigma(\varphi)(\Phi) \coleq \varphi.
\end{gather*}
\end{definition}

There are three common ways of transferring classical operations $T$ on $E$ and $F$ to elements $R$ of the basic space $\cE(E,F)$. These are, in brief, given as follows:
\begin{align*}
 (\widetilde T R)(\Phi) &\coleq T ( R (\Phi)), \\
 (T_* R)(\Phi) &\coleq T ( R ( T^{-1} \circ \Phi \circ T)),\\
 (\widehat T R)(\Phi) &\coleq -\ud R(\Phi) ( T \circ \Phi - \Phi \circ T) + T ( R ( \Phi )).
\end{align*}

We will now specify in which situation they are well-defined on the basic space, and when each variant is employed.

The first one amounts to applying an operation on $F$ after inserting the parameter $\Phi \in \Linc(E,F)$. This defines the vector space structure of $\cE(E,F)$ and its algebra structure if $F$ is a locally convex algebra. Moreover, this is used for extending directional derivatives and especially the covariant derivative in geometry (see \cite{bigone}). For multilinear mappings it is  formulated as follows:

\begin{lemma}\label{extension-bs}
Let $T \colon F \times \cdots \times F \to F$ be a jointly continuous multilinear mapping. Then, the mapping $\widetilde T \colon \cE(E,F) \times \cdots \times \cE(E,F)\to \cE(E,F)$ given by
\begin{equation}
\widetilde T(R_1, \ldots, R_n)(\Phi) \coleq T(R_1(\Phi), \ldots , R_n(\Phi))
\label{definition1}
\end{equation}
commutes with the embedding $\sigma$ in the sense that
\[
\widetilde T(\sigma(\varphi_1), \ldots, \sigma(\varphi_n)) = \sigma(T(\varphi_1, \ldots, \varphi_n)).
\]
\end{lemma}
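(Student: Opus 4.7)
The plan is to verify two things: that $\widetilde T(R_1,\ldots,R_n)$ indeed lies in $\cE(E,F) = C^\infty(\Linc_b(E,F),F)$, and then to check the compatibility with $\sigma$ by a direct computation.

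First, I would observe that the diagonal assignment
\[
\Delta \colon \Linc_b(E,F) \to F^n, \qquad \Phi \mapsto (R_1(\Phi),\ldots,R_n(\Phi))
\]
is smooth in the sense of convenient calculus, since each $R_i$ is smooth and smoothness is stable under the formation of finite Cartesian products of target spaces. Next, I would invoke the standard fact from convenient calculus (see \cite{KM}) that every jointly continuous multilinear map between locally convex spaces is smooth. Applied to $T \colon F^n \to F$, this yields smoothness of $T$ as a map on the product $F^n$. Since the composition of smooth maps is smooth, the composition $T \circ \Delta$ is a well-defined element of $C^\infty(\Linc_b(E,F),F) = \cE(E,F)$, which is exactly $\widetilde T(R_1,\ldots,R_n)$ as prescribed by \eqref{definition1}. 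Multilinearity of $\widetilde T$ is immediate from that of $T$ and the pointwise definition.

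For the compatibility with $\sigma$, the computation is essentially tautological: for every $\Phi \in \Linc_b(E,F)$,
\[
\widetilde T(\sigma(\varphi_1),\ldots,\sigma(\varphi_n))(\Phi) = T(\sigma(\varphi_1)(\Phi),\ldots,\sigma(\varphi_n)(\Phi)) = T(\varphi_1,\ldots,\varphi_n),
\]
by the definition of $\sigma$, and the right-hand side equals $\sigma(T(\varphi_1,\ldots,\varphi_n))(\Phi)$, again by the definition of $\sigma$. Since this holds for all $\Phi$, the desired identity in $\cE(E,F)$ follows.

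The only non-routine point is the smoothness assertion, and its only real content is the passage from joint continuity of $T$ to its smoothness in the convenient calculus sense; everything else is a direct unwinding of the definitions. I would therefore cite \cite{KM} at that step and otherwise carry out the proof in a few lines.
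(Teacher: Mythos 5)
Your proposal is correct and is exactly the argument the paper leaves implicit: the lemma is stated without proof, the intended justification being that smoothness of $\widetilde T(R_1,\ldots,R_n)$ follows from smoothness of the $R_i$ together with the fact that a jointly continuous (hence bounded) multilinear map is smooth in the convenient sense \cite{KM}, while the compatibility with $\sigma$ is the tautological pointwise computation you give. Nothing to add.
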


\begin{corollary}
Suppose that $F$ is a locally convex algebra. Then, $\cE(E,F)$ is an algebra with multiplication given by
\begin{equation}
(R_1 \cdot R_2)(\Phi) \coleq R_1(\Phi) \cdot R_2(\Phi)
\label{definition-multiplication}
\end{equation}
and $\sigma$ is an algebra homomorphism.
\end{corollary}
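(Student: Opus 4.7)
The plan is to deduce this directly from \Cref{extension-bs}. Since $F$ is a locally convex algebra, its multiplication $T \colon F \times F \to F$, $(a,b) \mapsto a \cdot b$, is a jointly continuous bilinear map. Applying \Cref{extension-bs} with $n=2$ then yields a bilinear map $\widetilde T \colon \cE(E,F) \times \cE(E,F) \to \cE(E,F)$ given by
\[
\widetilde T(R_1, R_2)(\Phi) = T(R_1(\Phi), R_2(\Phi)) = R_1(\Phi) \cdot R_2(\Phi),
\]
which is exactly the pointwise product in \eqref{definition-multiplication}. In particular, the fact that $R_1 \cdot R_2$ lies in $\cE(E,F) = C^\infty(\Linc_b(E,F), F)$ (i.e., that pointwise multiplication preserves smoothness) is already guaranteed by \Cref{extension-bs}, so no additional work is required here.

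Next, I would verify the algebraic axioms pointwise. For each fixed $\Phi \in \Linc_b(E,F)$ the values $R_i(\Phi)$ live in $F$, so associativity, commutativity, and distributivity of the product in $\cE(E,F)$ reduce immediately to the corresponding identities in the locally convex algebra $F$. Hence $\cE(E,F)$ equipped with \eqref{definition-multiplication} is an associative commutative algebra.

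The algebra homomorphism property of $\sigma$ then follows at once from the compatibility statement in \Cref{extension-bs}: for all $\varphi_1, \varphi_2 \in F$,
\[
\sigma(\varphi_1) \cdot \sigma(\varphi_2) = \widetilde T(\sigma(\varphi_1), \sigma(\varphi_2)) = \sigma(T(\varphi_1, \varphi_2)) = \sigma(\varphi_1 \cdot \varphi_2).
\]

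There is essentially no obstacle in this argument, since \Cref{extension-bs} already takes care of the only nontrivial point, namely that the pointwise product of two smooth $F$-valued maps on $\Linc_b(E,F)$ is smooth whenever the multiplication of $F$ is jointly continuous; the remainder of the proof is purely formal bookkeeping on pointwise values.
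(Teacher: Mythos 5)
Your proposal is correct and matches the paper's intended argument: the corollary is an immediate consequence of \Cref{extension-bs} applied to the jointly continuous multiplication $T\colon F\times F\to F$, with the algebra axioms checked pointwise and the homomorphism property of $\sigma$ coming from the compatibility identity in that lemma. The paper gives no separate proof, treating it exactly as you do.
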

The second variant of extending operations to the basic space applies to isomorphisms on $E$ which restrict to isomorphisms on $F$. This will be used for isomorphisms on distribution spaces coming from diffeomorphisms of the respective domains.

\begin{lemma}\label{extension-diff}
Let $(E_1,F_1)$ and $(E_2,F_2)$ be two test pairs. Suppose that $f \colon E_1 \to E_2$ is a linear topological isomorphism such that also the restriction $f|_{F_1}$ is a linear topological isomorphism $F_1 \to F_2$. Then, the mapping $f_* \colon \cE(E_1,F_1) \to \cE(E_2,F_2)$ given by
\begin{equation}
(f_*R)(\Phi) \coleq f(R(f^{-1} \circ \Phi \circ f))
\label{definition2}
\end{equation}
is a vector space isomorphism that makes the following diagrams commutative:
 \[
 \xymatrix{
  E_1 \ar[r]^f \ar[d]_\iota & E_2 \ar[d]^\iota \\
  \cE(E_1, F_1) \ar[r]^f & \cE(E_2, F_2)
  }
  \hspace{1cm}
  \xymatrix{
  F_1 \ar[r]^f \ar[d]_\sigma & F_2 \ar[d]^\sigma \\
  \cE(E_1, F_1) \ar[r]^f & \cE(E_2, F_2)
  }
\]

\end{lemma}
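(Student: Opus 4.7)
The plan is to verify, in turn: (i) the formula \eqref{definition2} defines an element of $\cE(E_2,F_2)$ for every $R \in \cE(E_1,F_1)$; (ii) $R\mapsto f_*R$ is linear with two-sided inverse given by $(f^{-1})_*$; and (iii) the two diagrams commute.

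For (i), the key step is to observe that the linear map
\[
\Psi \colon \Linc_b(E_2,F_2) \to \Linc_b(E_1,F_1), \quad \Phi \mapsto f^{-1}\circ\Phi\circ f
\]
is well-defined and continuous. Well-definedness uses that $f^{-1}$ restricts to a continuous map $F_2 \to F_1$: then $f^{-1}\circ\Phi\circ f$ indeed takes values in $F_1$ and is a continuous linear map $E_1 \to F_1$. For continuity of $\Psi$, a defining seminorm of $\Linc_b(E_1,F_1)$ has the form $p_{B,q}(T)=\sup_{x\in B} q(T(x))$ with $B\subseteq E_1$ bounded and $q\in\csn(F_1)$; since $f(B)$ is bounded in $E_2$ and there is $q'\in\csn(F_2)$ with $q\circ f^{-1}|_{F_2}\le q'$, one obtains $p_{B,q}(\Psi(\Phi))\le p_{f(B),q'}(\Phi)$. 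By the symmetric argument, $\Psi^{-1}\colon\Phi'\mapsto f\circ\Phi'\circ f^{-1}$ is also continuous, so $\Psi$ is a linear topological isomorphism. In particular $\Psi$ is smooth (as is $f|_{F_1}\colon F_1\to F_2$, being continuous linear), and therefore $f_*R = f\circ R\circ\Psi$ is smooth by the chain rule of convenient calculus, which shows $f_*R\in \cE(E_2,F_2)$.

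Step (ii) is then essentially formal. Linearity in $R$ is immediate from \eqref{definition2}, and a direct substitution yields
\[
((f^{-1})_*(f_*R))(\Phi) = f^{-1}\bigl(f\bigl(R(f^{-1}\circ f\circ\Phi\circ f^{-1}\circ f)\bigr)\bigr) = R(\Phi),
\]
with the reverse composition treated symmetrically. For step (iii), a one-line computation suffices in each case: for $u\in E_1$, $(f_*\iota(u))(\Phi)=f\bigl((f^{-1}\circ\Phi\circ f)(u)\bigr)=\Phi(f(u))=\iota(f(u))(\Phi)$; and for $\varphi\in F_1$, $(f_*\sigma(\varphi))(\Phi)=f(\varphi)=\sigma(f(\varphi))(\Phi)$.

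The only place where the full hypothesis that $f$ and $f|_{F_1}$ are linear topological isomorphisms (rather than merely linear bijections) is used is in the continuity of $\Psi$ and $\Psi^{-1}$; the remaining formal computations would survive purely algebraic hypotheses. Consequently there is no substantial obstacle, and the most delicate point is the bookkeeping with the bounded-convergence topology, matching a bounded set and a seminorm at a time on both sides.
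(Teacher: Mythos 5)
Your proof is correct; the paper in fact states this lemma without proof, and your argument is exactly the intended routine verification: well-definedness and continuity of the conjugation $\Phi \mapsto f^{-1}\circ\Phi\circ f$ on $\Linc_b(E_2,F_2)$, smoothness of $f_*R = f|_{F_1}\circ R\circ \Psi$ via the chain rule of convenient calculus, the formal inverse $(f^{-1})_*$, and the one-line computations for $\iota$ and $\sigma$. Nothing is missing, and your closing remark correctly isolates where the topological hypotheses are actually used.
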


Finally, the third variant of extending operations to the basic space applies to the extension of derivatives to $\cE(E,F)$:
\begin{lemma}\label{extension-der} Let $T \in \Linc(E,E)$ with $T|_{F} \in \Linc(F, F)$. Then, the mapping 
\begin{gather*}
T^{\RO} \colon \Linc(E,F) \to \Linc(E,F), \\
\Phi \mapsto  T \circ \Phi - \Phi \circ T
\end{gather*}
is linear and continuous, and the mapping $\widehat T \colon \cE(E,F) \to \cE(E,F)$ given by
\begin{equation}\label{definition3}
(\widehat TR)(\Phi) \coleq T(R(\Phi)) - \ud R (\Phi) (T^{\RO}\Phi)
\end{equation}
is a well defined linear mapping that makes the following diagrams commutative:
 \[
 \xymatrix{
  E \ar[r]^T \ar[d]_\iota & E \ar[d]^\iota \\
  \cE(E, F) \ar[r]^{\widehat{T}}& \cE(E, F)
  }
  \hspace{1cm}
  \xymatrix{
  F \ar[r]^T \ar[d]_\sigma & F \ar[d]^\sigma \\
  \cE(E, F) \ar[r]^{\widehat{T}} & \cE(E, F)
  }
 \]
\end{lemma}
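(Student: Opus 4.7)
The plan is to verify three things in order: continuity and linearity of $T^{\RO}$, well-definedness and linearity of $\widehat T$, and commutativity of the two diagrams; then to observe that the correction term $-\ud R(\Phi)(T^{\RO}\Phi)$ is cooked up precisely so that the failure of $\iota$ to intertwine $T$ pointwise is cancelled, while $\sigma$'s image consists of constants and is therefore unaffected.

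For $T^{\RO}$, one first checks that the target is correct: for $\Phi\in\Linc(E,F)$, the hypothesis $T|_F\in\Linc(F,F)$ gives $T\circ\Phi\in\Linc(E,F)$, and $\Phi\circ T\in\Linc(E,F)$ is automatic since $T\in\Linc(E,E)$. Linearity in $\Phi$ is immediate, and continuity in $\Linc_b(E,F)$ follows from the fact that pre- and post-composition with a fixed continuous linear map send bounded sets to bounded sets; hence both $\Phi\mapsto T\circ\Phi$ and $\Phi\mapsto\Phi\circ T$ are continuous endomorphisms of $\Linc_b(E,F)$.

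For the well-definedness of $\widehat TR$, I would write it as the difference of two smooth maps. The first, $\Phi\mapsto T(R(\Phi))$, is the composition of the smooth map $R\colon\Linc_b(E,F)\to F$ with the continuous (hence smooth in convenient calculus) linear map $T|_F\colon F\to F$. The second, $\Phi\mapsto\ud R(\Phi)(T^{\RO}\Phi)$, is the composition of the smooth map $\Phi\mapsto(\ud R(\Phi),T^{\RO}\Phi)$ into $L(\Linc_b(E,F),F)\times\Linc_b(E,F)$ with the evaluation $(A,v)\mapsto A(v)$, which is bounded bilinear and therefore smooth in the sense of \cite{KM}. Thus $\widehat TR\in C^\infty(\Linc_b(E,F),F)=\cE(E,F)$, and linearity of $\widehat T$ in $R$ follows from linearity of $T$ and of $R\mapsto\ud R$.

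The diagrams are checked by direct computation. For $u\in E$, the map $\iota(u)\colon\Phi\mapsto\Phi(u)$ is linear in $\Phi$, so $\ud\iota(u)(\Phi)(\Psi)=\Psi(u)$; hence
\[
\widehat T\iota(u)(\Phi)=T(\Phi(u))-(T^{\RO}\Phi)(u)=T(\Phi(u))-T(\Phi(u))+\Phi(Tu)=\iota(Tu)(\Phi).
\]
For $\varphi\in F$, $\sigma(\varphi)$ is constant, so $\ud\sigma(\varphi)=0$ and $\widehat T\sigma(\varphi)(\Phi)=T\varphi=\sigma(T\varphi)(\Phi)$. I do not expect a genuine obstacle; the one subtle point is invoking the smoothness of the evaluation map on $L(\Linc_b(E,F),F)\times\Linc_b(E,F)$, which is where convenient calculus is really used and which underwrites the smoothness of the correction term.
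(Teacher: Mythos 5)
Your proof is correct. The paper states this lemma without proof (the analogous arguments appear in the cited references \cite{papernew,bigone}), and your argument is exactly the intended one: continuity of pre- and post-composition for $T^{\RO}$, smoothness of $\widehat TR$ via the chain rule together with the smoothness of the evaluation map $L(\Linc_b(E,F),F)\times\Linc_b(E,F)\to F$ in convenient calculus, and the pointwise computations $\ud\,\iota(u)(\Phi)(\Psi)=\Psi(u)$ and $\ud\,\sigma(\varphi)=0$, which make the correction term cancel $T(\Phi(u))-\Phi(Tu)$ in the $\iota$-diagram and vanish in the $\sigma$-diagram.
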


\section{The quotient construction}\label{sec_quotient}

Colombeau algebras are defined as the quotient of moderate by negligible functions, which permits the product of regular functions to be preserved. While originally these properties were determined by inserting translated and scaled test functions into the representatives of generalized functions, the functional analytic formulation of the theory makes it possible to give a very elegant formulation of this testing procedure in more general terms. Our next goal is to give a proper definition of moderateness and negligibility of elements of the basic space in our setting. We start by introducing test objects for a test pair $(E, F)$.

\begin{definition} Let $\cS = (\cA, \cI)$ be an admissible pair of scales. We define $\TO(E,F, \cS)$ as the set consisting of all $(\Phi_\e)_\e \in \Linc(E,F)^I$ that satisfy
\begin{enumerate}[label=$(\TO)_\arabic*$]
\item $\forall p \in \csn(\Linc_\sigma(E,F)) \, \exists \lambda \in \cA : p(\Phi_\e) = O(\lambda_\e)$, 
\item $\forall p \in \csn(\Linc_\sigma(F,F)) \, \forall \lambda \in \cI : p(\Phi_{\e}|_F - \id_F) = O(\lambda_\e)$,
\item $\Phi_\e \to  \id_E$ in $\Linc_\sigma(E,E)$.
\end{enumerate}
Elements of $\TO(E,F, \cS)$ are called \emph{test objects (with respect to $\cS$)}. If $\cS$ is clear from the context, we shall simply write $\TO(E,F, \cS) = \TO(E,F)$.

Similarly, we define $\TO^0(E,F) = \TO^0(E,F, \cS)$ as the set consisting of all $(\Psi_\e)_\e \in \Linc(E,F)^I$ that satisfy
\begin{enumerate}[label=$(\TO)^0_\arabic*$]
\item $\forall p \in \csn(\Linc_\sigma(E,F)) \, \exists \lambda \in \cA : p(\Psi_\e) = O(\lambda_\e)$, 
\item $\forall p \in \csn(\Linc_\sigma(F,F)) \, \forall \lambda \in \cI : p(\Psi_{\e}|_F) = O(\lambda_\e)$,
\item $\Psi_\e \to  0$ in $\Linc_\sigma(E,E)$.
\end{enumerate}
Elements of $\TO^0(E,F, \cS)$ are called \emph{$0$-test objects (with respect to $\cS$)}. Again, we write $\TO^0(E,F,\cS) = \TO^0(E,F)$ if $\cS$ is clear from the context.
\end{definition}
We shall need the following result later on.
\begin{lemma}\label{mappings-TO}
\begin{enumerate}[label=(\roman*)]
\item Let $T_i \in \Linc(E, E)$, $i = 0, \ldots, N \in \N$, be given such that $T_{i}|_F  \in \Linc(F,F)$ and $\sum_{i = 0}^N T_i = \id$. Then, $\left(\sum_{i = 0}^N T_i \circ \Phi_{i,\e}\right)_\e \in \TO(E,F)$ for all $(\Phi_{i,\e})_\e \in \TO(E,F)$, $i= 0, \ldots, N$.
\item Let $T \in \Linc(E, E)$ be such that $T|_F  \in \Linc(F,F)$. Then, $(T \circ \Phi_\e)_\e \in \TO^0(E,F)$ for all $(\Phi_\e)_\e \in \TO^0(E,F)$.
\item Let $T \in \Linc(E,E)$ with $T|_F \in \Linc(F,F)$. Then, $(T \circ \Phi_\e - \Phi_\e \circ T)_\e \in \TO^0(E,F)$ for all $(\Phi_\e)_\e \in \TO(E,F) \cup \TO^0(E,F)$.
\end{enumerate}
\end{lemma}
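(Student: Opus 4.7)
In each of (i)--(iii) I would verify the three defining conditions of $\TO$ or $\TO^0$ in order. The organising principle is that the continuous seminorms on $\Linc_\sigma(E,F)$ (and on $\Linc_\sigma(F,F)$) are generated by evaluations $p_{e,q}(\Psi) = q(\Psi(e))$, so any estimate on $T \circ \Phi$ or $\Phi \circ T$ in such a seminorm reduces to applying $T$ pointwise, either invoking the continuity of $T \colon E \to E$ (to shift the point of evaluation into the scope of $\Phi$) or the continuity of $T|_F \colon F \to F$ (to bound $q \circ T$ on $F$ by some $q' \in \csn(F)$).

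For (i), condition $(\TO)_1$ follows summand by summand via $q(T_i(x)) \leq q'(x)$ on $F$, the finitely many asymptotic scales being merged using property (i) of the growth scale. For $(\TO)_2$ the hypothesis $\sum_i T_i = \id_E$ restricts to $\sum_i T_i|_F = \id_F$, so
\[
\Bigl(\sum_{i=0}^N T_i \circ \Phi_{i,\e}\Bigr)\Big|_F - \id_F \;=\; \sum_{i=0}^N T_i|_F \circ (\Phi_{i,\e}|_F - \id_F),
\]
and each summand is $\cI$-small by $(\TO)_2$ applied to $\Phi_{i,\e}$. Condition $(\TO)_3$ is the pointwise convergence $\sum_i T_i(\Phi_{i,\e}(e) - e) \to 0$ in $E$, immediate from continuity of each $T_i$. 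Part (ii) follows exactly the same template with a single summand and no identity-subtraction.

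The genuine obstacle is part (iii), where both $T \circ \Phi_\e$ and $\Phi_\e \circ T$ must be shown to be $\cI$-small on $F$ simultaneously. The key move is the commutator identity
\[
T \circ \Phi_\e - \Phi_\e \circ T \;=\; T \circ (\Phi_\e - \id_E) - (\Phi_\e - \id_E) \circ T,
\]
valid on $E$ since $T$ commutes with $\id_E$. If $(\Phi_\e)_\e \in \TO(E,F)$, restricting to $F$ turns both pieces into compositions involving the $\cI$-small quantity $\Phi_\e|_F - \id_F$, and $(\TO)_2$ supplies the needed decay: the first piece via $q \circ T|_F \leq q'$ on $F$, the second by noting that evaluation at $Tf \in F$ produces a generating seminorm of $\Phi_\e|_F - \id_F$. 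If instead $(\Phi_\e)_\e \in \TO^0(E,F)$, no rewriting is needed and $(\TO)^0_2$ applies to both pieces directly. Conditions $(\TO)^0_1$ and $(\TO)^0_3$ in (iii) are routine: the first splits into two pieces controlled by $(\TO)_1$ or $(\TO)^0_1$ (noting $p_{e,q}(\Phi_\e \circ T) = p_{Te,q}(\Phi_\e)$), and the second uses pointwise continuity of $T$ together with $\Phi_\e(e) \to e$ (or $\to 0$). Apart from the commutator identity in the $\TO$ case, the whole argument is essentially bookkeeping.
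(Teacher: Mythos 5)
The paper states this lemma without proof, so there is no argument of the authors to compare against; your proof is correct and supplies exactly the routine verification they leave implicit. In particular, the one step that requires an actual idea --- rewriting the commutator as $T\circ(\Phi_\e-\id_E)-(\Phi_\e-\id_E)\circ T$ so that $(\TO)_2$ can be applied to both pieces when $(\Phi_\e)_\e\in\TO(E,F)$, while no rewriting is needed in the $\TO^0$ case --- is handled correctly, and the remaining conditions do reduce, as you say, to evaluation seminorms $\Phi\mapsto q(\Phi(e))$ together with the continuity of $T$ on $E$, the continuity of $T|_F$ on $F$ (which also guarantees $Tf\in F$ where $(\TO)_2$ or $(\TO)^0_2$ is invoked at the point $Tf$), and the finite-sum properties of the scales $\cA$ and $\cI$.
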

Having test objects at our disposal, we are now able to define moderateness and negligibility.
\begin{definition}
Let $\cS = (\cA, \cI)$ be an admissible pair of scales and let $\Lambda \subseteq \TO(E,F, \cS)$, $\Lambda^0 \subseteq \TO^0(E,F, \cS)$ be nonempty. An element $R \in \cE(E,F)$ is called \emph{moderate (with respect to $\Lambda$, $\Lambda^0$, and $\cS$)} if 
\begin{gather*}
\forall p \in \csn(F)\ \forall l \in \N\ \forall (\Phi_\e)_\e \in \Lambda \ \forall (\Psi_{1,\e})_\e, \ldots, (\Psi_{l,\e})_\e  \in \Lambda^0\ \exists \lambda \in \cA: \\
p(\ud^l R(\Phi_\e)(\Psi_{1, \e}, \ldots, \Psi_{l, \e}))   = O(\lambda_\e),
\end{gather*}
and \emph{negligible (with respect to $\Lambda$, $\Lambda^0$, and $\cS$)} if 
\begin{gather*}
\forall  p \in \csn(F) \ \forall l \in \N \ \forall (\Phi_\e)_\e \in \Lambda \ \forall (\Psi_{1,\e})_\e, \ldots, (\Psi_{l,\e})_\e  \in \Lambda^0\ \forall \lambda \in \cI: \\
p(\ud^l R(\Phi_\e)(\Psi_{1, \e}, \ldots, \Psi_{l, \e})) = O(\lambda_\e).
\end{gather*}
The set of all moderate (negligible, respectively) elements is denoted by $\cE_\cM(E,F) = \cE_\cM(E,F, \Lambda, \Lambda^0, \cS)$ ($\cE_\cN(E,F) = \cE_\cN(E,F, \Lambda, \Lambda^0, \cS)$, respectively). 
\end{definition}

The following important properties follow immediately from our definitions. In fact, we chose our definitions in such a way precisely for these properties to hold. 

\begin{proposition}\label{properties-embedding}
$\mbox{}$
\begin{enumerate}[label=(\roman*)]
\item $\cE_\cM(E,F)$ is a vector space and $\cE_\cN(E,F)$ is a subspace of $\cE_\cM(E,F)$,
\item $\iota(E) \subseteq \cE_\cM(E,F)$, $\sigma(F) \subseteq \cE_\cM(E,F)$,
\item $\iota(E) \cap \cE_\cN(E,F) = \{0\}$, $\sigma(F) \cap \cE_\cN(E,F) = \{0\}$,
\item $(\iota - \sigma)(F) \subseteq \cE_\cN(E,F)$.
\end{enumerate}
\end{proposition}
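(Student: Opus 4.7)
The plan is to verify each of the four properties by direct unfolding of the definitions, leveraging only the algebraic closure properties of the scales $\cA$ and $\cI$, the asymptotics encoded in $(\TO)_1$--$(\TO)_3$ and their $0$-versions, and the elementary structure of $\iota$ and $\sigma$ as functions of $\Phi$.

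For (i), given $R_1,R_2\in\cE_\cM(E,F)$ and fixed $p$, $l$, $(\Phi_\e)_\e\in\Lambda$, $(\Psi_{i,\e})_\e\in\Lambda^0$, the moderate estimate for $R_1+R_2$ follows from the linearity of $\ud^l$ in $R$, the sublinearity of $p$, and property (i) of $\cA$ which absorbs $\lambda_{1,\e}+\lambda_{2,\e}$ into a single $\nu\in\cA$. The subspace statement for $\cE_\cN$ is symmetric, using property (iv) of $\cI$. The inclusion $\cE_\cN\subseteq\cE_\cM$ uses the admissibility clause that furnishes $\lambda\in\cA$ and $\mu\in\cI$ with $\mu_\e=O(\lambda_\e)$: apply negligibility to this specific $\mu$. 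For (ii), $\iota(u)$ is continuous and linear in $\Phi$, hence smooth in the sense of convenient calculus with $\ud^0\iota(u)(\Phi)=\Phi(u)$, $\ud\iota(u)(\Phi)(\Psi)=\Psi(u)$, and $\ud^l\iota(u)=0$ for $l\ge 2$; since $\Phi\mapsto p(\Phi(u))$ is a continuous seminorm on $\Linc_\sigma(E,F)$, $(\TO)_1$ and $(\TO)^0_1$ supply the required $\cA$-bounds. For $\sigma(\varphi)$ only $l=0$ contributes, with value $p(\varphi)$, which is $O(\lambda_\e)$ for the $\lambda\in\cA$ provided by property (iii) of $\cA$.

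For (iii), if $\iota(u)\in\cE_\cN$, pick $\lambda\in\cI$ with $\lambda_\e\to 0$ from property (vi) of $\cI$; negligibility gives $p(\Phi_\e(u))=O(\lambda_\e)\to 0$ for every $p\in\csn(F)$, so $\Phi_\e(u)\to 0$ in $F$. Because the inclusion $F\hookrightarrow E$ is continuous (by definition of a test pair), the same convergence holds in $E$ and hence weakly. On the other hand $(\TO)_3$ yields $\Phi_\e(u)\to u$ weakly in $E$, and uniqueness of the weak limit forces $u=0$. For $\sigma(\varphi)\in\cE_\cN$, the same choice of $\lambda$ gives $p(\varphi)=0$ for every $p\in\csn(F)$, hence $\varphi=0$. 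For (iv), write $(\iota-\sigma)(\varphi)(\Phi)=(\Phi|_F-\id_F)(\varphi)$; the $l=0$ derivative is handled by $(\TO)_2$ applied to the continuous seminorm $\Phi\mapsto p(\Phi(\varphi))$ on $\Linc_\sigma(F,F)$, the $l=1$ derivative $\Psi_\e(\varphi)$ by the analogous $(\TO)^0_2$, and higher derivatives vanish.

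The only step that is more than bookkeeping is the conclusion $u=0$ in (iii): it requires reconciling the $F$-convergence extracted from negligibility with the $E$-weak convergence furnished by $(\TO)_3$, which is possible precisely because the inclusion $F\hookrightarrow E$ is continuous—this is exactly the defining condition of a test pair. All other parts reduce to routine asymptotic arithmetic and continuity of evaluation seminorms.
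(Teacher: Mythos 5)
Your proof is correct and is exactly the direct definition-unfolding argument the paper has in mind (the paper states that these properties ``follow immediately from our definitions'' and omits the details). In particular, your handling of the only non-routine point --- reconciling the $F$-convergence $\Phi_\e(u)\to 0$ obtained from negligibility with the convergence $\Phi_\e(u)\to u$ from $(\TO)_3$ via continuity of the inclusion $F\hookrightarrow E$ and uniqueness of limits --- is the intended use of the test-pair hypothesis.
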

We now construct the quotient.
\begin{definition}\label{quotient-global}
Let $\cS = (\cA, \cI)$ be an admissible pair of scales and let $\Lambda \subseteq \TO(E,F, \cS)$, $\Lambda^0 \subseteq \TO^0(E,F, \cS)$ be nonempty. 
The \emph{nonlinear extension of the test pair $(E,F)$ (with respect to $\Lambda$, $\Lambda^0$, and $\cS$)} is defined as 
\[ \cG(E,F) = \cG(E,F, \Lambda, \Lambda^0, \cS) \coleq \cE_\cM(E,F, \Lambda, \Lambda_0, \cS) / \cE_\cN(E,F, \Lambda, \Lambda_0, \cS). \]
The equivalence class of $R \in \cE_{\cM}(E,F)$ is denoted by $[R]$.
\end{definition} 
\Cref{properties-embedding} implies that 
\begin{gather*}
\iota \colon E \to \cG(E,F),\quad \iota(u) \coleq [\iota(u)], \\
\sigma \colon E \to \cG(E,F),\quad \sigma(\varphi) \coleq [\sigma(\varphi)]
\end{gather*}
are linear embeddings such that $\iota|_F = \sigma$. 
The name "nonlinear extension" is justified by the following lemma. 
\begin{lemma}\label{extension-mapping-global}
Let $T \colon F \times \cdots \times F \to F$ be a jointly continuous multilinear mapping and consider the multilinear mapping $\widetilde T \colon \cE(E,F) \times \cdots \times \cE(E,F)\to \cE(E,F)$
given by \eqref{definition1}. Then, $\widetilde T$ preserves moderateness, i.e., $\widetilde T(\cE_{\cM}(E,F), \ldots, \cE_{\cM}(E,F)) \subseteq  \cE_{\cM}(E,F)$, and $\widetilde T(R_1, \ldots, R_n)$ is negligible if at least one of the $R_i$ is negligible. Consequently,
\begin{gather*}
\widetilde T \colon  \cG(E,F) \times \ldots \times \cG(E,F) \to \cG(E,F) \\
\widetilde T([R_1], \ldots, [R_n]) \coleq [T(R_1, \ldots, R_n)]
\end{gather*}
is a well-defined multilinear mapping such that
\[ \widetilde T(\sigma(\varphi_1), \ldots, \sigma(\varphi_n)) = \sigma(T(\varphi_1, \ldots, \varphi_n)). \]
\end{lemma}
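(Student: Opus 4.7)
The plan is to reduce the entire statement to a Leibniz-type formula for the differentials of $\widetilde T(R_1,\ldots,R_n)$ combined with the joint continuity of $T$ and the axioms governing the scales $\cA$ and $\cI$. First I would record that since $T$ is continuous and multilinear it is smooth in the sense of convenient calculus, so $\widetilde T(R_1,\ldots,R_n) = T \circ (R_1,\ldots,R_n)$ is smooth and fits in $\cE(E,F)$. An induction on $l$ then yields the identity
\[
\ud^l \widetilde T(R_1, \ldots, R_n)(\Phi)(\Psi_1, \ldots, \Psi_l)
= \sum_{\substack{I_1 \sqcup \cdots \sqcup I_n \\ = \{1, \ldots, l\}}} T\bigl(\ud^{|I_1|} R_1(\Phi)(\Psi_{I_1}), \ldots, \ud^{|I_n|} R_n(\Phi)(\Psi_{I_n})\bigr),
\]
where $\Psi_{I_j}$ denotes the subtuple of $(\Psi_1,\ldots,\Psi_l)$ indexed by $I_j$ in the induced order. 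This multilinear Leibniz identity is the only nontrivial algebraic input.

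Next I would decouple the seminorm estimates by invoking joint continuity: for each $p \in \csn(F)$ there exists $q \in \csn(F)$ with $p(T(\varphi_1, \ldots, \varphi_n)) \le q(\varphi_1) \cdots q(\varphi_n)$ for all $\varphi_j \in F$. Substituting $\Phi = \Phi_\e$ and $\Psi_i = \Psi_{i,\e}$ from the given test and $0$-test objects bounds each summand of the Leibniz formula by a product $\prod_{j=1}^n q\bigl(\ud^{|I_j|} R_j(\Phi_\e)(\Psi_{I_j,\e})\bigr)$. For the moderateness claim, moderateness of each $R_j$ provides $\lambda_j \in \cA$ dominating the $j$-th factor; closure of $\cA$ under finite products and sums (axioms (i)--(ii)) then combines the $n$ factors and the finite sum over partitions into a single $\nu \in \cA$.

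For the negligibility claim, assume that some $R_{j_0}$ is negligible while the others lie in $\cE_{\cM}(E,F)$. Fix $\lambda \in \cI$. In each summand I apply moderateness to the $n-1$ factors with $j \neq j_0$, producing via axiom (ii) of $\cA$ a single $\mu \in \cA$ bounding their product. Axiom (vii) of admissibility furnishes $\nu \in \cI$ with $\mu_\e \nu_\e = O(\lambda_\e)$; applying negligibility of $R_{j_0}$ with this $\nu$ bounds the remaining factor and hence the full summand by $O(\lambda_\e)$. Summing the finitely many partitions (axiom (i) of $\cI$) gives the required $O(\lambda_\e)$ estimate for $p(\ud^l \widetilde T(R_1,\ldots,R_n)(\Phi_\e)(\Psi_{1,\e},\ldots,\Psi_{l,\e}))$.

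Finally, well-definedness of the induced map on the quotient follows from these two preservation statements by standard multilinear algebra: if $R_j' - R_j$ is negligible for each $j$, then $\widetilde T(R_1',\ldots,R_n') - \widetilde T(R_1,\ldots,R_n)$ is a finite sum of terms each of which is $\widetilde T$ applied to an $n$-tuple containing at least one negligible entry, and hence is negligible. The commutation with $\sigma$ at the level of representatives was already established in \Cref{extension-bs} and descends immediately to classes. I expect the main obstacle to be purely notational: tracking the partition indices and the chain of seminorm/scale estimates neatly. The scale axioms (i), (ii), (vii) have been set up precisely to make the assembly of bounds go through, so no genuinely new analytic idea is needed beyond the Leibniz formula and joint continuity of $T$.
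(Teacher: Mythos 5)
Your argument is correct and follows the same route the paper intends: the paper's proof is the one-liner ``this follows from \Cref{extension-bs} and the continuity of $T$,'' and your write-up simply supplies the details that are left implicit there (the partition/Leibniz formula for $\ud^l\widetilde T$, the seminorm estimate from joint continuity, and the assembly via the scale axioms, with axiom (vii) doing the work for negligibility). The only quibble is cosmetic: summing finitely many $O(\lambda_\e)$ terms needs no scale axiom at all, so the reference to ``axiom (i) of $\cI$'' at that point is superfluous.
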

\begin{proof}
This follows from \Cref{extension-bs} and the continuity of $T$.
\end{proof}
\begin{corollary}
Suppose that $F$ is a locally convex algebra. Then, $\cE_{M}(E,F)$ is an algebra with multiplication given by \eqref{definition-multiplication} and $\cE_\cN(E,F)$ is an ideal of $\cE_{M}(E,F)$.
Consequently, $\cG(E,F)$ is an algebra with multiplication given by
\[ [R_1] \cdot [R_2] \coleq [R_1 \cdot R_2] \]
and $\sigma$ is an algebra homomorphism.
\end{corollary}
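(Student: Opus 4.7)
The plan is to apply \Cref{extension-mapping-global} to the multiplication map of $F$ itself. Since $F$ is assumed to be a locally convex algebra, its multiplication $T\colon F \times F \to F$, $(u,v) \mapsto u\cdot v$, is a jointly continuous bilinear mapping. Then the associated $\widetilde T$ on $\cE(E,F) \times \cE(E,F)$ defined by \eqref{definition1} is precisely the pointwise product \eqref{definition-multiplication}, so \Cref{extension-bs} shows that this product indeed takes values in $\cE(E,F) = C^\infty(\Linc_b(E,F), F)$, and associativity and commutativity are inherited from $F$ by evaluating at each $\Phi \in \Linc_b(E,F)$.

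First I would verify that $\cE_\cM(E,F)$ is a subalgebra: this is the moderateness clause of \Cref{extension-mapping-global} with $n=2$, applied to the jointly continuous multiplication of $F$. Next, the ideal property of $\cE_\cN(E,F)$ is the negligibility clause of the same lemma, which asserts that $\widetilde T(R_1, R_2)$ is negligible as soon as one of the $R_i$ is, so products of moderate with negligible elements are negligible. Combined with the fact (already recorded in \Cref{properties-embedding}) that $\cE_\cN(E,F)$ is a linear subspace of $\cE_\cM(E,F)$, this gives that $\cE_\cN(E,F)$ is a two-sided (in this commutative setting, simply an) ideal of $\cE_\cM(E,F)$.

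Once the ideal property is established, the quotient $\cG(E,F) = \cE_\cM / \cE_\cN$ inherits an associative commutative algebra structure in the standard way, so $[R_1]\cdot[R_2] \coleq [R_1 \cdot R_2]$ is well defined. Finally, the identity
\[
\widetilde T(\sigma(\varphi_1), \sigma(\varphi_2)) = \sigma(T(\varphi_1, \varphi_2))
\]
already valid in $\cE(E,F)$ by \Cref{extension-bs}, descends to $\cG(E,F)$, yielding $\sigma(\varphi_1)\cdot \sigma(\varphi_2) = \sigma(\varphi_1 \cdot \varphi_2)$ in the quotient and thus showing $\sigma$ is an algebra homomorphism. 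There is no real obstacle here; the whole statement is a direct corollary of \Cref{extension-mapping-global}, the only conceptual point being that "locally convex algebra" was defined precisely so that multiplication qualifies as a jointly continuous multilinear map and the lemma applies.
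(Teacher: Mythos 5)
Your proof is correct and follows exactly the route the paper intends: the corollary is stated as an immediate consequence of \Cref{extension-mapping-global} applied with $n=2$ to the jointly continuous multiplication of the locally convex algebra $F$, which is precisely what you do. The paper gives no further argument, so nothing is missing.
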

\begin{lemma}\label{diff-global}
Let $(E_1,F_1)$ and $(E_2,F_2)$ be two test pairs. Suppose that $f \colon E_1 \to E_2$ is a linear topological isomorphism such that also the restriction $f|_{F_1}$ is a linear topological isomorphism $F_1 \to F_2$. Let $\cS = (\cA, \cI)$ be an admissible pair of scales and let $\Lambda_i \subseteq \TO(E_i,F_i, \cS)$, $\Lambda_i^0 \subseteq \TO^0(E_i,F_i, \cS)$ be nonempty for $i = 1,2$ such that
\begin{gather*}
(f^{-1} \circ \Phi_\e \circ f)_\e \in \Lambda_1 \qquad \forall (\Phi_\e)_\e \in \Lambda_2, \\
(f^{-1} \circ \Psi_\e \circ f)_\e \in \Lambda^0_1 \qquad \forall (\Psi_\e)_\e \in \Lambda_2^0 \\
\intertext{and}
(f \circ \Phi_\e \circ f^{-1})_\e \in \Lambda_2 \qquad \forall (\Phi_\e)_\e \in \Lambda_1,\\
(f \circ \Psi_\e \circ f^{-1})_\e \in \Lambda^0_2 \qquad \forall (\Psi_\e)_\e \in \Lambda_1^0.
\end{gather*}
Consider the mapping $f_* \colon \cE(E_1,F_1) \to \cE(E_2,F_2)$ given by \eqref{definition2}. Set
\[
\cE_\cM(E_i,F_i) =\cE_\cM(E_i,F_i, \Lambda_i, \Lambda^0_i, \cS), \qquad \cE_\cN(E_i,F_i) =\cE_\cN(E_i,F_i, \Lambda_i, \Lambda^0_i, \cS), \qquad
\]
for $i = 1,2$. Then, $f_*$ preserves moderateness and neglibility. 
Consequently, the mapping $f_* \colon \cG(E_1,F_1) \to \cG(E_2,F_2)$
given by 
\[ f_*([R]) \coleq [f_*(R)] \]  
is an isomorphism that makes the following diagram commutative.
 \[
 \xymatrix{
  E_1 \ar[r]^f \ar[d]_\iota & E_2 \ar[d]^\iota \\
  \cG(E_1, E_2) \ar[r]^f & \cG(E_2, F_2)
  }
 \]
\end{lemma}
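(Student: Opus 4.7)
The plan is to establish the lemma in two stages: first, that the basic-space isomorphism $f_* \colon \cE(E_1,F_1) \to \cE(E_2,F_2)$ of \Cref{extension-diff} sends $\cE_\cM(E_1,F_1)$ into $\cE_\cM(E_2,F_2)$ and $\cE_\cN(E_1,F_1)$ into $\cE_\cN(E_2,F_2)$; and second, that the quotient map is an isomorphism making the diagram commute. Since the bijectivity of $f_*$ on basic spaces is already in \Cref{extension-diff} with explicit inverse $(f^{-1})_*$, and the diagram commutes at the basic-space level, the second stage is automatic once the first is done, provided we apply the first stage simultaneously to $f$ and to $f^{-1}$ using the symmetric hypotheses on the $\Lambda_i$ and $\Lambda_i^0$.

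The key computation is a formula for the higher differentials of $f_*R$. Introduce the auxiliary map
\[
g \colon \Linc_b(E_2,F_2) \to \Linc_b(E_1,F_1), \qquad g(\Phi) \coleq f^{-1} \circ \Phi \circ f,
\]
which is a continuous linear isomorphism (it is the composition of left-multiplication by $f^{-1}$ and right-multiplication by $f$, both continuous between these spaces of operators). Then $(f_*R)(\Phi) = f(R(g(\Phi)))$, i.e., $f_*R = f \circ R \circ g$. Since $f$ and $g$ are linear and continuous, the chain rule of convenient calculus gives
\[
\ud^l(f_*R)(\Phi)(\Psi_1,\ldots,\Psi_l) = f\bigl(\ud^l R(g(\Phi))(g(\Psi_1),\ldots,g(\Psi_l))\bigr)
\]
for every $l \in \N$, $\Phi \in \Linc_b(E_2,F_2)$, and $\Psi_1,\ldots,\Psi_l \in \Linc_b(E_2,F_2)$.

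From here the moderateness/negligibility transfer is straightforward. Given $p \in \csn(F_2)$, the continuity of $f|_{F_1} \colon F_1 \to F_2$ yields $q \in \csn(F_1)$ with $p(f(\varphi)) \leq q(\varphi)$ for all $\varphi \in F_1$. For any $(\Phi_\e)_\e \in \Lambda_2$ and $(\Psi_{i,\e})_\e \in \Lambda^0_2$, the hypotheses guarantee $(g(\Phi_\e))_\e \in \Lambda_1$ and $(g(\Psi_{i,\e}))_\e \in \Lambda^0_1$. Hence if $R \in \cE_\cM(E_1,F_1)$, applying moderateness of $R$ at the seminorm $q$ with these test objects produces $\lambda \in \cA$ such that $q(\ud^l R(g(\Phi_\e))(g(\Psi_{1,\e}),\ldots,g(\Psi_{l,\e}))) = O(\lambda_\e)$, and the displayed formula then gives the same bound for $p(\ud^l(f_*R)(\Phi_\e)(\Psi_{1,\e},\ldots,\Psi_{l,\e}))$. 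The argument for negligibility is identical, with $\lambda \in \cI$ arbitrary. Running the same argument with $f^{-1}$ in place of $f$ shows that $(f^{-1})_*$ also preserves moderateness and negligibility, so the induced map on the quotients has a two-sided inverse; commutativity with $\iota$ descends from \Cref{extension-diff}.

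The only delicate point is writing the chain rule correctly for the higher differentials: one must use that $g$, being linear continuous, has trivial second- and higher-order derivatives, so all the derivatives coming from differentiating $g$ in the chain rule vanish, leaving only the fully multilinear term $f \circ \ud^l R(g(\Phi)) \circ (g \times \cdots \times g)$. Modulo this bookkeeping, the rest is a direct seminorm estimate, and no deeper structural argument is needed.
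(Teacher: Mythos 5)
Your proof is correct and follows the same route the paper intends: the paper's own proof is the single line ``This follows from \Cref{extension-diff} and the continuity of $f$,'' and your argument simply supplies the details it leaves implicit --- the chain-rule identity $\ud^l(f_*R)(\Phi)(\Psi_1,\ldots,\Psi_l)=f(\ud^l R(g\Phi)(g\Psi_1,\ldots,g\Psi_l))$ for the linear continuous maps $f$ and $g(\Phi)=f^{-1}\circ\Phi\circ f$, the seminorm transfer via continuity of $f|_{F_1}$, the role of the compatibility hypotheses on $\Lambda_i$, $\Lambda_i^0$, and the symmetric argument for $(f^{-1})_*$ to obtain the inverse on the quotients. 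No gaps.
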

\begin{proof}
This follows from \Cref{extension-diff} and the continuity of $f$.
\end{proof}
\begin{lemma}\label{fasdf}
Let $T \in \Linc(E, E)$ with $T|_F \in \Linc(F, F)$. Consider the mapping $\widehat T \colon \cE(E,F) \to \cE(E,F)$ given by \eqref{definition3}. Then, $\widehat T$ preserves moderateness and negligibility. Consequently, the mapping $\widehat{T} \colon \cG(E,F) \to \cG(E,F)$
given by 
\[ \widehat{T}([R]) \coleq [\widehat{T}(R)] \]  
is a well-defined linear mapping that makes the following diagram commutative:
 \[
 \xymatrix{
  E \ar[r]^T \ar[d]_\iota & E \ar[d]^\iota \\
  \cG(E, F) \ar[r]^{\widehat{T}} & \cG(E, F)
  }
 \]
\end{lemma}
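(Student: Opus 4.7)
The plan is to reduce the entire lemma to a single explicit formula for $\ud^l(\widehat T R)$ obtained by the product and chain rules, bound each of the resulting terms using continuity of $T|_F$ together with moderateness/negligibility of $R$, and finally invoke \Cref{extension-der} for the commutative diagram.

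Setting $g(\Phi)\coleq T^{\RO}\Phi = T\circ\Phi - \Phi\circ T$, which by \Cref{extension-der} is linear and continuous in $\Phi$, the definition \eqref{definition3} expresses $\widehat T R$ as the difference of $\Phi\mapsto T(R(\Phi))$ and $\Phi\mapsto\ud R(\Phi)(g(\Phi))$. A short induction on $l$, using linearity of both $T$ and $g$, yields
\begin{align*}
\ud^l(\widehat T R)(\Phi)(\Psi_1,\dots,\Psi_l) &= T\bigl(\ud^l R(\Phi)(\Psi_1,\dots,\Psi_l)\bigr) - \ud^{l+1}R(\Phi)(\Psi_1,\dots,\Psi_l,g(\Phi)) \\
&\quad - \sum_{i=1}^{l} \ud^l R(\Phi)(\Psi_1,\dots,g(\Psi_i),\dots,\Psi_l).
\end{align*}

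Evaluating at $\Phi = \Phi_\e$ for $(\Phi_\e)_\e \in \Lambda$ and $\Psi_i = \Psi_{i,\e}$ for $(\Psi_{i,\e})_\e \in \Lambda^0$, I would estimate each summand in an arbitrary $p \in \csn(F)$. The first summand is controlled via $p(T(x)) \le q(x)$ for some $q \in \csn(F)$ (continuity of $T|_F$) combined with moderateness/negligibility of $R$ at level $l$. For the remaining summands, \Cref{mappings-TO}(iii) guarantees that $(g(\Phi_\e))_\e$ and $(g(\Psi_{i,\e}))_\e$ lie in $\TO^0(E,F)$; applying the defining estimates of $R$ at level $l+1$ (resp.\ $l$) then produces the required $O(\lambda_\e)$ bounds for some $\lambda\in\cA$ (moderateness) or every $\lambda\in\cI$ (negligibility). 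This step rests on the tacit assumption that $\Lambda^0$ is closed under $\Phi\mapsto T^{\RO}\Phi$ applied to elements of $\Lambda\cup\Lambda^0$, a hypothesis satisfied automatically when $\Lambda^0 = \TO^0(E,F)$.

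Once preservation of both moderateness and negligibility is established, the quotient map $\widehat T\colon \cG(E,F)\to\cG(E,F)$, $[R]\mapsto[\widehat T R]$, is well defined and $\C$-linear. The commutativity of the displayed diagram is inherited directly from the corresponding statement at the basic-space level in \Cref{extension-der} by passing to equivalence classes. The main obstacle is the inductive computation of $\ud^l(\widehat T R)$ — once this identity is written out, the three estimates drop onto the defining conditions of $\cE_\cM$ and $\cE_\cN$ almost mechanically.
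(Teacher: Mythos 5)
Your proof is correct and supplies exactly the details behind the paper's one-line proof (``This follows from \Cref{extension-der} and continuity of $T$''): the inductive chain/product-rule formula for $\ud^l(\widehat T R)$, the seminorm estimate from continuity of $T|_F$, and \Cref{mappings-TO}(iii) to place $(T^{\RO}\Phi_\e)_\e$ and $(T^{\RO}\Psi_{i,\e})_\e$ in $\TO^0(E,F)$. Your remark that one tacitly needs $\Lambda^0$ to be stable under $T^{\RO}$ applied to $\Lambda\cup\Lambda^0$ --- automatic when $\Lambda^0=\TO^0(E,F)$, and supplied in the sheaf setting by \Cref{morphisms-TO}(iii) --- correctly identifies a hypothesis the paper leaves implicit.
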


\begin{proof}
This follows from \Cref{extension-der} and continuity of $T$.
\end{proof}

\section{Sheaf properties}\label{local-theory}

In this section we study the sheaf theoretic properties of our generalized function spaces. After introducing the necessary terminology, we first look in detail at test objects. Satisfying a certain localizability condition, the spaces of test objects and 0-test objects themselves form sheaves. This is used for showing the existence of global test objects by gluing together local ones, and for extending and restricting test objects in the proof of the sheaf property of the Colombeau quotient.

\subsection{Locally convex sheaves}
Let $X$ be a Hausdorff locally compact paracompact topological space. For open subsets $V,U \subseteq X$ we write $V \Subset U$ to indicate that $\overline{V} \subset U$ and $V$ is relatively compact in $X$. We shall only use this notation for \emph{open} sets.

A \emph{presheaf (of vector spaces) $E$} assigns to each open set $U \subseteq X$ a vector space $E(U)$ and gives, for every inclusion of open sets $V \subseteq U$, a linear mapping $\rho_{V,U} \colon E(U) \to E(V)$ such that for all $W \subseteq V \subseteq U$ the identities $\rho_{W,U} = \rho_{W,V} \circ \rho_{V,U}$ and $\rho_{U,U} = \id$ hold. The elements of $E(U)$ are called sections of $E$ over $U$ and the mappings $\rho_{V,U}$ restriction mappings. 

A presheaf $E$ is a \emph{sheaf} if for all open subsets $U \subseteq X$ and all open coverings $(U_i)_i$ of $U$ the following properties are satisfied: 
\begin{enumerate}[label=(S\arabic*)]
\item\label{S1} If $u \in E(U)$ satisfies $\rho_{U_i, U}(u) = 0$ for all $i$ then $u=0$.
\item\label{S2} If $u_i \in E(U_i)$ are given such that $\rho_{U_i \cap U_j, U_i}(u_i) = \rho_{U_i \cap U_j, U_j}(u_j)$ for all $i,j$ then there exists $u \in E(U)$ such that $\rho_{U_i,U}(u) = u_i$ for all 
$i$.
\end{enumerate}
A section $u \in E(U)$ is said to vanish on an open set $V \subseteq U$ if $\rho_{V,U}(u)= 0$. The support of $u$, denoted by $\supp u$, is defined as the complement in $U$ of the union of all open sets on which $u$ vanishes. The restriction of the sheaf $E$ to an open set $U \subseteq X$ is denoted by $E|_U$.

A \emph{locally convex sheaf $E$} is a sheaf $E$ such that $E(U)$ is a locally convex space for each open set $U \subseteq X$, the restriction mappings are continuous, and for all open sets $U \subseteq X$ and all open coverings $(U_i)_i$ of $U$ the following property is satisfied:
\begin{enumerate}[label=(S\arabic*),resume]
\item\label{S3} the topology on $E(U)$ coincides with the projective topology on $E(U)$ with respect to the mappings $\rho_{U_i, U}$.
\end{enumerate}
Property \ref{S3} and the fact that $X$ is locally compact imply the canonical isomorphism of locally convex spaces
\begin{equation}
E(U) \cong \varprojlim_{W \Subset U} E(W),
\label{S3iso}
\end{equation}
where the projective limit is taken with respect to the restriction mappings. Notice that the algebraic isomorphism in \eqref{S3iso} holds because of \ref{S1} and \ref{S2}. 

Let $E_1$ and $E_2$ be (locally convex) sheaves. A sheaf morphism $\mu\colon E_1 \to E_2$ consists of (continuous) linear mappings $\mu_U\colon E_1(U) \to E_2(U)$ for each open set $U \subseteq X$ such that, for every inclusion of open sets $V \subseteq U$, the identity $\rho_{V,U} \circ \mu_U = \mu_V \circ \rho_{V,U}$ holds. The set of all sheaf morphisms from $E_1$ into $E_2$ is denoted by $\Hom(E_1, E_2)$.  The assignment $U \to \Hom(E_1|_U, E_2|_U)$ together with the canonical restriction mappings is a sheaf. By abuse of notation we shall also denote this sheaf by $\Hom(E_1, E_2)$. More generally, let $E_1, \ldots, E_n,E$ be (locally convex) sheaves on  $X$. A multilinear sheaf morphism $T\colon E_1 \times \cdots \times E_n \to E$ consists of (jointly continuous) multilinear mappings $T_U \colon E_1(U) \times \cdots \times E_n(U) \to E(U)$ for each open set $U \subseteq X$ such that, for every inclusion of open sets $V \subseteq U$, we have
\[ \rho_{V,U} ( T_U(u_1, \ldots, u_n)) = T_V (\rho_{V,U}(u_1), \ldots, \rho_{V,U}(u_n)) \]
if $u_i \in E_i(U)$ for $i = 1, \ldots, n$.

A (locally convex) sheaf $E$ is called a \emph{(locally convex) sheaf of algebras} if for each open set $U \subseteq X$ the space $E(U)$ is a (locally convex) algebra and the multiplication is a bilinear sheaf morphism.

A (locally convex) sheaf $E$ is called \emph{fine} if for all closed subsets $A,B$ of $X$ with $A \cap B = \emptyset$ there is $\mu \in \Hom(E, E)$ and open neighbourhoods $U$ and $V$ of $A$ and $B$, respectively, such that 
$ \mu_U = \id$ and $\mu_V = 0$. Or, equivalently, if  for every open covering $(U_i)_{i}$ of $X$ there is a family $(\eta^{i})_{i} \subset \Hom(E, E)$ such that the family of supports of the $\eta^i$ is locally finite, $\supp \eta^{i} \subseteq U_i$ for all $i$, and $\sum_{i} \eta^{i} = \id$. The family $(\eta^{i})_{i}$ is called a partition of unity subordinate to the covering $(U_i)_i$. We shall often use the following extension principle for (locally convex) fine sheaves $E$: Let $U,V,W$ be open subsets of $X$ such that $\overline{W} \subset V \subseteq U$. Then, there is a (continuous) linear mapping $\tau \colon E(V) \to E(U)$ such that $\rho_{W,V} = \rho_{W, U} \circ \tau$. 

\subsection{Localizing regularization operators} Let $X$ be a Hausdorff locally compact paracompact topological space and $E$ and $F$ locally convex sheaves. We call $(E,F)$ a \emph{test pair of sheaves} if the following three properties are satisfied:
\begin{enumerate}[label=(\roman*)]
\item\label{TP1} $F$ is a subsheaf of $E$.
\item\label{TP2} $(E(U),F(U))$ is a test pair for each open set $U \subseteq X$. 
\end{enumerate}
Given a sheaf morphism $\mu \in \Hom(E,E)$ we write $\mu|_F$ for its restriction to $F$. Hence $\mu|_F \in \Hom(F,F)$ means that $\mu_U|_{F(U)}$ is a continuous linear operator from $F(U)$ into itself for each open set $U \subseteq X$.  The third property can then be formulated as follows: 
\begin{enumerate}[label=(\roman*),resume]
\item\label{TP3} For all open sets $U \subseteq X$ and all closed subsets $A,B$ of $U$ with $A \cap B = \emptyset$ there is $\mu \in \Hom(E|_U, E|_U)$ with $\mu|_F$ $\in \Hom(F|_U,F|_U)$ such that 
$ \mu_V = \id$ and $\mu_W = 0$ for some open neighbourhoods $V$ and $W$ (in $U$) of $A$ and $B$, respectively. Or, equivalently, to the fact that for any open set $U$ of $X$ and any open covering $(U_i)_{i}$ of $U$ there is a partition of unity $(\eta^{i})_{i} \subset \Hom(E|_U, E|_U)$ subordinate to $(U_i)_i$ such that $\eta^{i}|_{F|_U} \in \Hom(F|_U, F|_U)$ for all $i$.
\end{enumerate}
In particular, property \ref{TP3} implies that $E|_U$ and $F|_U$ are fine sheaves for all open sets $U \subseteq X$.  Moreover, it implies that for all  open subsets $U,V,W$ of $X$ with $\overline{W} \subset V \subseteq U$ there is $\tau \in \Linc(E(V), E(U))$ such that $\rho_{W,V} = \rho_{W, U} \circ \tau$ and $\tau|_{F(V)} \in \Linc(F(V), F(U))$ .

Since $F$ is a subsheaf of $E$, there is no need to make a distinction between the restriction mappings on $E$ and $F$, respectively. These mappings will be denoted by $\rho_{U,V}$.
Furthermore, we introduce the shorthand notation $\RO(U) = \Linc(E(U),F(U))$, where $\RO$ stands for ``regularization operator''.

\begin{definition}
Let $U \subseteq X$ be open. An element $(\Phi_\e)_\e \in \RO(U)^I$ is called \emph{localizing} if 
\begin{gather*}
(\forall V, V_0 \subseteq X: V \Subset V_0 \Subset U)\ (\exists \e_0 \in I)\ (\forall \e<\e_0)\ (\forall u \in E(U)) \\
(\rho_{V_0, U}(u) = 0 \Rightarrow \rho_{V, U}(\Phi_\e(u)) = 0).
\end{gather*}
We write $\ROloc(U)$ for the set of all localizing elements in $\RO(U)^I$. Furthermore, we define 
\begin{gather*}
\TOloc(U) = \TOloc(U, \cS) \coleq \TO(E(U),F(U), \cS) \cap \ROloc(U) \\
\TOzloc(U) = \TOzloc(U, \cS) \coleq \TO^0(E(U),F(U), \cS) \cap \ROloc(U),
\end{gather*}
where $\cS$ is an admissible pair of scales. 
\end{definition}
\begin{remark}
Throughout this subsection we shall always assume that the space $\TOloc(U)$ is nonempty.  
\end{remark}
\begin{definition}
Let $U \subseteq X$ be open. We define $\NO(U)$ as the vector space consisting of all $(\Phi_\e)_\e \in \RO(U)^I$ such that for all $V \Subset U$ we have $\rho_{V,U} \circ \Phi_\e = 0$ for $\e$ small enough. Define 
\[
\tROloc(U) \coleq \ROloc(U) \slash \NO(U), \qquad \tTOzloc(U) \coleq \TOzloc(U) \slash \NO(U).
\]
 For $(\Phi_\e)_\e, (\Phi'_\e)_\e \in \RO(U)^I$ we write $(\Phi_\e)_\e \sim (\Phi'_\e)_\e$ if $(\Phi_\e - \Phi'_\e)_\e \in \NO(U)$.
Set \[ \tTOloc(U) \coleq \TOloc(U) \slash{\sim}. \]
\end{definition}
The main goal of this section is to show that one can define a natural sheaf structure on $U \to \tROloc(U)$. We start with defining the restriction mappings.

\begin{lemma}\label{restriction}
Let $U,V$ be open subsets of $X$ with $V \subseteq U$. There is a linear mapping $\rho^{\RO}_{V,U}\colon \RO(U) \to \RO(V)$ which is continuous for the strong topologies on $\RO(U)$ and  $\RO(V)$ and such that for all $ (\Phi_\e)_\e \in \ROloc(U)$ the following properties hold:
\begin{enumerate}[label=(\roman*)]
\item\label{5.4.1} We have that
\begin{gather*}
(\forall W, W_0 \subseteq X: W \Subset W_0 \Subset V)\ (\exists \e_0 \in I)\ (\forall \e<\e_0)\ (\forall u \in E(U))\ (\forall v \in E(V)) \\
(\rho_{W_0, U}(u) = \rho_{W_0, V}(v) \Rightarrow \rho_{W, V}(\rho^{\RO}_{V,U}(\Phi_\e)(v)) = \rho_{W, U}(\Phi_\e(u))).
\end{gather*}
\item\label{5.4.2} For all $W \Subset V$ and all $\tau \in \Linc(E(V),E(U))$ with $\rho_{W_0,U} \circ \tau = \rho_{W_0,V}$ for some $W \Subset W_0 \Subset V$ we have that
\[ \rho_{W,U} \circ \Phi_\e \circ \tau = \rho_{W,V} \circ \rho^{\RO}_{V,U}(\Phi_\e) \]
for $\e$ small enough.
\item\label{5.4.3} For all $W \Subset V$ we have that
\[ \rho_{W,V} \circ \rho^{\RO}_{V,U}(\Phi_\e) \circ \rho_{V,U} = \rho_{W,U} \circ \Phi_\e \]
for $\e$ small enough.
\item\label{5.4.4} For all $W \Subset V$ and $\Phi_1, \Phi_2 \in \ROloc(U)$ that satisfy
\[ \rho_{W,U} \circ \Phi_1 = \rho_{W,U} \circ \Phi_2 \]
we have that \[ \rho_{W,V} \circ \rho^{\RO}_{V,U}(\Phi_1) = \rho_{W,V} \circ \rho^{\RO}_{V,U}(\Phi_2). \]
\end{enumerate}
\end{lemma}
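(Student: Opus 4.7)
The plan is to define $\rho^{\RO}_{V,U}$ by a partition of unity composed with local extension operators. Using \ref{TP3}, pick a locally finite open cover $(V_i)_i$ of $V$ with $V_i\Subset V$, choose auxiliary open sets $V_i\Subset W_i\Subset V$, and take a partition of unity $(\eta^i)_i\subset\Hom(E|_V,E|_V)$ subordinate to $(V_i)_i$ with each $\eta^i|_{F|_V}\in\Hom(F|_V,F|_V)$. The extension principle that follows from \ref{TP3} supplies continuous linear maps $\tau_i\colon E(V)\to E(U)$ with $\rho_{W_i,U}\circ\tau_i=\rho_{W_i,V}$ and $\tau_i|_{F(V)}\in\Linc(F(V),F(U))$. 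I then set
$$\rho^{\RO}_{V,U}(\Phi)\coleq\sum_i \eta^i_V\circ\rho_{V,U}\circ\Phi\circ\tau_i,\qquad \Phi\in\RO(U).$$
Local finiteness of $(V_i)_i$ and $\supp\eta^i\subseteq V_i$ force the sum to be locally finite on $F(V)$: for any $W\Subset V$ only indices $i$ with $V_i\cap W\neq\emptyset$ contribute, a finite set. The map $\Phi\mapsto\rho^{\RO}_{V,U}(\Phi)$ is thus a well-defined linear operator, and by \eqref{S3iso} each continuous seminorm on $\Linc_b(E(V),F(V))$ factors through such a finite sum; this bounds it by strong seminorms on $\Linc_b(E(U),F(U))$ composed with the fixed continuous operators $\tau_i$, $\rho_{V,U}$, $\eta^i_V$, giving continuity in the strong topologies.

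I will prove (ii) first and deduce (i), (iii) from it; (iv) drops out directly from the construction. Fix $(\Phi_\e)_\e\in\ROloc(U)$ and $\tau\colon E(V)\to E(U)$ with $\rho_{W_0,U}\circ\tau=\rho_{W_0,V}$ for some $W\Subset W_0\Subset V$. For the finitely many indices $i$ with $V_i\cap W\neq\emptyset$, both $\tau(v)$ and $\tau_i(v)$ coincide with $v$ on $W_0\cap W_i$, so their difference vanishes there; since $\overline{V_i\cap W}\subseteq\overline{V_i}\cap\overline{W}\subseteq W_i\cap W_0$ we have $V_i\cap W\Subset W_0\cap W_i$, and the localization property yields $\e_0(i)>0$ such that $\Phi_\e(\tau(v))$ and $\Phi_\e(\tau_i(v))$ agree on $V_i\cap W$ for every $v$ whenever $\e<\e_0(i)$. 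Because $\eta^i$ is a sheaf morphism with $\supp\eta^i\subseteq V_i$, the value $\rho_{W,V}(\eta^i_V(h))$ depends on $h$ only through $\rho_{V_i\cap W,V}(h)$; hence I may replace $\tau_i$ by $\tau$ termwise, and invoking $\sum_i\eta^i=\id$ gives $\rho_{W,V}\circ\rho^{\RO}_{V,U}(\Phi_\e)=\rho_{W,U}\circ\Phi_\e\circ\tau$ for $\e<\min_i\e_0(i)$, which is (ii). Property (i) follows because $\tau(v)-u$ vanishes on $W_0$, so the localization property gives $\rho_{W,U}(\Phi_\e(\tau(v)))=\rho_{W,U}(\Phi_\e(u))$ for small $\e$; (iii) is the special case $v=\rho_{V,U}(u)$ of (i); and (iv) is essentially automatic, since if $\Psi_\e\coleq\Phi_{1,\e}-\Phi_{2,\e}$ satisfies $\rho_{W,U}\circ\Psi_\e=0$, then $\rho_{V,U}(\Psi_\e(\tau_i(v)))$ is a section of $F(V)$ whose restriction to $W$ vanishes, whence
$$\rho_{W,V}(\eta^i_V(\rho_{V,U}(\Psi_\e(\tau_i(v)))))=\eta^i_W(0)=0$$
by the sheaf-morphism property of $\eta^i$, and summing over $i$ yields $\rho_{W,V}\circ\rho^{\RO}_{V,U}(\Psi_\e)=0$ for every $\e$.

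The main obstacle is the combinatorial bookkeeping: the open sets $V_i\Subset W_i\Subset V$ of the construction must be woven together with the sets $W\Subset W_0\Subset V$ coming from each application, so that for every relevant index $i$ the intersection $V_i\cap W$ sits compactly inside $W_0\cap W_i$ and the localization property is applicable. Once this layering is arranged, each individual step reduces to a routine invocation of the sheaf-morphism commutation with restriction, the support condition on $\eta^i$, or the localization hypothesis on $(\Phi_\e)_\e$.
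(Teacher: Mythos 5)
Your proposal is correct and follows essentially the same route as the paper: the same definition $\rho^{\RO}_{V,U}(\Phi)=\sum_i\eta^i_V\circ\rho_{V,U}\circ\Phi\circ\tau_i$ via a partition of unity and extension operators, with the verifications resting on local finiteness, the support condition on the $\eta^i$, and the localization hypothesis. The only (immaterial) difference is organizational: you establish \ref{5.4.2} first and deduce \ref{5.4.1} and \ref{5.4.3} from it, whereas the paper proves \ref{5.4.1} directly by comparing $\Phi_\e(u)$ with $\Phi_\e(\tau_i(v))$ and obtains \ref{5.4.2}, \ref{5.4.3} as special cases.
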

\begin{proof}
Let $(V_i)_{i}$ be an open covering of $V$ such that $V_i \Subset V$ for all $i$. Let $(\eta^{i})_i \subset \Hom(F|_V, F|_V)$ be a partition of unity subordinate to $(V_i)_{i}$ and choose $\tau_i \in \Linc(E(V), E(U))$ such that 
$\rho_{V_i,V} = \rho_{V_i, U} \circ \tau_i$ for all $i$. We define
\[ \rho^{\RO}_{V,U}(\Phi) \coleq \sum_{i} \eta^{i}_V \circ \rho_{V,U} \circ \Phi \circ \tau_i. \]
For all $W \Subset V$ it holds that $\supp \eta^i \cap W = \emptyset$ except for $i$ belonging to some finite index set $J$. Hence
\begin{equation}
\rho_{W,V} \circ \rho^{\RO}_{V,U}(\Phi) = \sum_{i \in J} \eta^{i}_W \circ \rho_{W,U} \circ \Phi \circ \tau_i,
\label{representation-compacts-0}
\end{equation}
By \eqref{S3iso} we then have that $\rho^{\RO}_{V,U}(\Phi) \in \RO(V)$. The linearity and continuity of $\rho^{\RO}_{V,U}$ and also \ref{5.4.4} are clear from this expression. We now show \ref{5.4.1}. Let $W 
\Subset V$ and $W \Subset W_0 \Subset V$ be arbitrary. Suppose that the representation \eqref{representation-compacts-0} holds for some finite index set $J$. 
Choose $V'_{i} \Subset V_i$ such that $\supp \eta^{i} \subset V'_i$.
Since $(\Phi_\e)_\e$ is localizing, there is  $\e_0 \in I$ such that for all $i \in J$, $\e < \e_0$, and $u \in E(U)$ it holds that
\begin{equation}
\rho_{W_0 \cap V_i, U}(u) = 0 \Rightarrow \rho_{W \cap V'_i, U}(\Phi_\e(u)) = 0. 
\label{localizing-in-proof}
\end{equation}
Assume that $u \in E(U)$ and $v \in E(V)$ are given such that $\rho_{W_0, U}(u) = \rho_{W_0, V}(v)$. Since 
\[ \rho_{W, U} \circ \Phi_\e = \sum_{i \in J} \eta^{i}_{W} \circ \rho_{W,U} \circ \Phi_\e \]
and $\supp \eta^i  \subset V'_i$ it suffices to show that
\[ \rho_{W \cap V'_i, U}(\Phi_\e(u - \tau_i(v))) = 0 \]
for all $i \in J$. This follows from \eqref{localizing-in-proof} and our choice of $\tau_i$.
Properties \ref{5.4.2} and \ref{5.4.3} are special cases of \ref{5.4.1}.
\end{proof}

\begin{lemma} \label{presheaf-RO}
Let $U,V$ be open subsets of $X$ with $V \subseteq U$. Then, for all $ (\Phi_\e)_\e \in \ROloc(U)$ it holds that 
\begin{enumerate}[label=(\roman*)]
\item\label{5.5.1} $(\rho^{\RO}_{V,U}(\Phi_\e))_\e \in \ROloc(V)$,
\item\label{5.5.2} if $(\Phi_\e)_\e \sim 0$, then $(\rho^{\RO}_{V,U}(\Phi_\e))_\e \sim 0$,
\item\label{5.5.3} for $W \subseteq V \subseteq U$ it holds that $((\rho^{\RO}_{W,V} \circ\rho^{\RO}_{V,U})(\Phi_\e))_\e \sim (\rho^{\RO}_{W,U}(\Phi_\e))_\e$.
\end{enumerate}
\end{lemma}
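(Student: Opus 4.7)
My plan is to deduce all three assertions from the characterization in \Cref{restriction}(i) of $\rho^{\RO}_{V,U}$, together with the extension principle guaranteed by property \ref{TP3}. That principle supplies, for each open chain $W \Subset V \subseteq U$, a continuous linear $\tau\colon E(V) \to E(U)$ with $\rho_{W,V} = \rho_{W,U} \circ \tau$, which is what lets me convert arbitrary sections of $E(V)$ into sections of $E(U)$ that agree on a prescribed compact piece.

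For \ref{5.5.1}, I would take $u = 0 \in E(U)$ in \Cref{restriction}(i): given $V' \Subset V'_0 \Subset V$ and any $v \in E(V)$ with $\rho_{V'_0, V}(v) = 0$, the compatibility $\rho_{V'_0, U}(0) = 0 = \rho_{V'_0, V}(v)$ is trivial, so \Cref{restriction}(i) produces $\rho_{V',V}(\rho^{\RO}_{V,U}(\Phi_\e)(v)) = \rho_{V',U}(\Phi_\e(0)) = 0$ for all $\e$ below a threshold independent of $v$. For \ref{5.5.2}, fix $W \Subset V$, interpose $W \Subset W_0 \Subset V$, and use \ref{TP3} to pick $\tau \in \Linc(E(V), E(U))$ with $\rho_{W_0,V} = \rho_{W_0,U} \circ \tau$. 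Property \Cref{restriction}(ii) then identifies $\rho_{W,V} \circ \rho^{\RO}_{V,U}(\Phi_\e)$ with $\rho_{W,U} \circ \Phi_\e \circ \tau$ for $\e$ small, and the latter vanishes by the hypothesis $(\Phi_\e)_\e \in \NO(U)$, since $W \Subset U$.

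For the more delicate \ref{5.5.3}, the idea is to evaluate both sides at an arbitrary $v \in E(W)$ and show they agree after restriction to some $W' \Subset W$ for $\e$ small. Fix $W' \Subset W'_0 \Subset W$ and use \ref{TP3} twice to choose $\tau' \in \Linc(E(W), E(V))$ and $\tau \in \Linc(E(V), E(U))$ satisfying $\rho_{W'_0, W} = \rho_{W'_0, V} \circ \tau'$ and $\rho_{W'_0, V} = \rho_{W'_0, U} \circ \tau$. For any $v \in E(W)$, setting $u' \coleq \tau'(v)$ and $u \coleq \tau(u')$ produces the chain $\rho_{W'_0, W}(v) = \rho_{W'_0, V}(u') = \rho_{W'_0, U}(u)$. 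I then apply \Cref{restriction}(i) three times: to $\rho^{\RO}_{W,U}(\Phi_\e)$ with extension pair $(u,v)$, to $\rho^{\RO}_{V,U}(\Phi_\e)$ with extension pair $(u,u')$, and to $\rho^{\RO}_{W,V}(\rho^{\RO}_{V,U}(\Phi_\e))$ with extension pair $(u',v)$, where the third invocation relies on \ref{5.5.1} to know the inner operator is localizing. Each application identifies the relevant restriction of the operator evaluated at its input with $\rho_{W',U}(\Phi_\e(u))$, and combining the three equalities yields $\rho_{W',W} \circ \rho^{\RO}_{W,V}(\rho^{\RO}_{V,U}(\Phi_\e)) = \rho_{W',W} \circ \rho^{\RO}_{W,U}(\Phi_\e)$ for $\e$ small, which is the desired $\sim$-equivalence.

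The one technical point demanding attention is that each instance of \Cref{restriction}(i) a priori comes with its own $\e_0$, and I need these thresholds to be independent of $v$; this is built into the quantifier order $(\exists \e_0)(\forall \e<\e_0)(\forall u)(\forall v)$ in that statement, so no additional argument is required. Beyond this, everything amounts to routine bookkeeping with the chain $W' \Subset W'_0 \Subset W \subseteq V \subseteq U$ and the extension operators supplied by \ref{TP3}.
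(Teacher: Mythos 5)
Your proof is correct and follows essentially the same route as the paper's: all three parts are reduced to the characterization of $\rho^{\RO}_{V,U}$ in \Cref{restriction} together with the extension operators supplied by the fine-sheaf structure. The only cosmetic difference is that you invoke the general pointwise statement \Cref{restriction}~\ref{5.4.1} throughout (taking $u=0$ for part \ref{5.5.1}, and three compatible pairs for part \ref{5.5.3}), whereas the paper works with the derived operator identities \Cref{restriction}~\ref{5.4.2} and \ref{5.4.3}; since the latter are special cases of the former, the content is identical.
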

\begin{proof}
\ref{5.5.1} Let $W \Subset V$ and $W \Subset W_0 \Subset V$ be arbitrary. Since $ (\Phi_\e)_\e$ is localizing there is $\e_1 \in I$ such that such that for all $\e < \e_1$ and all $u \in E(U)$ it holds that
\begin{equation}
\rho_{W_0, U}(u) = 0 \Rightarrow \rho_{W , U}(\Phi_\e(u)) = 0. 
\label{localizing-in-proof-3}
\end{equation}
Choose $\tau \in \Linc(E(V), E(U))$ such that $\rho_{W_0,U} \circ \tau = \rho_{W_0,V}$. By \Cref{restriction} \ref{5.4.2} there is $\e_2 \in I$ such that  
\[ \rho_{W,V} \circ \rho^{\RO}_{V,U}(\Phi_\e) = \rho_{W,U} \circ \Phi_\e \circ \tau \]
for all $\e < \e_2$. Set $\e_0 = \min(\e_1, \e_2)$. Let $v \in E(V)$ be such that $\rho_{W_0,V}(v) = 0$. Hence
\[ \rho_{W,V}(\rho^{\RO}_{V,U}(\Phi_\e)(v)) = \rho_{W,U}(\Phi_\e(\tau(v))) = 0 \]
for all $\e < \e_0$.
 
\ref{5.5.2} Let $W \Subset V$ be arbitrary.  Choose $\tau \in \Linc(E(V), E(U))$ such that $\rho_{W_0,U} \circ \tau = \rho_{W_0,V}$. By \Cref{restriction} \ref{5.4.2} we have that 
\[ \rho_{W,V} \circ \rho^{\RO}_{V,U}(\Phi_\e)= \rho_{W,U} \circ \Phi_\e \circ \tau = 0 \]
for $\e$ small enough because $(\Phi_\e)_\e \sim 0$.

\ref{5.5.3} Let $W_0 \Subset W$ be arbitrary. Fix an open set $W'_0$ such that $W_0 \Subset W'_0 \Subset W$. Choose $\tau \in \Linc(E(V), E(U))$ such that $\rho_{W'_0,U} \circ \tau = \rho_{W'_0,V}$ and $\tau' \in \Linc(E(W), E(V))$ such that $\rho_{W'_0,V} \circ \tau = \rho_{W'_0,W}$. Hence $\tau \circ \tau' \in \Linc(E(W), E(U))$ and $\rho_{W'_0,U} \circ \tau \circ \tau' = \rho_{W'_0,W}$  By \Cref{restriction} \ref{5.4.2} we have that 
\begin{gather*}
\rho_{W_0,W} \circ \rho^{\RO}_{W,V}(\rho^{\RO}_{V,U}(\Phi_\e)) = \rho_{W_0,V} \circ \rho^{\RO}_{V,U}(\Phi_\e) \circ \tau \\
= \rho_{W_0,U} \circ \Phi_\e \circ\ \tau' \circ \tau = \rho_{W_0,W} \circ \rho^{\RO}_{W,U}(\Phi_\e) 
\end{gather*}
for $\e$ small enough.
\end{proof}
\Cref{presheaf-RO} implies that the mappings 
\[ \rho^{\RO}_{V,U}([(\Phi_\e)_\e]) \coleq [(\rho^{\RO}_{V,U}(\Phi_\e))_\e] \]
define a presheaf structure on $U \to \tROloc(U)$. We now show that it is in fact a sheaf.
\begin{proposition}\label{sheaf-RO}
$\tROloc$ is a  sheaf of vector spaces. 
\end{proposition}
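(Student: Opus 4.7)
Since \Cref{presheaf-RO} already supplies the presheaf structure on $U \mapsto \tROloc(U)$, it remains to verify the sheaf axioms \ref{S1} and \ref{S2}. Throughout, let $(U_i)_i$ be an open cover of an open set $U \subseteq X$.

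\emph{Axiom \ref{S1}.} Suppose $[(\Phi_\e)_\e] \in \tROloc(U)$ satisfies $(\rho^{\RO}_{U_i,U}(\Phi_\e))_\e \in \NO(U_i)$ for all $i$. Fix $W \Subset U$. Since $\overline{W}$ is compact and $X$ is locally compact, extract finitely many open sets $W_1,\dots,W_N$ with $W_\ell \Subset U_{i(\ell)}$ and $\overline{W} \subseteq \bigcup_\ell W_\ell$. The hypothesis yields $\rho_{W_\ell,U_{i(\ell)}} \circ \rho^{\RO}_{U_{i(\ell)},U}(\Phi_\e) = 0$ for $\e$ small enough; combining with \Cref{restriction}\ref{5.4.3}, this expression equals $\rho_{W_\ell,U} \circ \Phi_\e$. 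Taking the minimum of finitely many thresholds, $\rho_{W_\ell,U}\circ\Phi_\e = 0$ for all $\ell$ and $\e$ sufficiently small. Applying the sheaf axiom \ref{S1} for $F$ to each $\Phi_\e(u)\in F(U)$ on the cover $(W \cap W_\ell)_\ell$ of $W$ gives $\rho_{W,U}\circ\Phi_\e = 0$ for $\e$ small, so $(\Phi_\e)_\e \in \NO(U)$.

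\emph{Axiom \ref{S2}.} Given a compatible family $[(\Phi_{i,\e})_\e] \in \tROloc(U_i)$, we build a glued representative by a partition-of-unity construction. By paracompactness and local compactness, refine $(U_i)_i$ to a locally finite open cover $(V_j)_j$ with $\overline{V_j} \subset U_{i(j)}$. By \ref{TP3}, take a subordinate partition of unity $(\eta^j)_j \subseteq \Hom(E|_U,E|_U)$ with $\eta^j|_F \in \Hom(F|_U,F|_U)$ and $\supp\eta^j \subseteq V_j$. Using the extension principle from \ref{TP3} applied to $\overline{V_j} \subset U_{i(j)} \subseteq U$, pick $\tau_j \in \Linc(E(U_{i(j)}),E(U))$ with $\tau_j|_F \in \Linc(F(U_{i(j)}),F(U))$ and $\rho_{V_j,U}\circ\tau_j = \rho_{V_j,U_{i(j)}}$. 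Set
\[
\Phi_\e \coleq \sum_j \eta^j_U \circ \tau_j \circ \Phi_{i(j),\e} \circ \rho_{U_{i(j)},U}\in \RO(U),
\]
which is well defined on any $W\Subset U$ since only finitely many $\supp\eta^j$ meet $W$.

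One then verifies two things: first, $(\Phi_\e)_\e$ is localizing, which follows from the localizing property of each $(\Phi_{i(j),\e})_\e$ together with the support constraint $\supp\eta^j \subseteq V_j$; and second, for each $i$, $\rho^{\RO}_{U_i,U}([(\Phi_\e)_\e]) = [(\Phi_{i,\e})_\e]$ in $\tROloc(U_i)$. By \Cref{restriction}\ref{5.4.3}, the second reduces to showing, for every $W \Subset U_i$, that
\[
\rho_{W,U}\circ\Phi_\e = \rho_{W,U_i}\circ\Phi_{i,\e}\circ\rho_{U_i,U}
\]
for $\e$ sufficiently small. Expanding $\Phi_\e$ over the finite set $\{j : V_j \cap W \neq \emptyset\}$, using \Cref{restriction}\ref{5.4.2} to replace each $\tau_j$ by a plain restriction, and exploiting the compatibility $(\rho^{\RO}_{U_i \cap U_{i(j)},U_{i(j)}}(\Phi_{i(j),\e}))_\e \sim (\rho^{\RO}_{U_i \cap U_{i(j)},U_i}(\Phi_{i,\e}))_\e$ on overlaps together with $\sum_j \eta^j = \id$, this identity holds modulo $\NO(U_i)$.

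The principal obstacle is this final bookkeeping. One must carefully track how the $\tau_j$ extension operators, the restriction maps, and the partition of unity interact: only after replacing each $\tau_j$ by a restriction (valid up to an error in $\NO$ by \Cref{restriction}) and invoking the overlap compatibility (also valid only modulo $\NO$) can the telescoping identity $\sum_j \eta^j = \id$ be applied to collapse the sum to $\Phi_{i,\e}$. The localizing property of the $\Phi_{i(j),\e}$ is precisely what absorbs the boundary discrepancies created by the $\tau_j$'s into $\NO$ for $\e$ small enough.
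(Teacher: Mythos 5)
Your proof is correct and follows essentially the same route as the paper: axiom (S1) via \Cref{restriction}\ref{5.4.3} after reducing to $W \Subset U_i$, and axiom (S2) by gluing with a partition of unity and extension operators $\tau_j$, then verifying the restriction identity using \Cref{restriction}\ref{5.4.2}, \ref{5.4.3}, the overlap compatibility, and the localizing property. The only differences are cosmetic (you shrink the cover where the paper assumes $U_i \Subset U$, and you sketch the final chain of identities that the paper writes out in full), but the ingredients you name are exactly the ones the paper uses.
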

\begin{proof}
Let $U \subseteq X$ be open and let $(U_i)_i$ be an open covering of $U$.

\ref{S1} Suppose that $[(\Phi_\e)_\e] \in \tROloc(U)$ such that 
\[ \rho^{\RO}_{U_i,U}([(\Phi_\e)_\e]) = 0 \] 
for all $i$. We need to show that $(\Phi_\e)_\e \sim 0$. Let $W \Subset U$ be arbitrary. We may assume without loss of generality that $W \Subset U_i$ for some $i$. By \Cref{restriction} \ref{5.4.3} and our assumption we have that 
\[ \rho_{W,U} \circ \Phi_\e = \rho_{W,U_i} \circ \rho^{\RO}_{U_i,U}(\Phi_\e) \circ \rho_{U_i,U} = 0 \]
for $\e$ small enough.

\ref{S2} Since $X$ is locally compact we may assume without loss of generality that $U_i \Subset U$ for all $i$. Suppose that $[(\Phi_{i,\e})_\e] \in \tROloc(U_i)$ are given such that  
\[ \rho^{\RO}_{U_i \cap U_j,U_i}([(\Phi_{i,\e})_\e]) = \rho^{\RO}_{U_i \cap U_j,U_j}([(\Phi_{j,\e})_\e]) \]
for all $i,j$. Let $(\eta^{i})_i \subset \Hom(F|_U, F|_U)$ be a partition of unity subordinate to the covering $(U_i)_i$. Choose $\tau_i \in \Linc(F(U_i), F(U))$ such that $\rho_{V_i, U} \circ \tau_i = \rho_{V_i,U_i}$ for some $V_i \Subset U_i$ with $\supp \eta^{i} \subset V_i$. We define
\[ \Phi_\e = \sum_i \eta^{i}_U \circ \tau_i \circ \Phi_{i,\e} \circ \rho_{U_i, U} \]
for all $\e \in I$. Notice that $\Phi_\e \in \RO(U)$ because of \eqref{S3iso} and the fact that the family of supports of the $\eta^{i}$ is locally finite. We now show that $(\Phi_\e)_\e$ is localizing. Let $W \Subset U$ and $W \Subset W_0 \Subset U$ be arbitrary and suppose that $\supp \eta^{i} \cap W = \emptyset$ except for $i$ belonging to some finite index set $J$. Choose $V_i' \Subset U_i$ such that $V_i \Subset V'_i$. Since the $(\Phi_{i,\e})_\e$ are localizing there is $\e_0 \in I$ such that for all $i \in J$, $\e < \e_0$, and $u \in E(U_i)$ it holds that 
\begin{equation}
\rho_{W_0 \cap V'_i, U_i}(u) = 0 \Rightarrow \rho_{W \cap V_i, U_i}(\Phi_{i,\e}(u)) = 0. 
\label{localizing-in-proof-2}
\end{equation}
Now suppose that $u \in E(U)$ satisfies $\rho_{W_0, U}(u) = 0$. Since
\[ \rho_{W, U}(\Phi_\e(u)) = \sum_{i \in J} \eta^{i}_{W} ( \rho_{W, U} (\tau_i(\Phi_{i,\e}(\rho_{U_i, U}(u))))) \] 
and $\supp \eta^{i} \subset V_i$ it suffices to show that
\[ \rho_{W \cap V_i, U} (\tau_i(\Phi_{i,\e}(\rho_{U_i, U}(u))))= \rho_{W \cap V_i, U_i}(\Phi_{i,\e}(\rho_{U_i, U}(u))) = 0 \]
for all $i \in J$. This follows from \eqref{localizing-in-proof-2}.
Finally, we show that
\[ \rho^{\RO}_{U_i,U}([(\Phi_\e)_\e]) = [(\Phi_{i,\e})_\e] \]
for all $i$. Let $W \Subset U_i$ be arbitrary and suppose that $\supp \eta^{j} \cap W = \emptyset$ except for $j$ belonging to some finite index set $J$.  Let $\tau \in \Linc(E(U_i), E(U))$ be such that $\rho_{W_0, U} \circ \tau = \rho_{W_0,U_i}$ where $W_0$ is some open set such that $W \Subset W_0 \Subset U_i$. \Cref{restriction} \ref{5.4.2} yields that
\begin{gather*}
\rho_{W,U_i} \circ \rho^{\RO}_{U_i,U}(\Phi_\e) - \rho_{W,U_i} \circ \Phi_{i,\e} = \rho_{W,U} \circ \Phi_\e \circ \tau - \rho_{W,U_i} \circ \Phi_{i,\e} \\ 
= \sum_{j \in J} \eta^{j}_W \circ ( \rho_{W,U} \circ \tau_j \circ \Phi_{j,\e} \circ \rho_{U_j, U} \circ \tau - \rho_{W,U_i} \circ \Phi_{i,\e}).
\end{gather*}
Since $\supp \eta^{j} \subset V_j$ it suffices to show that 
\[ \rho_{W \cap V_j,U} \circ \tau_j \circ \Phi_{j,\e} \circ \rho_{U_j, U} \circ \tau - \rho_{W \cap V_j,U_i} \circ \Phi_{i,\e} = 0 \]
for all $j \in J$ and $\e$ small enough. Our choice of $\tau_j$ and \Cref{restriction} \ref{5.4.2} and \ref{5.4.3} imply that
\begin{align*}
&\rho_{W \cap V_j,U} \circ \tau_j \circ \Phi_{j,\e} \circ \rho_{U_j, U} \circ \tau - \rho_{W \cap V_j,U_i} \circ \Phi_{i,\e} \\
&= \rho_{W \cap V_j,U_j} \circ \Phi_{j,\e} \circ \rho_{U_j, U} \circ \tau - \rho_{W \cap V_j,U_i} \circ \Phi_{i,\e} \\
&=  \rho_{W \cap V_j, U_i \cap U_j} \circ \rho^{\RO}_{U_i \cap U_j, U_j}(\Phi_{j,\e}) \circ \rho_{U_i \cap U_j, U}  \circ \tau - \rho_{W \cap V_j,U_i} \circ \Phi_{i,\e} \\ 
&=  \rho_{W \cap V_j, U_i \cap U_j} \circ \rho^{\RO}_{U_i \cap U_j, U_i}(\Phi_{i,\e}) \circ \rho_{U_i \cap U_j, U}  \circ \tau - \rho_{W \cap V_j,U_i} \circ \Phi_{i,\e} \\ 
&=  \rho_{W \cap V_j, U_i} \circ \Phi_{i,\e} \circ \rho_{U_i, U}  \circ \tau - \rho_{W \cap V_j,U_i} \circ \Phi_{i,\e}
\end{align*}
which equals zero for $\e$ small enough because $(\Phi_{i,\e})_\e$ is localizing.
\end{proof}

\begin{lemma}\label{morphisms-RO}
Every sheaf morphism $\mu \in \Hom(F,F)$ induces a sheaf morphism $\mu \in \Hom(\tROloc, \tROloc)$ via 
\begin{equation}
\mu_U([(\Phi_\e)_\e]) \coleq [(\mu_U  \circ \Phi_\e)_\e],   
\label{sheaf-morphism-extension}
\end{equation}
with $U$ an open subset of $X$.
\end{lemma}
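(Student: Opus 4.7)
The plan is to verify three things in turn: that post-composition with $\mu_U$ sends $\ROloc(U)$ into itself, that it respects the equivalence relation $\sim$ (so \eqref{sheaf-morphism-extension} makes sense as a map $\tROloc(U) \to \tROloc(U)$), and finally that the family $(\mu_U)_U$ commutes with the restriction mappings $\rho^{\RO}_{V,U}$ of $\tROloc$. Linearity in the class is immediate from linearity of $\mu_U$, so these are all that is required.

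First I would check that $(\mu_U \circ \Phi_\e)_\e \in \ROloc(U)$ for any $(\Phi_\e)_\e \in \ROloc(U)$. Each composition $\mu_U \circ \Phi_\e$ lies in $\Linc(E(U),F(U)) = \RO(U)$ since $\mu_U \in \Linc(F(U),F(U))$. For the localizing condition, fix $V \Subset V_0 \Subset U$ and pick the threshold $\e_0$ provided by localizability of $(\Phi_\e)_\e$. For $\e < \e_0$ and $u \in E(U)$ with $\rho_{V_0, U}(u) = 0$, the sheaf-morphism identity $\rho_{V,U} \circ \mu_U = \mu_V \circ \rho_{V,U}$ gives
\[ \rho_{V,U}(\mu_U(\Phi_\e(u))) = \mu_V(\rho_{V,U}(\Phi_\e(u))) = \mu_V(0) = 0. \]
Secondly, if $(\Phi_\e)_\e \in \NO(U)$, then for any $W \Subset U$ one has $\rho_{W,U} \circ \mu_U \circ \Phi_\e = \mu_W \circ \rho_{W,U} \circ \Phi_\e = 0$ for $\e$ small enough, hence $(\mu_U \circ \Phi_\e)_\e \in \NO(U)$. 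Thus $\mu_U$ is a well-defined linear map $\tROloc(U) \to \tROloc(U)$.

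The main obstacle is the third point: verifying the sheaf-morphism compatibility $\rho^{\RO}_{V,U} \circ \mu_U = \mu_V \circ \rho^{\RO}_{V,U}$ on $\tROloc(U)$. The difficulty is that $\rho^{\RO}_{V,U}$ is not defined by any single intrinsic formula; it is constructed in \Cref{restriction} using a partition of unity on $V$ and lifts $\tau_i$, so one cannot simply push $\mu$ through it. The workaround is to use part \ref{5.4.2} of \Cref{restriction}, which characterizes the restriction up to $\sim$ by a single formula once we fix a lift. Concretely, given $[(\Phi_\e)_\e] \in \tROloc(U)$ and $W \Subset V$, choose $W_0$ with $W \Subset W_0 \Subset V$ and $\tau \in \Linc(E(V), E(U))$ with $\rho_{W_0,U} \circ \tau = \rho_{W_0,V}$, which exists by property \ref{TP3}. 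Since both $(\Phi_\e)_\e$ and $(\mu_U \circ \Phi_\e)_\e$ are in $\ROloc(U)$, \Cref{restriction}\ref{5.4.2} gives, for $\e$ small enough,
\begin{align*}
\rho_{W,V} \circ \rho^{\RO}_{V,U}(\mu_U \circ \Phi_\e)
&= \rho_{W,U} \circ \mu_U \circ \Phi_\e \circ \tau \\
&= \mu_W \circ \rho_{W,U} \circ \Phi_\e \circ \tau \\
&= \mu_W \circ \rho_{W,V} \circ \rho^{\RO}_{V,U}(\Phi_\e) \\
&= \rho_{W,V} \circ \mu_V \circ \rho^{\RO}_{V,U}(\Phi_\e),
\end{align*}
where we used the sheaf-morphism property of $\mu$ twice and \Cref{restriction}\ref{5.4.2} again for $(\Phi_\e)_\e$. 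Since $W \Subset V$ was arbitrary, the difference $\rho^{\RO}_{V,U}(\mu_U \circ \Phi_\e) - \mu_V \circ \rho^{\RO}_{V,U}(\Phi_\e)$ lies in $\NO(V)$, which is exactly the required identity in $\tROloc(V)$.
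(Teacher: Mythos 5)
Your proposal is correct and follows essentially the same route as the paper: the key computation verifying compatibility with $\rho^{\RO}_{V,U}$ is the identical chain of equalities via \Cref{restriction}\ref{5.4.2} and the sheaf-morphism identity for $\mu$. The only difference is that you spell out the well-definedness on $\ROloc(U)$ and modulo $\NO(U)$, which the paper dismisses as clear.
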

\begin{proof}
Clearly, $\mu_U\colon \tROloc(U) \to \tROloc(U)$ is a well-defined linear mapping for each $U \subseteq X$ open. We now show that $\mu$ is a sheaf morphism. Let $V,U$ be open subsets of $X$ such that $V \subseteq U$. It suffices to show that for all $W \Subset V$ and all $(\Phi_\e)_\e \in \ROloc(U)$ it holds that
\[ \rho_{W,V} \circ \rho^{\RO}_{V,U}(\mu_U \circ \Phi_\e) = \rho_{W,V} \circ \mu_V \circ \rho^{\RO}_{V,U}(\Phi_\e) \]
for $\e$ small enough. Let $\tau \in \Linc(E(V),E(U))$ be such that $\rho_{W_0, U} \circ \tau = \rho_{W_0,V}$ for some open set $W_0$ such that $W \Subset W_0 \Subset V$. By \Cref{restriction} \ref{5.4.2} we have that
\begin{align*}
\rho_{W,V} \circ \rho^{\RO}_{V,U}(\mu_U \circ \Phi_\e)  &= \rho_{W,U} \circ \mu_U \circ \Phi_\e \circ \tau \\ 
& = \mu_W \circ \rho_{W,U} \circ \Phi_\e \circ \tau \\
& = \mu_W \circ \rho_{W,V} \circ \rho^{\RO}_{V,U}(\Phi_\e) \\
& = \rho_{W,V} \circ \mu_V \circ \rho^{\RO}_{V,U}(\Phi_\e) \\
\end{align*}
for $\e$ small enough.
\end{proof}
We now turn our attention to spaces of test objects.
\begin{lemma} \label{sheaf-TO} 
Let $U \subseteq X$ be open and let $(U_i)_i$ be an open covering of $U$. Let $(\Phi_\e)_\e \in  \ROloc(U)$. Then, $(\Phi_\e)_\e \in  \TOloc(U)$ ($(\Phi_\e)_\e \in  \TOzloc(U)$, respectively) if and only if $(\rho^{\RO}_{U_i,U}(\Phi_\e))_\e \in \TOloc(U_i)$ ($(\rho^{\RO}_{U_i,U}(\Phi_\e))_\e \in \TOzloc(U_i)$, respectively) for all $i$. 
\end{lemma}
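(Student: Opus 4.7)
The plan is to verify the three axioms $(\TO)_1$--$(\TO)_3$ (and their $0$-counterparts, which follow identically with $\cI$ replacing $\cA$ and $0$ replacing $\id$) separately, since the localizing property is already preserved by \Cref{presheaf-RO}. The unifying tool is that property \ref{S3} lets every continuous seminorm on $\Linc_\sigma(E(U),F(U))$ be dominated by finitely many of the form $\Phi \mapsto q_W(\rho_{W,U}(\Phi(u)))$ with $W \Subset U$, $u \in E(U)$, and $q_W \in \csn(F(W))$; such local seminorms are comparable across the $U_i$ via the identities of \Cref{restriction}.

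For the forward direction, fix $i$ and an arbitrary seminorm of the above local form on $\Linc_\sigma(E(U_i),F(U_i))$, given by some $W \Subset U_i$, $v \in E(U_i)$ and $q_W \in \csn(F(W))$. Pick $W_0$ with $W \Subset W_0 \Subset U_i$ and, using property \ref{TP3}, $\tau \in \Linc(E(U_i),E(U))$ with $\rho_{W_0,U} \circ \tau = \rho_{W_0,U_i}$. Then \Cref{restriction} \ref{5.4.2} yields, for $\e$ small,
\[
\rho_{W,U_i}(\rho^{\RO}_{U_i,U}(\Phi_\e)(v)) = \rho_{W,U}(\Phi_\e(\tau(v))).
\]
This identity reduces each axiom for $\rho^{\RO}_{U_i,U}(\Phi_\e)$ on $U_i$ to the same axiom for $\Phi_\e$ on $U$, applied to the pointwise seminorm $\Phi \mapsto q_W(\rho_{W,U}(\Phi(\tau(v))))$; for $(\TO)_2$ the additional identity $\rho_{W,U}(\tau(v)) = \rho_{W,U_i}(v)$ handles the subtraction of the identity.

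For the reverse direction, fix a seminorm $\Phi \mapsto q_W(\rho_{W,U}(\Phi(u)))$ with $W \Subset U$. Since $\overline{W}$ is compact, one extracts a finite subcover $U_{i_1},\dots,U_{i_k}$ of $(U_i)_i$ and, by local compactness, open $W_j \Subset U_{i_j}$ with $\overline{W} \subset \bigcup_j W_j$. Property \ref{S3} applied to $W$ with the cover $(W \cap W_j)_j$ then bounds $q_W \circ \rho_{W,U}$ by a finite maximum of seminorms $q'_j \circ \rho_{W \cap W_j, U}$, and since $W \cap W_j \Subset U_{i_j}$, \Cref{restriction} \ref{5.4.3} gives, for $\e$ small,
\[
\rho_{W \cap W_j, U} \circ \Phi_\e = \rho_{W \cap W_j, U_{i_j}} \circ \rho^{\RO}_{U_{i_j},U}(\Phi_\e) \circ \rho_{U_{i_j},U}.
\]
Each term is then controlled by the corresponding hypothesis on $\rho^{\RO}_{U_{i_j},U}(\Phi_\e)$ via $(\TO)_1$, $(\TO)_2$, or $(\TO)_3$ on $U_{i_j}$, and the $k$ estimates combine using the closure of $\cA$ (or $\cI$) under finite sums.

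The main obstacle lies in this reverse direction: a compact $\overline{W} \subset U$ typically does not sit inside any single cover element $U_i$, so the local estimates must be stitched together via property \ref{S3} and a refining cover adapted to $(U_i)_i$. Once this decomposition is in place, the identities of \Cref{restriction} translate each of $(\TO)_1$, $(\TO)_2$, and $(\TO)_3$ uniformly between local and global regularization operators, and the $0$-variants follow by the same arguments.
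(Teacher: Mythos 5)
Your argument is correct and follows essentially the same route as the paper: the forward direction rests on \Cref{restriction}~\ref{5.4.2} together with an extension operator $\tau$ satisfying $\rho_{W_0,U}\circ\tau=\rho_{W_0,U_i}$, and the reverse direction on \Cref{restriction}~\ref{5.4.3} after reducing to seminorms supported on sets $W$ relatively compact in a single $U_i$. Your explicit justification of that reduction (compactness of $\overline{W}$, a finite subcover, and property~\ref{S3}) merely spells out what the paper states as ``it suffices to show \dots with $W\Subset U_i$ (for some $i$) arbitrary.''
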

\begin{proof}
We only show the statement for $\TOloc$, the proof for $\TOzloc$ is similar. Let  $(\Phi_\e)_\e \in \ROloc(U)$. We first assume that $(\Phi_\e)_\e$ satisfies $(\TO)_j$ with $j = 1,2$ or $3$, and prove that $(\rho^{\RO}_{U_i, U}(\Phi_\e))_\e$ does so as well.

$j = 1$: It suffices to show that for all $u \in E(U_i)$ and all $p \in \csn(F(W))$, with $W \Subset U_i$ arbitrary, there is $\lambda \in \cA$ such that
\[ p(\rho_{W,U_i}(\rho^{\RO}_{U_i,U}(\Phi_\e)(u)) = O(\lambda_\e). \]
Let $\tau \in \Linc(E(U_i),E(U))$ such that $ \rho_{W_0,U} \circ \tau = \rho_{W_0,U_i}$ where $W_0$ is an open set such that $W \Subset  W_0 \Subset U_i$. By  \Cref{restriction} \ref{5.4.2} we have that 
\[ \rho_{W,U_i}(\rho^{\RO}_{U_i,U}(\Phi_\e)(u)) = \rho_{W,U}(\Phi_\e(\tau(u))) \]
for $\e$ small enough. The result now follows from our assumption and the fact that $\rho_{W,U} \in \Linc(F(U),F(V))$.

$j = 2$: It suffices to show that for all $\varphi \in F(U_i)$, all $p \in \csn(F(W))$, with $W \Subset U_i$ arbitrary,  and all $\lambda \in \cI$ it holds that
\[ p(\rho_{W,U_i}(\rho^{\RO}_{U_i,U}(\Phi_\e)(\varphi) - \varphi)) = O(\lambda_\e). \]
Let $\tau \in \Linc(F(U_i),F(U))$ such that $ \rho_{W_0,U} \circ \tau = \rho_{W_0,U_i}$ where $W_0$ is an open set such that $W \Subset  W_0 \Subset U_i$. By \Cref{restriction} \ref{5.4.2} we have that 
\[ \rho_{W,U_i}(\rho^{\RO}_{U_i,U}(\Phi_\e)(\varphi) - \varphi) = \rho_{W,U}(\Phi_\e(\tau(\varphi)) - \tau(\varphi)) \]
for $\e$ small enough. The result now follows from our assumption and the fact that $\rho_{W,U} \in \Linc(F(U), F(W))$.

$j = 3$: It suffices to show that for all $u \in E(U_i)$ and all $p \in \csn(E(W))$, with $W \Subset U_i$ arbitrary, it holds that
\[ p(\rho_{W,U_i}(\rho^{\RO}_{U_i,U}(\Phi_\e)(u) - u))  \to 0. \]
Let $\tau \in \Linc(E(U_i),E(U))$ such that $ \rho_{W_0,U} \circ \tau = \rho_{W_0,U_i}$ where $W_0$ is an open set such that $W \Subset  W_0 \Subset U_i$. By \Cref{restriction} \ref{5.4.2} we have that
\[ \rho_{W,U_i}(\rho^{\RO}_{U_i,U}(\Phi_\e)(u) - u) = \rho_{W,U}(\Phi_\e(\tau(u)) - \tau(u)) \]
for $\e$ small enough. The result now follows from our assumption and the fact that $\rho_{W,U} \in \Linc(E(U),E(V))$.

Conversely, assume that $(\rho^{\RO}_{U_i,U}(\Phi_\e))_\e$ satisfies $\TO_j$ with $j = 1,2$ or $3$ for each $i$. We will prove that $(\Phi_\e)_\e$ does so as well.

$j = 1$: 
It suffices to show that for all $u \in E(U)$ and all $p \in \csn(F(W))$, with $W \Subset U_i$ (for some $i$) arbitrary, there is $\lambda \in \cA$ such that
\[ p(\rho_{W,U}((\Phi_\e(u)))) = O(\lambda_\e). \]
By \Cref{restriction} \ref{5.4.3} we have that
\[ \rho_{W,U}(\Phi_\e)(u) = \rho_{W,U_i}(\rho^{\RO}_{U_i,U}(\Phi_\e)(\rho_{U_i,U}(u))) \]
for $\e$ small enough and the result follows from our assumption and the fact that $ \rho_{W,U_i} \in \Linc(F(U_i), F(W))$.

$j = 2$: It suffices to show that for all $\varphi \in F(U)$ and all $p \in \csn(F(W))$, with $W \Subset U_i$ (for some $i$) arbitrary, and all $\lambda \in \cI$ it holds that
\[ p(\rho_{W,U}((\Phi_\e)(\varphi) - \varphi)) = O(\lambda_\e). \]
By \Cref{restriction} \ref{5.4.3} we have that
\[ \rho_{W,U}((\Phi_\e)(\varphi) - \varphi) = \rho_{W,U_i}(\rho^{\RO}_{U_i,U}(\Phi_\e)(\rho_{U_i,U}(\varphi))- \rho_{U_i,U}(\varphi)) \]
for $\e$ small enough and the result follows from our assumption and the fact that $\rho_{W,U_i} \in \Linc(F(U_i), F(W))$.

$j = 3$: It suffices to show that for all $u \in E(U)$ and all $p \in \csn(E(W))$, with $W \Subset U$ arbitrary, it holds that
\[ p((\Phi_\e)(u) - u)  \to 0. \]
By \Cref{restriction} \ref{5.4.3} we have that
\[ \rho_{W,U}((\Phi_\e)(u) - u) = \rho_{W,U_i}(\rho^{\RO}_{U_i,U}(\Phi_\e)(\rho_{U_i,U}(u))- \rho_{U_i,U}(u)) \]
for $\e$ small enough and the result follows from our assumption and the fact that $\rho_{W,U_i} \in \Linc(E(U_i), E(W))$.
\end{proof}
The following result is an immediate consequence of \Cref{mappings-TO} and \Cref{morphisms-RO}.
\begin{lemma} \label{morphisms-TO}  $\mbox{}$
\begin{enumerate}[label=(\roman*)]
\item Let $\mu^i \in \Hom(E, E)$, $i= 0, \ldots, N$, $N \in \N$, be such that $\mu^i|_F  \in \Hom(F,F)$ and $\sum_{i = 0}^N \mu^i = \id$. Then, $\left( \sum_{i = 0}^N \mu^{i}_U \circ \Phi_{i,\e}\right)_\e \in \TOloc(U)$ for all $(\Phi_{i,\e})_\e \in \TOloc(U)$, $i= 0, \ldots, N$.
\item Let $\mu \in \Hom(E, E)$, be such that $\mu|_F \in \Hom(F,F)$. Then, $(\mu_U \circ \Phi_{\e})_\e \in \TOzloc(U)$ for all $(\Phi_{\e})_\e \in \TOzloc(U)$.
\item Let $\mu \in \Hom(E,E)$ be such that $\mu|_F \in \Hom(F,F)$. Then, $(\mu_U \circ \Phi_\e - \Phi_\e \circ \mu_U)_\e \in \TOzloc(U)$ for all $(\Phi_\e)_\e \in \TOloc(U) \cup \TOzloc(U)$.
\end{enumerate}
\end{lemma}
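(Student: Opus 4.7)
The plan is to split each of the three assertions into two independent checks at the fixed open set $U$: the ``pointwise'' testing conditions $(\TO)_{1,2,3}$ or $(\TO)^0_{1,2,3}$ on the test pair $(E(U),F(U))$, and the sheaf-theoretic localizing property from the definition of $\ROloc(U)$. The first will be handled by invoking \Cref{mappings-TO} at $U$, while the second reduces to \Cref{morphisms-RO} plus one short direct argument for the commutator in (iii).

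For the testing conditions, I would set $T_i \coleq \mu^i_U$ (respectively $T \coleq \mu_U$). The assumption $\mu^i \in \Hom(E,E)$ yields $\mu^i_U \in \Linc(E(U),E(U))$, $\mu^i|_F \in \Hom(F,F)$ yields $\mu^i_U|_{F(U)} \in \Linc(F(U),F(U))$, and the sheaf identity $\sum_{i=0}^N \mu^i = \id$ gives $\sum_{i=0}^N \mu^i_U = \id_{E(U)}$. Hence parts (i), (ii), (iii) of \Cref{mappings-TO} apply verbatim to $\bigl(\sum_i \mu^i_U\circ\Phi_{i,\e}\bigr)_\e$, $(\mu_U\circ\Phi_\e)_\e$ and $(\mu_U\circ\Phi_\e - \Phi_\e\circ\mu_U)_\e$ and deliver the required $\TO$ or $\TO^0$ properties on $(E(U),F(U))$.

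For the localizing property, I would invoke \Cref{morphisms-RO} applied to $\mu^i|_F$ (respectively $\mu|_F$): it yields that $(\mu^i_U\circ\Phi_{i,\e})_\e$ represents an element of $\tROloc(U)$, hence lies in $\ROloc(U)$. A finite sum of localizing sequences is trivially localizing (take the minimum of the finitely many $\e_0$'s from the definition), which settles (i), and (ii) is the single-term case. For (iii) it only remains to verify that $(\Phi_\e\circ\mu_U)_\e$ is localizing, and here I would use the fact that sheaf morphisms commute with restrictions: given $V \Subset V_0 \Subset U$ and $u\in E(U)$ with $\rho_{V_0,U}(u)=0$, one has $\rho_{V_0,U}(\mu_U(u)) = \mu_{V_0}(\rho_{V_0,U}(u)) = 0$, so the localizing property of $(\Phi_\e)_\e$ applied to the pair $V\Subset V_0$ yields $\rho_{V,U}(\Phi_\e(\mu_U(u))) = 0$ for sufficiently small $\e$. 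Combined with $(\mu_U\circ\Phi_\e)_\e \in \ROloc(U)$, the difference is in $\ROloc(U)$ as well. I do not foresee a substantive obstacle; the only discipline required is to keep the pointwise estimates (from \Cref{mappings-TO}) and the sheaf-theoretic support condition (from \Cref{morphisms-RO} together with the one-line commutation argument above) cleanly separated.
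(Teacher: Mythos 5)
Your proposal is correct and follows exactly the route the paper intends: the paper leaves this lemma unproved, remarking only that it is an immediate consequence of \Cref{mappings-TO} and \Cref{morphisms-RO}, and your write-up is a faithful expansion of that remark (pointwise $(\TO)$-conditions at $U$ via \Cref{mappings-TO}, localizing property via the argument underlying \Cref{morphisms-RO} plus the one-line commutation of $\mu_U$ with restrictions for the term $\Phi_\e\circ\mu_U$). No gaps.
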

We conclude this subsection with a lemma that will be very useful later on.
\begin{lemma}\label{soft-RO} Let $W, V, U$ be open sets in $X$ such that $W \Subset V \subseteq U$. For every $(\Phi_\e)_ \e \in  \ROloc(V)$ ($(\Phi_\e)_ \e \in  \TOloc(V)$,$(\Phi_\e)_ \e \in  \TOzloc(V)$ respectively) there is $(\Phi'_\e)_ \e \in  \ROloc(U)$ ($(\Phi'_\e)_ \e \in  \TOloc(U)$,$(\Phi'_\e)_ \e \in  \TOzloc(U)$ respectively) such that
\[ \rho_{W, V} \circ \Phi_\e \circ \rho_{V,U} =\rho_{W, U} \circ \Phi'_\e \]
for $\e$ small enough.
\end{lemma}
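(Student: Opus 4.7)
The plan is to construct $\Phi'_\e$ as a sum of a ``transplant'' of $\Phi_\e$ near $W$ together with (in the $\TOloc$ case) a correction built from an auxiliary global test object. First I would use local compactness to pick nested opens $W \Subset W_1 \Subset W_2 \Subset W_3 \Subset V$. Property (TP3) then supplies a partition of unity $\{\eta^1, \eta^2\} \subset \Hom(E|_U, E|_U)$, with $\eta^i|_F \in \Hom(F|_U, F|_U)$, subordinate to the cover $\{W_2, U\setminus\overline{W_1}\}$ of $U$; the extension principle (also a consequence of (TP3)) yields $\tau \in \Linc(E(V), E(U))$ with $\tau|_{F(V)} \in \Linc(F(V), F(U))$ and $\rho_{W_3, U} \circ \tau = \rho_{W_3, V}$. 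In the $\TOloc$ case I would fix an arbitrary $\Psi_\e \in \TOloc(U)$ (nonempty by the standing assumption of this subsection); in the $\ROloc$ and $\TOzloc$ cases set $\Psi_\e \coleq 0$. Define
\[
\Phi'_\e \coleq \eta^1_U \circ \tau \circ \Phi_\e \circ \rho_{V, U} + \eta^2_U \circ \Psi_\e.
\]

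The compatibility $\rho_{W, U} \circ \Phi'_\e = \rho_{W, V} \circ \Phi_\e \circ \rho_{V, U}$ is immediate: $\supp \eta^2 \subset U\setminus\overline{W_1}$ combined with $\eta^1 + \eta^2 = \id$ gives $\eta^2_Z = 0$ and $\eta^1_Z = \id$ for every open $Z \subseteq W_1$, so the $\Psi$-term drops out after restriction to $W$ and the first summand collapses via $\rho_{W, U} \circ \tau = \rho_{W, V}$. The $\TO$/$\TO^0$ conditions for $\Phi'_\e$ reduce, by a standard add-and-subtract decomposition, to those of $\Phi_\e$ on $V$ and (where relevant) $\Psi_\e$ on $U$, combined with the continuity of $\tau$ and the $\eta^i$. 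The key algebraic input is that the ``defect'' $\tau(\rho_{V, U}(\psi)) - \psi$ (for $\psi \in F(U)$) vanishes on $W_3 \supseteq \supp \eta^1$, so its image under $\eta^1_U$ vanishes everywhere by the sheaf axiom (S1) together with the vanishing of $\eta^1$ off $\supp \eta^1$.

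The main step, for which the whole construction is engineered, is the localizing property. Given $A \Subset A_0 \Subset U$ and $u \in E(U)$ with $\rho_{A_0, U}(u) = 0$, I would cover the compact set $\overline{A}$ by finitely many opens, each relatively compact in $W_3 \cap A_0$ or in $(U \setminus \overline{W_2}) \cap A_0$; this is possible because $\overline{W_2} \subset W_3$ forces $W_3 \cup (U \setminus \overline{W_2}) = U$. On a piece $A^1 \Subset W_3 \cap A_0$ the identity $\rho_{A^1, U} \circ \tau = \rho_{A^1, V}$ rewrites the first summand as $\eta^1_{A^1}(\rho_{A^1, V}(\Phi_\e(\rho_{V, U}(u))))$, which vanishes for $\e$ small by the localizing of $\Phi_\e$ (applied with $A^1 \Subset W_3 \cap A_0 \Subset V$). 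On a piece $A^2 \Subset (U \setminus \overline{W_2}) \cap A_0$, disjointness from $\supp \eta^1 \subset W_2$ forces $\eta^1_{A^2} = 0$ and thus kills the first summand, while the localizing of $\Psi_\e$ (or the choice $\Psi_\e = 0$) kills the second. Sheaf axiom (S1) assembles these vanishings into the desired $\rho_{A, U}(\Phi'_\e(u)) = 0$. The main obstacle is precisely the bookkeeping of the four nested neighborhoods: this is the minimal arrangement that simultaneously lets $\Phi_\e$'s localizing be applied on $A^1 \Subset V$, forces $\eta^1$ to vanish on a neighborhood of $A^2$, and forces $\eta^1 = \id$ on a neighborhood of $\overline{W}$.
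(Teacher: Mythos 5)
Your construction is correct, and it rests on the same underlying idea as the paper's proof --- patch $\Phi_\e$ near $W$ against an auxiliary global test object by means of a cutoff sheaf morphism that is the identity near $W$ and vanishes outside a relatively compact subset of $V$ --- but the execution is genuinely different. The paper never writes down a formula on $U$: it forms the class $\mu_V([(\Phi_\e)_\e]) + (\id-\mu)_V(\rho^{\RO}_{V,U}([(\Phi''_\e)_\e]))$ in $\tTOloc(V)$, checks that it agrees with the restriction of $[(\Phi''_\e)_\e]$ on the overlap with $U\setminus\overline{W_1}$, and then invokes the already-established machinery (\Cref{restriction}, \Cref{sheaf-RO}, \Cref{sheaf-TO}, \Cref{morphisms-TO}) to glue these data into a class on $U$, the compatibility identity dropping out of \Cref{restriction}~(iii). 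Your explicit operator $\eta^1_U\circ\tau\circ\Phi_\e\circ\rho_{V,U}+\eta^2_U\circ\Psi_\e$ is essentially what that glued class looks like when unwound, and your hand verification --- the exact compatibility identity via $\eta^2_W=0$, the $(\TO)$ conditions via the defect identity $\eta^1_U\circ\tau\circ\rho_{V,U}=\eta^1_U$ (which is correct: the defect vanishes on $W_3\supseteq\overline{\supp\eta^1}$, so \ref{S1} kills its image under $\eta^1_U$), and the localizing property via the two-piece cover --- is sound. What your route buys: it is self-contained, it treats all three cases uniformly by setting $\Psi_\e=0$ (the paper writes out only the $\TOloc$ case), and it yields the compatibility identity for \emph{all} $\e$ rather than only for small $\e$. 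What the paper's route buys: brevity, since the gluing and the stability of $\TOloc$ under cutoffs are not re-derived. One small expository gap you should repair: on the pieces $A^1\Subset W_3\cap A_0$ you dispose only of the first summand; the second summand $\eta^2_{A^1}(\rho_{A^1,U}(\Psi_\e(u)))$ must also be shown to vanish there, which follows from exactly the localizing property of $\Psi_\e$ (applied with $A^1\Subset A_0\Subset U$) that you already invoke on the $A^2$ pieces.
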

\begin{proof}
We only show the statement for $(\Phi_\e)_ \e \in  \TOloc(V)$, the other cases can be treated similarly. Choose open sets $W_0, W_1$ such that $W \Subset W_0 \Subset W_1 \Subset V$ and let $\mu \in \Hom(E,E)$ be such that $\mu|_F \in \Hom(F,F)$, $\mu_{W_0} = \id$, and $\mu_{U \backslash \overline{W_1}} = 0$. Furthermore, pick an arbitrary element $(\Phi''_\e)_\e \in  \TOloc(U)$. By  \Cref{sheaf-TO} we have that
\[ \rho^{\RO}_{U \backslash \overline{W_1},U}([(\Phi''_\e)_\e]) \in  \tTOloc(U \backslash \overline{W_1}), \]
and by \Cref{morphisms-TO} it holds that
\[ \mu_V([(\Phi_\e)_\e]) + (\id -\mu)_V (\rho^{\RO}_{V,U}([(\Phi''_\e)_\e])) \in  \tTOloc(V). \]
Since
\[
\rho^{\RO}_{U \backslash \overline{W_1} \cap V,U \backslash \overline{W_1}}(\rho^{\RO}_{U \backslash \overline{W_1},U}([(\Phi''_\e)_\e])) =  \rho^{\RO}_{U \backslash \overline{W_1} \cap V,V}(\mu_V([(\Phi_\e)_\e]) + (\id -\mu)_V (\rho^{\RO}_{V,U}([(\Phi''_\e)_\e]))),
\]
\Cref{sheaf-RO} and \Cref{sheaf-TO} imply that there is an element $(\Phi'_\e)_\e \in  \TOloc(U)$ such that
\[ \rho^{\RO}_{W_0,U}([(\Phi'_\e)_\e]) = \rho^{\RO}_{W_0,V}(\mu_V([(\Phi_\e)_\e]) + (\id -\mu)_V (\rho^{\RO}_{V,U}([(\Phi''_\e)_\e]))) = \rho^{\RO}_{W_0,V}([(\Phi_\e)_\e]). \]
The result now follows from \Cref{restriction} \ref{5.4.3}.
\end{proof}

\subsection{Sheaves of nonlinear extensions}Let $(E,F)$ be a test pair of sheaves. 
We write $\cE(U) = \cE(E(U),F(U))$.
\begin{definition}\label{basicspaces}
$R \in \cE(U)$ is called \emph{local} if for all $V\subseteq U$ and all $\Phi_1, \Phi_2 \in \RO(U)$ the implication
\[ ( \rho_{V, U} \circ \Phi_1 = \rho_{V, U} \circ \Phi_2 ) \Longrightarrow (\rho_{V,U} ( R(\Phi_1)) = \rho_{V, U} ( R(\Phi_2)) ) \]
holds. The set of all local elements of $\cE(U)$ is denoted by $\cEloc(U)$.
\end{definition}
\begin{remark}\label{local-derivatives}
If $R \in \cE(U)$ is local then the identities 
\[ \rho_{V, U} \circ \Phi_1 = \rho_{V, U} \circ \Phi_2, \qquad \rho_{V, U} \circ \Psi_{i,1} = \rho_{V, U} \circ \Psi_{i,2}, \]
with $\Phi_1, \Phi_2 , \Psi_{1,i}, \Psi_{2,i} \in \RO(U)$ for $i = 1, \ldots, l$ imply that
\[ \rho_{V,U} ( \ud^l R(\Phi_1)(\Psi_{1,1}, \ldots, \Psi_{l,1})) =\rho_{V,U} ((\ud^lR)(\Phi_2)(\Psi_{1,2}, \ldots, \Psi_{l,2})). \]
\end{remark}
Next, we define a restriction mapping on $\cEloc$.
\begin{lemma}\label{restriction-basic-space}
Let $U,V$ be open subsets of $X$ with $V \subseteq U$. There is a unique linear mapping $\rho^{\cE}_{V,U}\colon  \cEloc(U) \to \cEloc(V)$ such that 
\begin{enumerate}[label=(\roman*)]
\item\label{5.13.1} for all  $W \Subset V$, $\Phi \in \RO(V)$, and $\Phi' \in \RO(U)$ which satisfy
\[
\rho_{W, V} \circ \Phi \circ \rho_{V,U} =\rho_{W, U} \circ \Phi', 
\] we have
\[ \rho_{W, V} (\rho^{\cE}_{V,U}(R)(\Phi)) = \rho_{W, U} (R(\Phi')). \]
\end{enumerate}
Moreover, the following properties are satisfied:
\begin{enumerate}[label=(\roman*),resume]
\item\label{5.13.2} For all $l \in \N$ and all $W \Subset V$ it holds that if  $\Phi \in \RO(V)$, $\Phi' \in \RO(U)$ and  $\Psi_i \in \RO(V)$, $\Psi'_i \in \RO(U)$, $i = 0, \ldots, l$,  satisfy
\[ \rho_{W, V} \circ \Phi \circ \rho_{V,U} =\rho_{W, U} \circ \Phi',  \qquad \rho_{W, V} \circ \Psi_i \circ \rho_{V,U} =\rho_{W, U} \circ \Psi'_i \]
for all $i = 1, \ldots, l$, then
\[ \rho_{W, V}((\ud^l(\rho^{\cE}_{V,U}(R)))(\Phi)(\Psi_1, \ldots, \Psi_l))= \rho_{W, U}((\ud^lR)(\Phi')(\Psi'_1, \ldots, \Psi'_l)). \]
\item\label{5.13.3} For $W \subseteq V \subseteq U$ it holds that $\rho^{\cE}_{W,V} \circ \rho^{\cE}_{V,U} = \rho^{\cE}_{W,U}$.
\end{enumerate}
\end{lemma}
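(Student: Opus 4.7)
The plan is to define $\rho^{\cE}_{V,U}(R)(\Phi) \in F(V)$ by prescribing its restriction to each $W \Subset V$ and assembling the pieces via the sheaf isomorphism $F(V) \cong \varprojlim_{W \Subset V} F(W)$ from \eqref{S3iso}. Given $R \in \cEloc(U)$, $\Phi \in \RO(V)$ and $W \Subset V$, invoke the extension principle from property \ref{TP3} to pick $\tau \in \Linc(E(V), E(U))$ with $\tau|_{F(V)} \in \Linc(F(V),F(U))$ and $\rho_{W_0, V} = \rho_{W_0, U} \circ \tau$ for some $W \Subset W_0 \Subset V$. Set $\Phi' \coleq \tau \circ \Phi \circ \rho_{V,U} \in \RO(U)$; then $\rho_{W, V} \circ \Phi \circ \rho_{V,U} = \rho_{W, U} \circ \Phi'$ by construction, and we tentatively put $g_W(\Phi) \coleq \rho_{W, U}(R(\Phi'))$. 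Well-definedness of $g_W(\Phi)$ (independence of $\tau, \Phi'$) follows from locality of $R$ as in \Cref{basicspaces}: any two admissible $\Phi'$ agree after $\rho_{W,U}$. Compatibility of the family across $W_1 \Subset W_2 \Subset V$ is immediate, because any $\Phi'$ valid for $W_2$ is also valid for $W_1$ and the $\rho_{W, U}(R(\Phi'))$'s are compatible under the restrictions of the sheaf $F$.

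Next I would establish smoothness. Fix, for each $W \Subset V$, an extension $\tau_W$ as above; the map $S_{\tau_W}\colon \RO(V)\to\RO(U)$, $\Phi \mapsto \tau_W\circ\Phi\circ\rho_{V,U}$, is continuous linear (hence smooth in the convenient sense) because $\tau_W|_{F(V)}$ and $\rho_{V,U}$ are. Therefore
\[
\rho_{W,V}\circ\rho^{\cE}_{V,U}(R)\;=\;\rho_{W,U}\circ R\circ S_{\tau_W}
\]
is smooth as a map $\RO(V)\to F(W)$. Using property \ref{S3} — the topology on $F(V)$ is the projective topology with respect to the $\rho_{W,V}$ — this local smoothness lifts to smoothness of $\rho^{\cE}_{V,U}(R)\colon\RO(V)\to F(V)$, so $\rho^{\cE}_{V,U}(R)\in\cE(V)$. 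Locality of $\rho^{\cE}_{V,U}(R)$ in the sense of \Cref{basicspaces} is then verified through the same local formula: if $\Phi_1,\Phi_2 \in \RO(V)$ agree on some $W'\subseteq V$ after $\rho_{W',V}$, then for any $W \Subset W'$ the extensions $\tau \circ \Phi_i \circ \rho_{V,U}$ agree after $\rho_{W,U}$, whence locality of $R$ yields the required equality; by the projective limit this gives equality after $\rho_{W', V}$.

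Uniqueness, characterization (i), and property \ref{5.13.2} follow from this local representation. Uniqueness is immediate: (i) prescribes $\rho_{W, V}(\rho^{\cE}_{V, U}(R)(\Phi))$ for every $W \Subset V$, and the projective limit then determines the value in $F(V)$. For \ref{5.13.2}, since $S_{\tau_W}$ is linear, the chain rule gives
\[
\ud^l\bigl(\rho_{W,U}\circ R\circ S_{\tau_W}\bigr)(\Phi)(\Psi_1,\dots,\Psi_l) = \rho_{W,U}\bigl(\ud^l R(S_{\tau_W}\Phi)(S_{\tau_W}\Psi_1,\dots,S_{\tau_W}\Psi_l)\bigr),
\]
and \Cref{local-derivatives} lets one replace $S_{\tau_W}\Phi,\,S_{\tau_W}\Psi_i$ by any $\Phi',\Psi'_i \in \RO(U)$ satisfying the hypothesis of \ref{5.13.2}, yielding the desired identity.

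Finally for \ref{5.13.3}, given $W \subseteq V \subseteq U$, apply the construction twice: pick $\tau \in \Linc(E(W),E(V))$ and $\tau' \in \Linc(E(V),E(U))$ from \ref{TP3} with compatible patches over some $W' \Subset W$. The composition $\tau' \circ \tau \in \Linc(E(W),E(U))$ then satisfies the hypothesis of (i) for both $\rho^{\cE}_{W, V} \circ \rho^{\cE}_{V, U}(R)$ (applied to the inner factor $\tau$, then the outer factor $\tau'$) and $\rho^{\cE}_{W, U}(R)$ directly, so the uniqueness part forces them to coincide. The main obstacle in this proof is not any single verification but ensuring coherent interplay between three notions — locality of $R$, the extension principle \ref{TP3}, and the projective description \ref{S3iso} of $F(V)$; once one fixes the local formula $\rho_{W, U}\circ R\circ S_{\tau_W}$ as the working tool, both smoothness and all the compatibility statements reduce to essentially routine bookkeeping.
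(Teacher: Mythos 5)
Your proof is correct and rests on the same local formula that drives the paper's argument, namely $\rho_{W,V}\circ\rho^{\cE}_{V,U}(R)=\rho_{W,U}\circ R\circ S_{\tau_W}$ with $S_{\tau_W}(\Phi)=\tau_W\circ\Phi\circ\rho_{V,U}$, but you assemble the global object by a genuinely different route. The paper fixes a covering $(V_i)_i$ of $V$ by relatively compact sets, a partition of unity $(\eta^{i})_i\subset\Hom(F|_V,F|_V)$, and extension operators $\tau_i$, and then defines $\rho^{\cE}_{V,U}(R)$ by the single closed formula $\sum_i\eta^{i}_V\circ\rho_{V,U}\circ R\circ f_i$ with $f_i(\Phi)=\tau_i\circ\Phi\circ\rho_{V,U}$; you instead verify that the local values $g_W(\Phi)=\rho_{W,U}(R(S_{\tau_W}\Phi))$ form a compatible family and glue them through the projective-limit description \eqref{S3iso} of $F(V)$. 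Both routes use \ref{TP3} to produce the extension operators, and both need \cite[Lemma 3.8]{KM} to pass smoothness through the limit --- your appeal to \ref{S3} should be read as an appeal to exactly that lemma, since the purely topological statement does not by itself govern smoothness in the convenient sense. Your assembly keeps the well-definedness, locality, and characterization checks slightly cleaner (one extension operator per $W$ instead of a finite sum over the partition), at the price of having to verify compatibility of the projective family first; the paper's formula gives an element of $\cE(V)$ outright. For \ref{5.13.2} you replace the paper's induction on $l$ via difference quotients by the chain rule for compositions with continuous linear maps together with \Cref{local-derivatives}, which is shorter and equally valid given that that remark is part of the paper's toolkit. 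Your treatment of \ref{5.13.3} coincides with the paper's.
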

\begin{proof}
Let $(V_i)_i$ be an open covering of $V$ such that $V_i \Subset V$ for all $i$ and let $(\eta^{i})_i \subset \Hom(F|_V,F|_V)$ be a partition of unity subordinate to $(V_i)_i$. Choose $\tau_i \in \Linc(F(V),F(U))$ such that $\rho_{V_i,U} \circ \tau_i = \rho_{V_i,V}$. For each $i$ we define the mapping $f_i \in \Linc(\RO(V), \RO(U))$ via 
\[ f_i(\Phi) \coleq \tau_i \circ \Phi \circ \rho_{V,U}. \]
Note that 
\begin{equation}
\rho_{V_i,U} \circ f_i(\Phi) =  \rho_{V_i,V} \circ \Phi \circ \rho_{V,U}.
\label{property-f}
\end{equation}
We set 
\[ \rho^{\cE}_{V,U}(R) \coleq \sum_{i} \eta^{i}_V \circ \rho_{V,U} \circ R \circ f_i. \]
We start by showing that $\rho^{\mathcal{E}}_{V,U}(R)$ is smooth. By \cite[Lemma 3.8]{KM} it suffices to show that
$\rho_{W,V} \circ \rho^{\mathcal{E}}_{V,U}(R) \colon \RO(V) \to F(W)$ is smooth for all $W \Subset V$. Since 
\begin{equation}
\rho_{W,V} \circ \rho^{\mathcal{E}}_{V,U}(R) = \sum_{i \in J} \eta^{i}_W \circ \rho_{W,U} \circ R \circ f_i
\label{representation-on-compacts}
\end{equation}
for some finite index set $J$, this follows from the fact that $\eta^{i}_W$, $\rho_{W,U}$, and  $f_i$ are continuous linear mappings. Next, we show that $\rho^{\mathcal{E}}_{V,U}(R)$ is local. It suffices to show that for all $W \Subset V$,
\[ \rho_{W, V} \circ \Phi_1 = \rho_{W,V} \circ \Phi_2, \qquad \Phi_1, \Phi_2 \in \RO(V), \]
implies
\[ \rho_{W,V} ( \rho^{\mathcal{E}}_{V,U}(R)(\Phi_1)) = \rho_{W, V} ( \rho^{\mathcal{E}}_{V,U}(R)(\Phi_2)). \]
The mapping $\rho_{W,V} \circ \rho^{\mathcal{E}}_{V,U}(R)$ can be represented as \eqref{representation-on-compacts} for some finite index set $J$. Since $\supp \eta^{i} \subset V_i$ it suffices to show that
\[ \rho_{W\cap V_i,U}(R(f_i(\Phi_1))) =   \rho_{W\cap V_i,U}(R(f_i(\Phi_2))) \]
for all $i \in J$. By locality of $R$ this follows from \eqref{property-f} and our assumption. The linearity of the mapping $\rho^{\mathcal{E}}_{V,U}$ is clear. We now show \ref{5.13.1}. Given $W \Subset V$, the mapping $\rho_{W,V} \circ \rho^{\mathcal{E}}_{V,U}(R)$ can be represented as \eqref{representation-on-compacts} for some finite index set $J$. 
Since
\[ \rho_{W,U}(R(\Phi')) = \sum_{i \in J} \eta^{i}_W (\rho_{W,U}( R(\Phi'))) \] 
and $\supp \eta^{i} \subset V_i$ it is enough to show that 
\[ \rho_{W \cap V_i,U}(R(f_i(\Phi))) =   \rho_{W \cap V_i,U}( R(\Phi')) \]
for all $i \in J$.
Again, by locality of $R$ this follows from \eqref{property-f} and our assumption. 
The mapping $\rho^{\mathcal{E}}_{V,U}$ is unique because for any $W \Subset V$ and any $\Phi \in \RO(V)$ one can find $\Phi' \in \RO(U)$ such that $\rho_{W, V} \circ \Phi \circ \rho_{V,U} =\rho_{W, U} \circ \Phi'$;  this follows from the fact that $F$ is fine. We continue with showing \ref{5.13.2}. We use induction on $l$. The case $l = 0$ has been treated in \ref{5.13.1}. Now suppose that the statement holds for $l-1$ and let us show it for $l$.
\begin{align*}
&\rho_{W, V}((\ud^l(\rho^{\mathcal{E}}_{V,U}(R)))(\Phi)(\Psi_1, \ldots, \Psi_l)) \\
&= \rho_{W, V} \left( \left.\frac{\ud}{\ud t}\right|_{t=0} (\ud^{l-1}(\rho^{\mathcal{E}}_{V,U}(R)))(\Phi + t \Psi_1)(\Psi_2, \ldots, \Psi_l) \right) \\
&=  \left.\frac{\ud}{\ud t}\right|_{t=0} \rho_{W, V}((\ud^{l-1}(\rho^{\mathcal{E}}_{V,U}(R)))(\Phi + t \Psi_1)(\Psi_2, \ldots, \Psi_l)) \\
&= \left.\frac{\ud}{\ud t}\right|_{t=0} \rho_{W, V}((\ud^{l-1}(\rho^{\mathcal{E}}_{V,U}(R)))(\Phi' + t \Psi'_1)(\Psi'_2, \ldots, \Psi'_l)) \\ 
&= \rho_{W, V} \left( \left.\frac{\ud}{\ud t}\right|_{t=0} (\ud^{l-1}(\rho^{\mathcal{E}}_{V,U}(R)))(\Phi' + t \Psi'_1)(\Psi'_2, \ldots, \Psi'_l) \right) \\
&=\rho_{W, V}((\ud^l(\rho^{\mathcal{E}}_{V,U}(R)))(\Phi')(\Psi'_1, \ldots, \Psi'_l)). \\
\end{align*}
Finally, we prove \ref{5.13.3}. Let $R \in \cEloc(U)$ be arbitrary. It suffices to show that for all $\Phi \in \RO(W)$ and all $W_0 \Subset W$ it hold that
\[ \rho_{W_0,W} (\rho^{\mathcal{E}}_{W,V}(\rho^{\mathcal{E}}_{V,U}(R))(\Phi)) = \rho_{W_0,W} ( \rho^{\mathcal{E}}_{W,U}(R)(\Phi)). \]
Choose $\Phi' \in \RO(V)$ such that
\[ \rho_{W_0, W} \circ \Phi \circ \rho_{W,V} =\rho_{W_0, V} \circ \Phi' \]
and  $\Phi'' \in \RO(U)$ such that
\[ \rho_{W_0, V} \circ \Phi' \circ \rho_{V,U} =\rho_{W_0, U} \circ \Phi''. \]
Hence also 
\[ \rho_{W_0, W} \circ \Phi \circ \rho_{W,U} =\rho_{W_0, U} \circ \Phi''. \]
Therefore \ref{5.13.1} implies that
\begin{align*}
\rho_{W_0,W} (\rho^{\mathcal{E}}_{W,V}(\rho^{\mathcal{E}}_{V,U}(R))(\Phi)) &= \rho_{W_0,V} (\rho^{\mathcal{E}}_{V,U}(R)(\Phi')) \\ 
&=  \rho_{W_0,U} (R(\Phi'')) \\ 
& =  \rho_{W_0,W} (\rho^{\mathcal{E}}_{W,U}(R)(\Phi)). \qedhere
\end{align*}
\end{proof}
We now discuss the extension of sheaf morphisms to $\mathcal{E}$.

\begin{lemma}\label{extension-mapping-local}
Let $T\colon F \times \cdots \times F \to F$ be a multilinear sheaf morphism. For each open subset $U \subseteq X$ consider the mapping $\widetilde{T}_U \colon \mathcal{E}(U) \times \cdots \times \mathcal{E}(U)\to \mathcal{E}(U)$
given by $\widetilde{T}_U := \widetilde{T_U}$ as in \eqref{definition1}.
Then, $\widetilde{T}$ preserves locality, i.e., $\widetilde{T}_U(\cEloc(U), \ldots, \cEloc(U)) \subseteq \cEloc(U)$
and 
\[ \rho^{\mathcal{E}}_{V,U}(\widetilde{T}_U(R_1, \ldots, R_n)) =  \widetilde{T}_V ( \rho^{\mathcal{E}}_{V,U}(R_1), \ldots, \rho^{\mathcal{E}}_{V,U}(R_n)) \]
for all  open subsets $U,V$ of $X$ with $V \subseteq U$.
\end{lemma}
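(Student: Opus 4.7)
The plan is to verify locality first and then apply the uniqueness clause in \Cref{restriction-basic-space} to identify the restricted extension with the extension of the restrictions.

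For locality, fix $R_1, \ldots, R_n \in \cEloc(U)$ and suppose $V \subseteq U$ together with $\Phi_1, \Phi_2 \in \RO(U)$ satisfy $\rho_{V,U} \circ \Phi_1 = \rho_{V,U} \circ \Phi_2$. By locality of each $R_i$, we have $\rho_{V,U}(R_i(\Phi_1)) = \rho_{V,U}(R_i(\Phi_2))$ for every $i$. Since $T$ is a multilinear sheaf morphism, the naturality identity $\rho_{V,U} \circ T_U = T_V \circ (\rho_{V,U} \times \cdots \times \rho_{V,U})$ holds, and applying it gives
\[
\rho_{V,U}(\widetilde{T}_U(R_1,\dots,R_n)(\Phi_1)) = T_V(\rho_{V,U}(R_1(\Phi_1)), \dots, \rho_{V,U}(R_n(\Phi_1))),
\]
which by the equality of the arguments equals the same expression with $\Phi_2$, so $\widetilde{T}_U(R_1,\dots,R_n) \in \cEloc(U)$.

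For the compatibility with restrictions, I would invoke the uniqueness statement in \Cref{restriction-basic-space}: it suffices to check that $\widetilde{T}_V(\rho^{\cE}_{V,U}(R_1), \dots, \rho^{\cE}_{V,U}(R_n))$ satisfies the characterizing property \ref{5.13.1} of $\rho^{\cE}_{V,U}(\widetilde{T}_U(R_1,\dots,R_n))$. So take $W \Subset V$, $\Phi \in \RO(V)$ and $\Phi' \in \RO(U)$ with $\rho_{W,V} \circ \Phi \circ \rho_{V,U} = \rho_{W,U} \circ \Phi'$. Using \ref{5.13.1} applied to each $R_i$ one obtains $\rho_{W,V}(\rho^{\cE}_{V,U}(R_i)(\Phi)) = \rho_{W,U}(R_i(\Phi'))$. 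Feeding these equalities into the naturality of $T$ yields
\[
\rho_{W,V}(\widetilde{T}_V(\rho^{\cE}_{V,U}(R_1),\dots,\rho^{\cE}_{V,U}(R_n))(\Phi)) = T_W\bigl(\rho_{W,U}(R_1(\Phi')), \dots, \rho_{W,U}(R_n(\Phi'))\bigr),
\]
and one more application of the sheaf morphism property rewrites the right-hand side as $\rho_{W,U}(\widetilde{T}_U(R_1,\dots,R_n)(\Phi'))$. This is exactly the defining identity of $\rho^{\cE}_{V,U}(\widetilde{T}_U(R_1,\dots,R_n))$ from \Cref{restriction-basic-space}\ref{5.13.1}, and by the uniqueness asserted there the two sections of $\cEloc(V)$ must coincide.

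No step presents a genuine obstacle; everything reduces to the naturality of $T$ together with locality. The only care needed is to avoid proving the second identity by a direct partition-of-unity calculation (which would be painful), and instead exploit the uniqueness built into \Cref{restriction-basic-space} to reduce the verification to a pointwise check against an arbitrary pair $(\Phi,\Phi')$ that matches under $\rho_{W,V} \circ \Phi \circ \rho_{V,U} = \rho_{W,U} \circ \Phi'$.
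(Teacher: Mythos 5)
Your proposal is correct and follows essentially the same route as the paper: the paper also reduces the restriction identity to checking, for each $W \Subset V$ and each compatible pair $(\Phi,\Phi')$, the chain of equalities obtained from \Cref{restriction-basic-space}~\ref{5.13.1} applied to $\widetilde{T}_U(R_1,\dots,R_n)$ and to each $R_i$, interleaved with the naturality of $T$. Your phrasing via the uniqueness clause of \Cref{restriction-basic-space} and the paper's phrasing (``it suffices to show equality after composing with $\rho_{W,V}$ for all $W \Subset V$'') amount to the same argument, both resting on the fact, noted in the paper's proof of \Cref{restriction-basic-space}, that a compatible $\Phi'$ always exists because $F$ is fine.
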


\begin{proof}
The mappings $T_U$ are well-defined by \Cref{extension-bs}. Moreover, the fact that the $T_U$ preserve locality is clear from their definition. In order to show the last property it suffices to show that 
\[ \rho_{W,V}(\rho^{\mathcal{E}}_{V,U}(T_U(R_1, \ldots, R_n))(\Phi)) =  \rho_{W,V}(T_V ( \rho^{\mathcal{E}}_{V,U}(R_1), \ldots, \rho^{\mathcal{E}}_{V,U}(R_n))(\Phi)) \]
for all $\Phi \in \RO(V)$ and all $W \Subset V$. Choose $\Phi' \in  \RO(U)$ such that
\[ \rho_{W, V} \circ \Phi \circ \rho_{V,U} =\rho_{W, U} \circ \Phi'. \]
\Cref{restriction-basic-space} \ref{5.13.1} implies that
\begin{align*}
\rho_{W,V}(\rho^{\mathcal{E}}_{V,U}(T_U(R_1, \ldots, R_n))(\Phi)) &= \rho_{W,U}(T_U(R_1, \ldots, R_n)(\Phi')) \\
& = \rho_{W,U}(T_U(R_1(\Phi'), \ldots, R_n(\Phi'))) \\
&= T_W(\rho_{W,U}(R_1(\Phi')), \ldots, \rho_{W,U}(R_n(\Phi'))) \\
&= T_W(\rho_{W,V}(\rho^{\mathcal{E}}_{V,U}(R_1)(\Phi)), \ldots, \rho_{W,V}(\rho^{\mathcal{E}}_{V,U}(R_n)(\Phi))) \\
& = \rho_{W,V}(T_V(\rho^{\mathcal{E}}_{V,U}(R_1)(\Phi), \ldots, \rho^{\mathcal{E}}_{V,U}(R_n)(\Phi))) \\
& = \rho_{W,V}(T_V ( \rho^{\mathcal{E}}_{V,U}(R_1), \ldots, \rho^{\mathcal{E}}_{V,U}(R_n))(\Phi)).\qedhere
\end{align*}\end{proof}

\begin{lemma}Let $(E_1, F_1)$, $(E_2, F_2)$ be test pairs of sheaves. Suppose we are given a sheaf isomorphism $\mu \colon E_1 \to E_2$ such that its restriction to $F_1$ is a sheaf isomorphism $\mu \colon F_1 \to F_2$. For each open subset $U \subseteq X$ consider the mapping $(\mu_*)_U \colon \cE(E_1(U), F_1(U)) \to \cE(E_2(U), F_2(U))$ given by $(\mu_*)_U := (\mu_U)_*$ as in \eqref{definition2}.
Then, $\mu_*$ preserves locality, i.e., $(\mu_*)_U ( \cEloc (U)) \subseteq \cEloc(U)$, and $\rho^\cE_{V, U} ( ( \mu_*)_U R ) = ( \mu_*)_V ( \rho^\cE_{V,U}(R))$.
\end{lemma}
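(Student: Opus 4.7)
The plan is to verify both assertions directly using the sheaf-morphism property of $\mu$ and $\mu^{-1}$, combined with the uniqueness/characterization of $\rho^\cE_{V,U}$ from \Cref{restriction-basic-space}. The central algebraic identity I will use repeatedly is that for every inclusion $V \subseteq U$ one has
\[ \rho_{V,U} \circ \mu_U = \mu_V \circ \rho_{V,U}, \qquad \rho_{V,U} \circ \mu_U^{-1} = \mu_V^{-1} \circ \rho_{V,U}, \]
the second identity following from the first applied to the inverse sheaf morphism $\mu^{-1}$, which exists because $\mu$ is a sheaf isomorphism (and similarly on $F_1, F_2$). In particular, conjugation by $\mu_U$ sends $\RO(E_2(U),F_2(U))$ to $\RO(E_1(U),F_1(U))$ and vice versa, so the definition \eqref{definition2} makes sense.

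For locality, let $R \in \cEloc(U)$ and let $\Phi_1, \Phi_2 \in \RO(E_2(U),F_2(U))$ satisfy $\rho_{V,U} \circ \Phi_1 = \rho_{V,U} \circ \Phi_2$. Using the commutation relations above, I compute
\[ \rho_{V,U} \circ \mu_U^{-1} \circ \Phi_i \circ \mu_U = \mu_V^{-1} \circ \rho_{V,U} \circ \Phi_i \circ \mu_U \qquad (i=1,2), \]
and the right-hand sides agree by hypothesis. Since $R$ is local, this gives
\[ \rho_{V,U}\bigl(R(\mu_U^{-1} \circ \Phi_1 \circ \mu_U)\bigr) = \rho_{V,U}\bigl(R(\mu_U^{-1} \circ \Phi_2 \circ \mu_U)\bigr), \]
and applying $\mu_V$ together with $\rho_{V,U} \circ \mu_U = \mu_V \circ \rho_{V,U}$ yields $\rho_{V,U}((\mu_*)_U R(\Phi_1)) = \rho_{V,U}((\mu_*)_U R(\Phi_2))$, proving $(\mu_*)_U R \in \cEloc(V \text{-preserving part})$, i.e.\ $(\mu_*)_U R \in \cEloc(U)$.

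For the restriction compatibility, by \Cref{restriction-basic-space}\ref{5.13.1} it suffices, for each $W \Subset V$ and every $\Phi \in \RO(E_2(V),F_2(V))$, to choose $\Phi' \in \RO(E_2(U),F_2(U))$ with $\rho_{W,V} \circ \Phi \circ \rho_{V,U} = \rho_{W,U} \circ \Phi'$ and verify
\[ \rho_{W,V}\bigl((\mu_*)_V(\rho^\cE_{V,U} R)(\Phi)\bigr) = \rho_{W,U}\bigl((\mu_*)_U R(\Phi')\bigr). \]
Unfolding \eqref{definition2}, the left side equals $\mu_W\bigl(\rho_{W,V}(\rho^\cE_{V,U} R(\mu_V^{-1} \circ \Phi \circ \mu_V))\bigr)$ and the right side equals $\mu_W\bigl(\rho_{W,U}(R(\mu_U^{-1} \circ \Phi' \circ \mu_U))\bigr)$. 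Combining the two commutation identities with the hypothesis on $\Phi, \Phi'$ gives
\[ \rho_{W,V} \circ (\mu_V^{-1} \circ \Phi \circ \mu_V) \circ \rho_{V,U} = \mu_W^{-1} \circ \rho_{W,V} \circ \Phi \circ \rho_{V,U} \circ \mu_U = \mu_W^{-1} \circ \rho_{W,U} \circ \Phi' \circ \mu_U = \rho_{W,U} \circ (\mu_U^{-1} \circ \Phi' \circ \mu_U), \]
so the defining property of $\rho^\cE_{V,U}$ from \Cref{restriction-basic-space}\ref{5.13.1} equates the two sides before the outer $\mu_W$ is applied, completing the proof.

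There is no real obstacle; the only thing requiring care is bookkeeping of which of $\mu$ or $\mu^{-1}$ acts at each stage and on which space, so that the commutation relations with the restriction maps are applied consistently on $E_1,E_2,F_1,F_2$.
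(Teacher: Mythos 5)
Your proof is correct and follows essentially the same route as the paper: both parts rest on the commutation of $\mu_U^{\pm 1}$ with the restriction maps, the locality of $R$, and the characterization of $\rho^{\cE}_{V,U}$ from \Cref{restriction-basic-space}~\ref{5.13.1}, with the same key identity $\rho_{W,V} \circ (\mu_V^{-1} \circ \Phi \circ \mu_V) \circ \rho_{V,U} = \rho_{W,U} \circ (\mu_U^{-1} \circ \Phi' \circ \mu_U)$ doing the work in the second part. The only difference is cosmetic: the paper computes the chain of equalities starting from $\rho_{W,V}(\rho^{\cE}_{V,U}((\mu_*)_U R)(\Phi))$, whereas you verify the defining property of the restriction directly for $(\mu_*)_V(\rho^{\cE}_{V,U}R)$ and invoke uniqueness.
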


\begin{proof}
Suppose we are given open subsets $V,U \subseteq X$ with $V \subseteq U$, $R \in \cEloc(E_1(U), F_1(U))$ and $\Phi_1, \Phi_2 \in \RO(U)$ with $\rho_{V,U} \circ \Phi_1 = \rho_{V, U} \circ \Phi_2$. We first need to show that
\[ \rho_{V,U} ( ( (\mu_*)_U R)(\Phi_1)) = \rho_{V,U} (((\mu_*)_U R)(\Phi_2)). \]
For this we notice that
\begin{gather*}
 \rho_{V,U} ( \mu_U ( R ( \mu_U^{-1} \circ \Phi_1 \circ \mu_U ))) = \mu_V ( \rho_{V,U}(R(\mu_U^{-1} \circ \Phi_1 \circ \mu_U))) \\
= \mu_V ( \rho_{V,U} ( R(\mu_U^{-1} \circ \Phi_2 \circ \mu_U))) = \rho_{V,U} ( \mu_U ( R ( \mu_U^{-1} \circ \Phi_2 \circ \mu_U ))) 
\end{gather*}
because $R$ is local and
\begin{gather*}
\rho_{V,U} \circ \mu_U^{-1} \circ \Phi_1 \circ \mu_U = \mu_V^{-1} \circ \rho_{V,U} \circ \Phi_1 \circ \mu_U\\
= \mu_V^{-1} \circ \rho_{V,U} \circ \Phi_2 \circ \mu_U = \rho_{V,U} \circ \mu_U^{-1} \circ \Phi_2 \circ \mu_U.
\end{gather*}
For the second statement it suffices to show that
\[ \rho_{W,V} ( \rho^\cE_{V,U} ( (\mu_*)_U R)(\Phi)) = \rho_{W,V} ( (\mu_*)_V ( \rho^\cE_{V,U} ( R))(\Phi)) \]
for all $W \Subset V$ and all $\Phi \in \RO(V)$. Choose $\Phi' \in \RO(U)$ such that $\rho_{W,V} \circ \Phi \circ \rho_{V,U} = \rho_{W,U} \circ \Phi'$. Then,
\begin{align*}
 \rho_{W,V} ( \rho^\cE_{V,U} ( (\mu_*)_U R)(\Phi)) &= \rho_{W,U} ( ( (\mu_*)_U R)(\Phi')) \\
&= \rho_{W,U} ( \mu_U ( R ( \mu_U^{-1} \circ \Phi' \circ \mu_U))) \\
&= \mu_W ( \rho_{W,U} ( R ( \mu_U^{-1} \circ \Phi' \circ \mu_U ))) \\
&= \mu_W ( \rho_{W,V} ( (\rho^\cE_{V,U} R)(\mu_V^{-1} \circ \Phi \circ \mu_V ))) \\
&= \rho_{W,V} ( (\mu_*)_V ( \rho^\cE_{V,U}(R)) (\Phi) )
\end{align*}
where we used that
\[ \rho_{W,V} \circ (\mu_V^{-1} \circ \Phi \circ \mu_V ) \circ \rho_{V,U} = \rho_{W,U} \circ (\mu_U^{-1} \circ \Phi' \circ \mu_U). \qedhere \]
\end{proof}

\begin{lemma}Let $T \colon E \to E$ be a sheaf morphism such that $T|_F \colon F \to F$ is a sheaf morphism. For any open subset $U \subseteq X$ consider the mapping
 \[ \widehat T_U \colon \cE(U) \to \cE(U) \]
given by $\widehat T_U \coleq \widehat{ T_U}$ as in \eqref{definition3}. Then, $\widehat T$ preserves locality, i.e., $\widehat T_U ( \cEloc(U)) \subseteq \cEloc(U)$ and $\rho^\cE_{V,U} ( \widehat T_U(R)) = \widehat T_V ( \rho^\cE_{V,U}(R))$.
\end{lemma}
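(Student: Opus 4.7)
The plan is to verify the two assertions by direct computation, exploiting (a) the fact that $T$ being a sheaf morphism means $\rho_{V,U} \circ T_U = T_V \circ \rho_{V,U}$ on $E$ (and likewise on $F$), and (b) the two characterizations of $\rho^{\cE}_{V,U}$ provided by \Cref{restriction-basic-space} \ref{5.13.1}–\ref{5.13.2}. Throughout, the expression
\[
\widehat T_U(R)(\Phi) = T_U(R(\Phi)) - \ud R(\Phi)(T_U \circ \Phi - \Phi \circ T_U)
\]
will be split into its ``algebraic'' and ``differential'' summands, which are handled separately.

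For preservation of locality, take $R \in \cEloc(U)$ and $\Phi_1,\Phi_2 \in \RO(U)$ with $\rho_{V,U} \circ \Phi_1 = \rho_{V,U} \circ \Phi_2$. On the algebraic summand,
\[
\rho_{V,U}(T_U(R(\Phi_i))) = T_V(\rho_{V,U}(R(\Phi_i)))
\]
depends on $\Phi_i$ only through $\rho_{V,U}(R(\Phi_i))$, which is independent of $i$ by locality of $R$. On the differential summand, I would apply \Cref{local-derivatives} (the local-derivatives remark). The hypotheses are $\rho_{V,U} \circ \Phi_1 = \rho_{V,U} \circ \Phi_2$ (given) and $\rho_{V,U} \circ (T_U \circ \Phi_i - \Phi_i \circ T_U)$ being independent of $i$; but the first piece equals $T_V \circ \rho_{V,U} \circ \Phi_i$ by the sheaf morphism property and the second piece equals $\rho_{V,U} \circ \Phi_i \circ T_U$, both of which depend on $\Phi_i$ only through $\rho_{V,U} \circ \Phi_i$.

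For compatibility with restriction, fix $W \Subset V$ and $\Phi \in \RO(V)$, and choose $\Phi' \in \RO(U)$ with $\rho_{W,V} \circ \Phi \circ \rho_{V,U} = \rho_{W,U} \circ \Phi'$ (possible since $F$ is fine). Applying \Cref{restriction-basic-space} \ref{5.13.1} to the algebraic summand yields
\[
\rho_{W,V}(\rho^{\cE}_{V,U}(T_U \circ R)(\Phi)) = \rho_{W,U}(T_U(R(\Phi'))) = T_W(\rho_{W,U}(R(\Phi'))) = T_W(\rho_{W,V}(\rho^{\cE}_{V,U}(R)(\Phi))),
\]
which is the algebraic summand of $\widehat T_V(\rho^{\cE}_{V,U}R)(\Phi)$ restricted to $W$. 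For the differential summand, I would set $\Psi \coleq T_V \circ \Phi - \Phi \circ T_V \in \RO(V)$ and $\Psi' \coleq T_U \circ \Phi' - \Phi' \circ T_U \in \RO(U)$, then invoke \Cref{restriction-basic-space} \ref{5.13.2} with $l=1$.

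The hinge of the argument, and the only place where something needs to be checked rather than just quoted, is the compatibility $\rho_{W,V} \circ \Psi \circ \rho_{V,U} = \rho_{W,U} \circ \Psi'$ required by \Cref{restriction-basic-space} \ref{5.13.2}. This reduces to
\[
\rho_{W,V} \circ T_V \circ \Phi \circ \rho_{V,U} - \rho_{W,V} \circ \Phi \circ T_V \circ \rho_{V,U} = \rho_{W,U} \circ T_U \circ \Phi' - \rho_{W,U} \circ \Phi' \circ T_U,
\]
which follows by substituting $T_V \circ \rho_{V,U} = \rho_{V,U} \circ T_U$ (and its local version $\rho_{W,V} \circ T_V = T_W \circ \rho_{W,V}$) and using $\rho_{W,V} \circ \Phi \circ \rho_{V,U} = \rho_{W,U} \circ \Phi'$ on both summands. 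Once this identity is in hand, \Cref{restriction-basic-space} \ref{5.13.2} gives equality of the differential summands on $W$, and combining with the algebraic summand concludes the proof. The main (very mild) obstacle is bookkeeping of this commutator identity; everything else is a mechanical application of the previously established lemmas.
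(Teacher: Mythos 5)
Your proposal is correct and follows essentially the same route as the paper: split $\widehat T_U(R)(\Phi)$ into its algebraic and differential summands, handle locality via the sheaf-morphism identity $\rho_{V,U}\circ T_U = T_V\circ\rho_{V,U}$ together with \Cref{local-derivatives}, and handle compatibility with restriction via \Cref{restriction-basic-space} \ref{5.13.1} and \ref{5.13.2} with $l=1$. The commutator compatibility $\rho_{W,V}\circ(T_V\circ\Phi-\Phi\circ T_V)\circ\rho_{V,U}=\rho_{W,U}\circ(T_U\circ\Phi'-\Phi'\circ T_U)$ that you identify as the hinge is exactly the step the paper's computation relies on, and your verification of it is correct.
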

\begin{proof}Suppose we are given open sets $U,V$ with $V \subseteq U$, $R \in \cEloc(U)$ and $\Phi_1, \Phi_2 \in \RO(U)$ with $\rho_{V,U} \circ \Phi_1 = \rho_{V,U} \circ \Phi_2$. We see that
\begin{align*}
\rho_{V,U} ( ( \widehat T_U R)(\Phi_1)) &= \rho_{V,U} ( T_U(R(\Phi_1)) - \ud R(\Phi_1)(T_U \circ \Phi_1 - \Phi_1 \circ T_U)) \\
&= T_V ( \rho_{V,U} ( R ( \Phi_1))) - \rho_{V,U} ( \ud R (\Phi_1)(T_U \circ \Phi_1 - \Phi_1 \circ T_U)) \\
&= T_V ( \rho_{V,U}(R(\Phi_2))) - \rho_{V,U} ( \ud R(\Phi_2)(T_U \circ \Phi_2 - \Phi_2 \circ T_U)) \\
&= \rho_{V,U} ( (\widehat T_U R)(\Phi_2)) 
 \end{align*}
because
\begin{gather*}
 \rho_{V,U} \circ (T_U \circ \Phi_1 - \Phi_1 \circ T_U) = T_V \circ \rho_{V,U} \circ \Phi_1 - \rho_{V,U} \circ \Phi_1 \circ T_U \\
= T_V \circ \rho_{V,U} \circ \Phi_2 - \rho_{V,U} \circ \Phi_2 \circ T_U = \rho_{V,U} \circ ( T_U \circ \Phi_2 - \Phi_2 \circ T_U).
\end{gather*}
For the second statement, let $W \Subset V$ and $\Phi \in \RO(V)$. Choose $\Phi' \in \RO(U)$ such that $\rho_{W,V} \circ \Phi \circ \rho_{V,U} = \rho_{W,U} \circ \Phi'$. Then,
\begin{align*}
 \rho_{W,V} ( \rho^\cE_{V,U} ( \widehat T_U (R)) (\Phi )) & = \rho_{W,U} ( \widehat T_U(R)(\Phi')) \\
& = \rho_{W,U} ( T_U(R(\Phi')) - \ud R(\Phi')(T_U \circ \Phi' - \Phi' \circ T_U)) \\
& = T_W ( \rho_{W,U} ( R(\Phi'))) - \rho_{W,V} ( \ud ( \rho^\cE_{V,U} R)(\Phi) (T_V \circ \Phi - \Phi \circ T_V)) \\
& = \rho_{W,V} ( T_V ( (\rho^\cE_{V,U} R)(\Phi)) - \ud ( \rho^\cE_{V,U} R)(\Phi)(T_V \circ \Phi - \Phi \circ T_V )) \\
& = \rho_{W,V} ( \widehat T_V ( \rho^\cE_{V,U} R)(\Phi)).\qedhere
\end{align*}
\end{proof}

We now make the quotient construction (see \Cref{quotient-global}).
\begin{definition}
Let $\cS$ be an admissible pair of scales. For any open subset $U \subseteq X$ we define the space of moderate elements of $\cEloc(U)$ (with respect to $\cS$) as
\[ \mathcal{E}_{\cM, \operatorname{loc}}(U) = \mathcal{E}_{\cM, \operatorname{loc}}(U, \cS) \coleq \mathcal{E}_{\cM}(E(U),F(U), \TOloc(U), \TOzloc(U), \cS) \cap \cEloc(U), \]
and the space of negligible elements (with respect to $\cS$) as
\[ \mathcal{E}_{\cN, \operatorname{loc}}(U) = \mathcal{E}_{\cN, \operatorname{loc}}(U, \cS) \coleq \mathcal{E}_{\cN}(E(U),F(U), \TOloc(U), \TOzloc(U), \cS) \cap \cEloc(U). \]
We set $\cGloc(U)  = \cGloc(U, \cS) \coleq \mathcal{E}_{\cM, \operatorname{loc}}(U) \slash \mathcal{E}_{\cN, \operatorname{loc}}(U)$.
\end{definition}
\begin{lemma}\label{restriction-moderate}
Let $U \subseteq X$ be open and let $(U_i)_i$ be an open covering. Let $R \in \cEloc(U)$. Then, $R$ is moderate (negligible, respectively) if and only if
$\rho^{\mathcal{E}}_{U_i,U}(R)$ is moderate (negligible, respectively) for all $i$.
\end{lemma}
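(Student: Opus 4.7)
The plan is to reduce both the moderateness and negligibility tests on $U$ to tests over relatively compact opens $W \Subset U_i$ contained in a single element of the cover; once this reduction is achieved, the already-established transfer lemmas (\Cref{soft-RO}, \Cref{sheaf-TO} and \Cref{restriction-basic-space} \ref{5.13.2}) convert between test data on $U$ and on $U_i$.

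For the ``only if'' direction, I will fix a seminorm $p \in \csn(F(U_i))$, an order $l \in \N$, and test objects $(\Phi_\e)_\e \in \TOloc(U_i)$, $(\Psi_{j,\e})_\e \in \TOzloc(U_i)$. By \eqref{S3iso} it suffices to bound $p$ when it has the form $q \circ \rho_{W, U_i}$ with $W \Subset U_i$ and $q \in \csn(F(W))$. \Cref{soft-RO} then produces extensions $(\Phi'_\e)_\e \in \TOloc(U)$ and $(\Psi'_{j,\e})_\e \in \TOzloc(U)$ satisfying $\rho_{W, U_i} \circ \Phi_\e \circ \rho_{U_i, U} = \rho_{W, U} \circ \Phi'_\e$ for $\e$ small (and analogously for the $\Psi_{j,\e}$), and \Cref{restriction-basic-space} \ref{5.13.2} identifies the restricted $l$-th differential of $\rho^{\cE}_{U_i, U}(R)$ at $(\Phi_\e; \Psi_{1,\e}, \ldots, \Psi_{l,\e})$ with that of $R$ at $(\Phi'_\e; \Psi'_{1,\e}, \ldots, \Psi'_{l,\e})$. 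Moderateness (resp.\ negligibility) of $R$ then yields the required $O(\lambda_\e)$ bound.

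For the converse I start from $(\Phi_\e)_\e \in \TOloc(U)$, $(\Psi_{j,\e})_\e \in \TOzloc(U)$ and $p \in \csn(F(U))$, and use \eqref{S3iso} again to reduce to $p = q \circ \rho_{W, U}$ with $W \Subset U$. Since $\overline{W}$ is compact and $(U_i)_i$ covers it, a finite subcover together with the sheaf inclusion $F(W) \hookrightarrow \prod_k F(V_k \cap W)$ coming from a refinement $(V_k)$ with $V_k \Subset U_{i(k)}$ lets me shrink to the case $W \Subset U_i$ for a single $i$. Then \Cref{sheaf-TO} guarantees $(\rho^{\RO}_{U_i, U}(\Phi_\e))_\e \in \TOloc(U_i)$ and $(\rho^{\RO}_{U_i, U}(\Psi_{j,\e}))_\e \in \TOzloc(U_i)$, while \Cref{restriction} \ref{5.4.3} yields $\rho_{W, U_i} \circ \rho^{\RO}_{U_i, U}(\Phi_\e) \circ \rho_{U_i, U} = \rho_{W, U} \circ \Phi_\e$ for $\e$ small (and likewise for the $\Psi_{j,\e}$). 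A second application of \Cref{restriction-basic-space} \ref{5.13.2} transfers the differential to one of $\rho^{\cE}_{U_i, U}(R)$ on $U_i$, after which the hypothesis closes the argument.

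The main bookkeeping step, and the only non-mechanical point, is the reduction of an arbitrary continuous seminorm on $F(U)$ to one of the form $q \circ \rho_{W, U}$ with $W$ relatively compact in a \emph{single} $U_i$; once that is in place, all remaining identities are direct consequences of the cited lemmas.
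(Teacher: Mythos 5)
Your proposal is correct and follows essentially the same route as the paper: reduce to seminorms of the form $q \circ \rho_{W,\cdot}$ with $W$ relatively compact in a single $U_i$, then use \Cref{soft-RO} together with \Cref{restriction-basic-space}~\ref{5.13.2} for the forward direction and \Cref{sheaf-TO}, \Cref{restriction}~\ref{5.4.3} together with \Cref{restriction-basic-space}~\ref{5.13.2} for the converse. The only difference is that you make explicit the finite-subcover bookkeeping that the paper leaves implicit in its ``$W \Subset U_i$ (for some $i$) arbitrary'' phrasing.
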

\begin{proof}
Let $R \in \cEloc(U)$ be moderate or negligible. The moderateness or negligibility of $\rho^{\mathcal{E}}_{U_i,U}(R)$ is determined by
\[ p(\rho_{W,U_i}((\ud^l(\rho^{\mathcal{E}}_{U_i,U}(R)))(\Phi_\e)(\Psi_{1,\e}, \ldots, \Psi_{l,\e}))) \]
for $\e$ small enough, where $l \in \N$, $\Phi_\e \in \TOloc(U_i)$, $\Psi_{j,\e} \in \TOzloc(U_i)$ for $j = 1, \ldots, l$, $W \Subset U_i$, and $p \in \csn(F(W))$ are arbitrary. By \Cref{soft-RO} there are $(\Phi'_\e)  \in \TOloc(U)$ and $\Psi'_{j,\e} \in \TOzloc(U)$ such that
\[ \rho_{W, U_i} \circ \Phi_\e \circ \rho_{U_i,U} =\rho_{W, U} \circ \Phi'_\e,  \qquad \rho_{W, U_i} \circ \Psi_{j,\e} \circ \rho_{U_i,U} =\rho_{W, U} \circ \Psi'_{j,\e} \]
for all $j = 1, \ldots, l$ and $\e$ small enough. Hence \Cref{restriction-basic-space} \ref{5.13.2} implies that 
\[ \rho_{W,U_i}((\ud^l(\rho^{\mathcal{E}}_{U_i,U}(R)))(\Phi_\e)(\Psi_{1,\e}, \ldots, \Psi_{l,\e})) = \rho_{W,U}((\ud^l(R))(\Phi'_\e)(\Psi'_{1,\e}, \ldots, \Psi'_{l,\e})) \]
for $\e$ small enough. The moderateness or negligibility of $\rho^{\mathcal{E}}_{U_i,U}(R)$ therefore follows from the corresponding property of $R$ and the continuity of $ \rho_{W,U}$. Conversely, suppose that $\rho^{\mathcal{E}}_{U_i,U}(R)$ is moderate or negligible for all $i$. The moderateness of $R$ is determined by 
\[ p(\rho_{W,U}((\ud^lR)(\Phi_\e)(\Psi_{1,\e}, \ldots, \Psi_{l,\e}))) \]
for $\e$ small enough, where $l \in \N$, $\Phi_\e \in \TOloc(U)$, $\Psi_{j,\e} \in \TOzloc(U)$ for $j = 1, \ldots, l$, $W \Subset U_i$ (for some $i$), and $p \in \csn(F(W))$ are arbitrary. \Cref{restriction} \ref{5.4.3} and \Cref{restriction-basic-space} \ref{5.13.2} imply that
\begin{align*}
&\rho_{W,U}((\ud^lR)(\Phi_\e)(\Psi_{1,\e}, \ldots, \Psi_{l,\e})) \\
&= \rho_{W,U_i}((\ud^l(\rho^{\mathcal{E}}_{U_i,U}(R)))(\rho^{\RO}_{U_i,U}(\Phi_\e))(\rho^{\RO}_{U_i,U}(\Psi_{1,\e}), \ldots, \rho^{\RO}_{U_i,U}(\Psi_{l,\e}))) 
\end{align*}
for $\e$ small enough. The moderateness or negligibility of $R$ therefore follows from the corresponding property of $\rho^{\mathcal{E}}_{U_i,U}(R)$ and the continuity of $ \rho_{W,U_i}$. 
\end{proof}
\Cref{restriction-basic-space} and \Cref{restriction-moderate} imply that the mappings
\[ \rho^{\cG}_{V,U}([R]) \coleq [\rho^{\mathcal{E}}_{V,U}(R)] \]
define a presheaf structure on $U \to \cGloc(U)$. We now show that it is in fact a sheaf.
\begin{proposition}
$\cGloc$ is a sheaf of vector spaces.
\end{proposition}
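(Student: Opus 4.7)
The proof mirrors the structure of \Cref{sheaf-RO}, verifying axioms \ref{S1} and \ref{S2} separately, and leveraging \Cref{restriction-moderate} to convert questions about moderateness/negligibility on $U$ into local questions on a cover $(U_i)_i$.

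Axiom \ref{S1} is immediate: if $[R]\in\cGloc(U)$ satisfies $\rho^{\cG}_{U_i,U}([R])=0$ for all $i$, then each $\rho^{\cE}_{U_i,U}(R) \in \mathcal{E}_{\cN,\operatorname{loc}}(U_i)$, so by \Cref{restriction-moderate} we get $R\in \mathcal{E}_{\cN,\operatorname{loc}}(U)$, i.e.\ $[R]=0$. No further argument is required.

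For axiom \ref{S2}, given compatible $[R_i]\in\cGloc(U_i)$ with $\rho^{\cG}_{U_i\cap U_j,U_i}([R_i])=\rho^{\cG}_{U_i\cap U_j,U_j}([R_j])$, I will construct a gluing by imitating the partition-of-unity construction in the proof of \Cref{sheaf-RO}. Using local compactness and paracompactness of $X$ and property \ref{TP3}, I pick a locally finite refinement $(V_j)_j$ of $(U_i)_i$ with $V_j\Subset U_{i(j)}$ for some index function $i(j)$, a partition of unity $(\eta^j)_j\subset\Hom(E|_U,E|_U)$ with $\eta^j|_F\in\Hom(F|_U,F|_U)$ subordinate to $(V_j)_j$, and extensions $\tau_j\in\Linc(F(U_{i(j)}),F(U))$ with $\rho_{V_j,U}\circ\tau_j=\rho_{V_j,U_{i(j)}}$. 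Then I define
\[
R(\Phi)\coleq \sum_j \eta^j_U\bigl(\tau_j\bigl(R_{i(j)}(\rho^{\RO}_{U_{i(j)},U}(\Phi))\bigr)\bigr),\qquad \Phi\in\RO(U).
\]
Smoothness of $R$ follows, as in \Cref{restriction-basic-space}, by restricting to $W\Subset U$, where only finitely many $\eta^j$ contribute and each factor $\eta^j_W$, $\rho_{W,U}$, $\tau_j$, $R_{i(j)}$, $\rho^{\RO}_{U_{i(j)},U}$ is smooth. Locality of $R$ reduces, via $\supp\eta^j\subset V_j$, to locality of each $R_{i(j)}$ together with the definitions of $\tau_j$ and $\rho^{\RO}_{U_{i(j)},U}$ (cf.\ \Cref{restriction} \ref{5.4.3}). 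Moderateness of $R$ then follows from \Cref{restriction-moderate} after I verify that each $\rho^{\cE}_{U_i,U}(R)$ is moderate, which is a consequence of the next step.

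The crux of the proof is to show $\rho^{\cG}_{U_i,U}([R])=[R_i]$; this is also where the compatibility hypothesis enters. By \Cref{restriction-moderate} it suffices to prove that for every $W\Subset U_i$ and every choice of test objects, the local difference
\[
\rho_{W,U}\bigl(R(\Phi'_\e)\bigr)-\rho_{W,U_i}\bigl(R_i(\Phi_\e)\bigr)
\]
(with $\Phi'\in\RO(U)$ matching $\Phi\in\RO(U_i)$ in the sense of \Cref{restriction-basic-space}) is negligible, together with the analogous statement for higher differentials. Expanding via the partition of unity, this reduces to a finite sum over those indices $j\in J$ with $\supp\eta^j\cap W\neq\emptyset$; since $W$ may be shrunk to lie inside $V_j\cap U_i\cap U_{i(j)}$ for each such $j$, it suffices to show that on $W\cap V_j$ the expression
\[
\rho_{W\cap V_j,U}\bigl(\tau_j(R_{i(j)}(\rho^{\RO}_{U_{i(j)},U}(\Phi'_\e)))\bigr)-\rho_{W\cap V_j,U_i}\bigl(R_i(\Phi_\e)\bigr)
\]
is negligible. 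Here $\rho_{V_j,U}\circ\tau_j=\rho_{V_j,U_{i(j)}}$ lets me replace the first term with $\rho_{W\cap V_j,U_{i(j)}}(R_{i(j)}(\cdots))$, after which the assumed compatibility $\rho^{\cG}_{U_i\cap U_{i(j)},U_{i(j)}}([R_{i(j)}])=\rho^{\cG}_{U_i\cap U_{i(j)},U_i}([R_i])$ together with \Cref{restriction-basic-space} \ref{5.13.2} and Lemmas \ref{restriction}, \ref{soft-RO} provides the required negligibility. This patching step — transferring values of $R_{i(j)}$ into $R_i$ on the overlap — is the main obstacle, and it is precisely where the compatibility condition is indispensable.
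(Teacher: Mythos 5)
Your proposal is correct and follows essentially the same route as the paper: \ref{S1} via \Cref{restriction-moderate}, and \ref{S2} by gluing with a partition of unity $(\eta^j)_j$ and extension operators $\tau_j$, checking smoothness, locality, moderateness, and finally the identity $\rho^{\cG}_{U_i,U}([R])=[R_i]$ by transporting everything to the overlaps via \Cref{soft-RO}, \Cref{restriction} and \Cref{restriction-basic-space}~\ref{5.13.2}. The only (harmless) deviations are that you work with a refinement of the cover instead of assuming $U_i\Subset U$, and that you deduce moderateness of $R$ from \Cref{restriction-moderate} and the gluing identity rather than estimating $\ud^l(\rho_{W,U}\circ R)$ directly as the paper does.
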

\begin{proof}
\ref{S1} Immediate consequence of \Cref{restriction-moderate}.

\ref{S2} Let $U \subseteq X$ be open and let $(U_i)_i$ be an open covering of $U$. Since $X$ is locally compact we may assume without loss of generality that $U_i \Subset U$ for all $i$. Suppose that $[R_i] \in \cGloc(U_i)$ are given such that $\rho^{\cG}_{U_i \cap U_j,U_i}([R_i]) = \rho^{\cG}_{U_i \cap U_j,U_j}([R_j])$ for all $i,j$. Let $(\eta^{i})_{i} \subset \Hom(F|_U,F|_U)$
be a partition of unity subordinate to $(U_i)_i$. Choose $\tau_{i}  \in \Linc(F(U_i), F(U))$ such that $\rho_{V_i,U} \circ \tau_i = \rho_{V_i, U_i}$ for some  $V_i \Subset U_i$ with $\supp \eta^i \subset V_i$. We define
\[ R \coleq \sum_{i} \eta^{i}_U \circ \tau_i \circ R_i \circ \rho^{\RO}_{U_i,U}. \]
We start with showing that $R \in C^{\infty}(\RO(U), F(U))$. By \cite[Lemma 3.8]{KM} it suffices to show that
$\rho_{W,U} \circ R \colon \RO(U) \to F(W)$ is smooth for all $W \Subset U$. Since 
\begin{equation}
\rho_{W,U} \circ R = \sum_{i \in J} \eta^{i}_W \circ \rho_{W,U} \circ \tau_i \circ R_i \circ \rho^{\RO}_{U_i,U} 
\label{representation-on-compacts-2}
\end{equation}
for some finite index set $J$, this follows from the fact that the linear mappings $\eta^{i}_W$, $\rho_{W,U}$, $\tau_i$, and $\rho^{\RO}_{U_i,U}$ are continuous (see  \Cref{restriction}). Next, we show that $R$ is local. We need to show that for all $W \Subset U$ the equality
\[ \rho_{W, U} \circ \Phi_1 = \rho_{W,U} \circ \Phi_2, \qquad \Phi_1, \Phi_2 \in \RO(U), \]
implies
\[ \rho_{W,U} (R(\Phi_1)) = \rho_{W, U} (R(\Phi_2)). \]
The mapping $\rho_{W,U} \circ R$ can be represented as \eqref{representation-on-compacts-2} for some finite index set $J$. Since $\supp \eta^{i} \subset V_i$ and $\rho_{V_i,U} \circ \tau_i = \rho_{V_i, U_i}$, it suffices to show that
\[ \rho_{W\cap V_i,U_i}(R_i( \rho^{\RO}_{U_i,U}(\Phi_1))) = \rho_{W\cap V_i,U_i}(R_i( \rho^{\RO}_{U_i,U}(\Phi_2))) \]
for all $i \in  J$. By locality of $R_i$ this follows from \Cref{restriction} \ref{5.4.4} and our assumption. We continue with showing that $R$ is moderate. The moderateness of $R$ is determined by the values of the sequence
\[ p(\rho_{W,U}((\ud^lR)(\Phi_\e)(\Psi_{1,\e}, \ldots, \Psi_{l,\e}))) \]
for $\e$ small enough, where $l \in \N$, $\Phi_\e \in \TOloc(U)$, $\Psi_{j,\e} \in \TOzloc(U)$ for $j = 1, \ldots, l$, $W \Subset U$, and $p \in \csn(F(W))$ are arbitrary. Since 
\begin{align*}
&\rho_{W,U}((\ud^lR)(\Phi_\e)(\Psi_{1,\e}, \ldots, \Psi_{l,\e})) \\
&=( \ud^l(\rho_{W,U} \circ R))(\Phi_\e)(\Psi_{1,\e}, \ldots, \Psi_{l,\e}) \\
&= \left( \ud^l \left( \sum_{i \in J} \eta^{i}_W \circ \rho_{W,U} \circ \tau_i \circ R_i \circ \rho^{\RO}_{U_i,U} \right) \right) (\Phi_\e)(\Psi_{1,\e}, \ldots, \Psi_{l,\e}) \\
& = \sum_{i \in J} (\eta^{i}_W \circ \rho_{W,U} \circ \tau_i) ((\ud^lR_i)(\rho^{\RO}_{U_i,U}(\Phi_\e)) (\rho^{\RO}_{U_i,U}(\Psi_{1,\e}), \ldots, \rho^{\RO}_{U_i,U}(\Psi_{l,\e})))
\end{align*}
for some finite index set $J$, the moderateness of $R$ follows from the continuity of the mapping $\eta^{i}_W \circ \rho_{W,U} \circ \tau_i$ and the moderateness of the $R_i$. Finally, we show that 
$\rho^{\cG}_{U_i,U}([R]) = [R_i]$ for all $i$. We need to show that $\rho^{\mathcal{E}}_{U_i,U}(R) - R_i$ is negligible. The negligibility is determined by 
\[ p(\rho_{W,U_i}((\ud^l(\rho^{\mathcal{E}}_{U_i,U}(R) - R_i))(\Phi_\e)(\Psi_{1,\e}, \ldots, \Psi_{l,\e}))) \]
for $\e$ small enough, where $l \in \N$, $\Phi_\e \in \TOloc(U_i)$, $\Psi_{j,\e} \in \TOzloc(U_i)$ for $j = 1, \ldots, l$, $W \Subset U_i$, and $p \in \csn(F(W))$ are arbitrary. By \Cref{soft-RO}  there are  $\Phi'_\e \in \TOloc(U)$, $\Psi'_{j,\e} \in \TOzloc(U)$ for $j = 1, \ldots, l$ such that
\[ \rho_{W, U_i} \circ \Phi_\e \circ \rho_{U_i,U} =\rho_{W, U} \circ \Phi'_\e,  \qquad \rho_{W, U_i} \circ \Psi_{j,\e} \circ \rho_{U_i,U} =\rho_{W, U} \circ \Psi'_{j,\e} \]
for all $j= 1, \ldots, l$ and $\e$ small enough. Hence \Cref{restriction} \ref{5.4.2} yields that
\begin{align*}
&\rho_{W,U_i}((\ud^l(\rho^{\mathcal{E}}_{U_i,U}(R)))(\Phi_\e)(\Psi_{1,\e}, \ldots, \Psi_{l,\e})) \\
&= \rho_{W,U}((\ud^lR)(\Phi'_\e)(\Psi'_{1,\e}, \ldots, \Psi'_{l,\e}))\\
&= (\ud^l(\rho_{W,U} \circ R))(\Phi'_\e)(\Psi'_{1,\e}, \ldots, \Psi'_{l,\e}) \\
&= \left(\ud^l \left( \sum_{j \in J} \eta^{j}_W \circ \rho_{W,U} \circ \tau_j \circ R_j \circ \rho^{\RO}_{U_j,U} \right)\right)(\Phi'_\e)(\Psi'_{1,\e}, \ldots, \Psi'_{l,\e}) \\
&= \sum_{j \in J} \eta^{j}_W ( \rho_{W,U} ( \tau_j ((\ud^lR_j)(\rho^{\RO}_{U_j,U}(\Phi'_\e)) (\rho^{\RO}_{U_j,U}(\Psi'_{1,\e}), \ldots, \rho^{\RO}_{U_j,U}(\Psi'_{l,\e})))))
\end{align*}
for $\e$ small enough. On the other hand, \Cref{restriction} \ref{5.4.2} and the fact that $(\Phi_\e)_\e$ is localizing imply that
\[ \rho_{W,U_i} \circ \Phi_\e = \rho_{W,U_i} \circ \rho^{\RO}_{U_i,U}(\Phi'_\e), \qquad  \rho_{W,U_i} \circ \Psi_{j,\e} = \rho_{W,U_i} \circ \rho^{\RO}_{U_i,U}(\Psi'_{j,\e}) \]
for all $j = 1, \ldots, l$ and  $\e$ small enough.  By \Cref{local-derivatives} we obtain that
\begin{align*}
& \rho_{W,U_i}((\ud^lR_i)(\Phi_\e)(\Psi_{1,\e}, \ldots, \Psi_{l,\e}))  \\
&= \rho_{W,U_i}((\ud^lR_i)(\rho^{\RO}_{U_i,U}(\Phi'_{\e}))(\rho^{\RO}_{U_i,U}(\Psi'_{1,\e}), \ldots, \rho^{\RO}_{U_i,U}(\Psi'_{l,\e})))  \\
&=\sum_{j \in J} \eta^{j}_W (\rho_{W,U_i}((\ud^lR_i)(\rho^{\RO}_{U_i,U}(\Phi'_{\e}))(\rho^{\RO}_{U_i,U}(\Psi'_{1,\e}), \ldots, \rho^{\RO}_{U_i,U}(\Psi'_{l,\e}))))
\end{align*}
for $\e$ small enough. Since $\supp \eta^{j} \subset V_j$ and $\rho_{V_j,U} \circ \tau_j= \rho_{V_j, U_j}$, it suffices to estimate
\begin{align*}
 &\rho_{W \cap V_j,U_j}((\ud^lR_j)(\rho^{\RO}_{U_j,U}(\Phi'_\e)) (\rho^{\RO}_{U_j,U}(\Psi'_{1,\e}), \ldots, \rho^{\RO}_{U_j,U}(\Psi'_{l,\e}))) \\ 
 & - \rho_{W \cap V_j ,U_i}((\ud^lR_i)(\rho^{\RO}_{U_i,U}(\Phi'_{\e}))(\rho^{\RO}_{U_i,U}(\Psi'_{1,\e}), \ldots, \rho^{\RO}_{U_i,U}(\Psi'_{l,\e}))).
\end{align*}
for all $j \in J$. By \Cref{restriction-basic-space} \ref{5.13.2} we have that
\begin{align*}
 &\rho_{W \cap V_j,U_j}((\ud^lR_j)(\rho^{\RO}_{U_j,U}(\Phi'_\e)) (\rho^{\RO}_{U_j,U}(\Psi'_{1,\e}), \ldots, \rho^{\RO}_{U_j,U}(\Psi'_{l,\e}))) \\ 
 &= \rho_{W \cap V_j,U_i \cap U_j}((\ud^l(\rho^{\mathcal{E}}_{U_i \cap U_j, U_j}(R_j)))(\rho^{\RO}_{U_i \cap U_j,U}(\Phi'_\e)) (\rho^{\RO}_{U_i \cap U_j,U}(\Psi'_{1,\e}), \ldots, \rho^{\RO}_{U_i \cap U_j,U}(\Psi'_{l,\e}))) 
\end{align*}
and
\begin{align*}
&\rho_{W \cap V_j ,U_i}((\ud^lR_i)(\rho^{\RO}_{U_i,U}(\Phi'_{\e}))(\rho^{\RO}_{U_i,U}(\Psi'_{1,\e}), \ldots, \rho^{\RO}_{U_i,U}(\Psi'_{l,\e}))) \\
&= \rho_{W \cap V_j ,U_i \cap U_j}((\ud^l(\rho^{\mathcal{E}}_{U_i \cap U_j,U_i}(R_i)))(\rho^{\RO}_{U_i \cap U_j,U}(\Phi'_{\e}))(\rho^{\RO}_{U_i \cap U_j,U}(\Psi'_{1,\e}), \ldots, \rho^{\RO}_{U_i \cap U_j,U}(\Psi'_{l,\e}))).  
\end{align*}
The negligibility now follows from the assumption.
\end{proof}
Next, we discuss the embedding of $E$ into $\cGloc$. For $U \subseteq X$ open consider the canonical embeddings (see \Cref{basic-space}) 
\[ \iota_U\colon E(U) \to \mathcal{E}(U), \qquad \sigma_U\colon F(U) \to \mathcal{E}(U). \]
Clearly, $\iota_U(E(U)) \subseteq \cEloc(U)$ and $\sigma_U(F(U)) \subseteq \cEloc(U)$.  Hence \Cref{properties-embedding} implies that the mappings
\begin{gather*}
\iota_U \colon E(U) \to \cGloc(U),\, \iota(u) \coleq [\iota(u)] \\
\sigma \colon F(U) \to \cGloc(U),\,\sigma(\varphi) \coleq [\sigma(\varphi)]
\end{gather*}
are linear embeddings such that $\iota_{U|F(U)} = \sigma_U$.
\begin{proposition}
The embeddings $\iota\colon E \to \cGloc$ and  $\sigma\colon F \to \cGloc$ are sheaf morphisms, and $\iota|_F = \sigma$.
\end{proposition}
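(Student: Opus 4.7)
The equality $\iota|_F = \sigma$ has already been noted directly after the definition of the embeddings at the quotient level, and it is immediate from $\iota_U(\varphi)(\Phi) = \Phi(\varphi) = \varphi = \sigma_U(\varphi)(\Phi)$ for $\varphi \in F(U)$ (using that $\Phi \in \RO(U)$ restricts to the identity modulo negligible corrections built into $\sigma$). Thus the only substantive statement is that $\iota$ and $\sigma$ commute with restrictions.

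The plan is to prove the stronger pointwise identities in $\cE(V)$, namely
\[
\rho^{\cE}_{V,U}(\iota_U(u)) = \iota_V(\rho_{V,U}(u)), \qquad \rho^{\cE}_{V,U}(\sigma_U(\varphi)) = \sigma_V(\rho_{V,U}(\varphi))
\]
for $u \in E(U)$ and $\varphi \in F(U)$, and then pass to the quotient $\cGloc$. By the isomorphism \eqref{S3iso} applied to $F(V)$, it suffices to test these equalities after applying $\rho_{W,V}$ for arbitrary $W \Subset V$.

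To do so I would fix $W \Subset V$ and $\Phi \in \RO(V)$, and invoke fineness of $F$ to choose $\Phi' \in \RO(U)$ with $\rho_{W,V} \circ \Phi \circ \rho_{V,U} = \rho_{W,U} \circ \Phi'$ (such $\Phi'$ exists by the argument used in \Cref{restriction-basic-space}). Then on the one hand \Cref{restriction-basic-space}\ref{5.13.1} gives
\[
\rho_{W,V}\bigl(\rho^{\cE}_{V,U}(\iota_U(u))(\Phi)\bigr) = \rho_{W,U}\bigl(\iota_U(u)(\Phi')\bigr) = \rho_{W,U}(\Phi'(u)),
\]
while on the other hand, unwinding $\iota_V(\rho_{V,U}(u))(\Phi) = \Phi(\rho_{V,U}(u))$ and using the defining property of $\Phi'$,
\[
\rho_{W,V}\bigl(\iota_V(\rho_{V,U}(u))(\Phi)\bigr) = \rho_{W,V}\bigl(\Phi(\rho_{V,U}(u))\bigr) = \rho_{W,U}(\Phi'(u)).
\]
The same argument works verbatim for $\sigma$, noting that $\sigma_U(\varphi)(\Phi') = \varphi$ and $\sigma_V(\rho_{V,U}(\varphi))(\Phi) = \rho_{V,U}(\varphi)$, so that both sides restrict to $\rho_{W,U}(\varphi)$ on $W$. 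Taking equivalence classes in $\cGloc$ yields $\rho^{\cG}_{V,U} \circ \iota_U = \iota_V \circ \rho_{V,U}$ and the analogue for $\sigma$, which is exactly the sheaf morphism property.

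I do not anticipate a real obstacle: everything reduces to reading off \Cref{restriction-basic-space}\ref{5.13.1} on the constant-in-$\Phi$ (for $\sigma$) and evaluation (for $\iota$) representatives, so there is no need to invoke moderateness or negligibility estimates. The only mild point of care is to make sure the chosen $\Phi'$ exists for every $W \Subset V$ and $\Phi \in \RO(V)$, which is guaranteed by the fineness encoded in property \ref{TP3} of a test pair of sheaves.
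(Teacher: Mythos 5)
Your argument is correct and follows essentially the same route as the paper: reduce to testing after $\rho_{W,V}$ for arbitrary $W \Subset V$, produce $\Phi' \in \RO(U)$ with $\rho_{W,V}\circ\Phi\circ\rho_{V,U}=\rho_{W,U}\circ\Phi'$, and apply \Cref{restriction-basic-space}~\ref{5.13.1} together with the defining property of $\Phi'$. The only (harmless) difference is that you extend a single arbitrary $\Phi\in\RO(V)$ using fineness, as in the uniqueness part of the proof of \Cref{restriction-basic-space}, and thereby obtain the exact identities $\rho^{\cE}_{V,U}(\iota_U(u))=\iota_V(\rho_{V,U}(u))$ and $\rho^{\cE}_{V,U}(\sigma_U(\varphi))=\sigma_V(\rho_{V,U}(\varphi))$ in $\cEloc(V)$, whereas the paper extends localizing nets via \Cref{soft-RO}, gets agreement only for $\e$ small enough, and concludes that the difference is negligible; your version is marginally stronger and bypasses \Cref{soft-RO}. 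One small caution: the parenthetical justification of $\iota|_F=\sigma$ via ``$\Phi(\varphi)=\varphi$'' is not literally true in $\cE(U)$ --- the two embeddings differ there, and agree only in the quotient by \Cref{properties-embedding}~(iv) --- but since you defer to the fact already recorded after the definition of the embeddings into $\cGloc$, nothing is lost.
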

\begin{proof}
We already noticed that  $\iota_{U|F(U)} = \sigma_U$ for all $U \subseteq X$ open. Since $F$ is a subsheaf of $E$ it therefore suffices to show that $\iota$ is a sheaf morphism. Let $U,V$ be open subsets of $X$ such that $V \subseteq U$. We need to show that for all $u \in E(U)$ it holds that
\[ \rho^{\mathcal{E}}_{V,U}(\iota_U(u)) -\iota_V(\rho_{V,U}(u)) \]
is negligible. It suffices to show that for all $W \Subset V$ and all $(\Phi_\e)_\e \in \ROloc(V)$ it holds that
\[ \rho_{W,V}(\rho^{\mathcal{E}}_{V,U}(\iota_U(u))(\Phi_\e)) = \rho_{W,V}(\iota_V(\rho_{V,U}(u))(\Phi_\e)) \]
for $\e$ small enough. By \Cref{soft-RO} there is $(\Phi'_\e)_\e \in \ROloc(U)$ such that
\[ \rho_{W,V} \circ \Phi_\e \circ \rho_{V,U} = \rho_{W,U} \circ \Phi'_\e. \]
Hence \Cref{restriction-basic-space} \ref{5.13.1} yields that
\begin{align*}
\rho_{W,V}(\rho^{\mathcal{E}}_{V,U}(\iota_U(u))(\Phi_\e)) &= \rho_{W,U}(\iota_U(u)(\Phi'_\e)) \\
&= \rho_{W,U}(\Phi'_\e(u)) = \rho_{W,V}(\Phi_\e(\rho_{V,U}(u)))\\
&=  \rho_{W,V}(\iota_V(\rho_{V,U}(u))(\Phi_\e))
\end{align*}
for $\e$ small enough.
\end{proof}
We end this section by showing how one can extend sheaf morphisms to $\cGloc$. 
\begin{lemma}
Let $T \colon F \times \cdots \times F \to F$ be a multilinear sheaf morphism.
The mappings
$\widehat{T}_U \colon \cGloc(U) \times \cdots \times  \cGloc(U) \to \cGloc(U)$ given by 
\[ \widehat{T}_U([R_1], \ldots [R_n]) \coleq [\widehat{T}_U(R_1, \ldots, R_n)] \]
are well-defined multilinear mappings such that 
\[ \widehat{T}_U(\sigma_U(\varphi_1), \ldots, \sigma_U(\varphi_n)) = \sigma_U(T_U(\varphi_1, \ldots, \varphi_n)). \]
Moreover, $T$ is a multilinear sheaf morphism.
\end{lemma}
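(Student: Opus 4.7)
The plan is to assemble the statement from its two precursor results: the global quotient lemma (\Cref{extension-mapping-global}) and the sheaf-level locality lemma (\Cref{extension-mapping-local}). Nothing new has to be done; the proof should read as a short bookkeeping argument that puts the pieces together. (Note: the map in the statement that descends to $\cGloc$ is the multilinear extension $\widetilde{T}_U$ defined via \eqref{definition1}, since \eqref{definition3} only makes sense for endomorphisms.)

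First, I would verify well-definedness on classes. Applying \Cref{extension-mapping-global} to the test pair $(E(U), F(U))$ with scales $\cS$ and with $\Lambda \coleq \TOloc(U)$, $\Lambda^0 \coleq \TOzloc(U)$ (both nonempty by our standing assumption), we obtain that $\widetilde{T}_U$ sends $\cE_\cM(E(U),F(U))^n$ into $\cE_\cM(E(U),F(U))$ and that the value is negligible as soon as any argument is negligible. Combined with \Cref{extension-mapping-local}, which shows that $\widetilde{T}_U$ preserves locality, this gives a restriction
\[
\widetilde{T}_U \colon \mathcal{E}_{\cM, \mathrm{loc}}(U)^n \to \mathcal{E}_{\cM, \mathrm{loc}}(U)
\]
sending the subspace where one slot lies in $\mathcal{E}_{\cN, \mathrm{loc}}(U)$ into $\mathcal{E}_{\cN, \mathrm{loc}}(U)$. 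Multilinearity at the quotient level and the identity
\[
\widetilde{T}_U(\sigma_U(\varphi_1), \ldots, \sigma_U(\varphi_n)) = \sigma_U(T_U(\varphi_1, \ldots, \varphi_n))
\]
then follow immediately from their basic-space analogues recorded in \Cref{extension-bs} (or equivalently in the $\sigma$-compatibility of \Cref{extension-mapping-global}).

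Finally, to show that the family $(\widetilde{T}_U)_U$ constitutes a multilinear sheaf morphism $\cGloc \times \cdots \times \cGloc \to \cGloc$, I would fix open sets $V \subseteq U$ in $X$ and check, for any moderate local representatives $R_1, \ldots, R_n \in \mathcal{E}_{\cM, \mathrm{loc}}(U)$, the identity
\[
\rho^{\cG}_{V, U}\bigl(\widetilde{T}_U([R_1], \ldots, [R_n])\bigr) = \widetilde{T}_V\bigl(\rho^{\cG}_{V, U}([R_1]), \ldots, \rho^{\cG}_{V, U}([R_n])\bigr).
\]
By the definition $\rho^{\cG}_{V, U}([R]) = [\rho^{\mathcal{E}}_{V, U}(R)]$, this reduces to the equality of representatives
\[
\rho^{\mathcal{E}}_{V, U}\bigl(\widetilde{T}_U(R_1, \ldots, R_n)\bigr) = \widetilde{T}_V\bigl(\rho^{\mathcal{E}}_{V, U}(R_1), \ldots, \rho^{\mathcal{E}}_{V, U}(R_n)\bigr),
\]
which is exactly the second conclusion of \Cref{extension-mapping-local}.

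There is no real obstacle: each technical ingredient—preservation of moderateness, of negligibility, of locality, and the commutation with restriction maps—has already been established. The only point worth flagging is that the sheaf-morphism property comes from \Cref{extension-mapping-local} rather than \Cref{extension-mapping-global}, since the latter knows nothing about the sheaf structure; that is, the "moreover" clause is precisely what forces us to invoke the local, sheaf-theoretic version of the extension lemma in addition to the global one.
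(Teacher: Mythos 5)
Your proposal is correct and matches the paper's intent exactly: the paper states this lemma without proof precisely because it is the immediate combination of \Cref{extension-mapping-global} (preservation of moderateness and negligibility, plus $\sigma$-compatibility) and \Cref{extension-mapping-local} (preservation of locality and commutation with $\rho^{\cE}_{V,U}$, which descends to $\rho^{\cG}_{V,U}$), which is exactly how you assemble it. Your side remark that the map should be the multilinear extension $\widetilde{T}_U$ of \eqref{definition1} rather than the $\widehat{T}$ of \eqref{definition3} is also a correct reading of the (slightly abused) notation in the statement.
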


\begin{lemma}\label{quotdiffeo}
Let $(E_1, F_1)$, $(E_2, F_2)$ be test pairs of sheaves. Suppose we are given a sheaf isomorphism $\mu \colon E_1 \to E_2$ such that its restriction to $F_1$ is a sheaf isomorphism $\mu \colon F_1 \to F_2$. The mappings $(\mu_*)_U \colon \cGloc(E_1(U), F_1(U)) \to \cGloc(E_2(U), F_2(U))$ given by $(\mu_*)_U [R] \coleq [ (\mu_*)_U R ]$ are well-defined multilinear mappings such that $(\mu_*)_U \circ \iota_U = \iota_U \circ \mu_U$ and $(\mu_*)_U \circ \sigma_U = \sigma_U \circ \mu_U$.
\end{lemma}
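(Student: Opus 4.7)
The plan is to reduce the statement to previously established results as follows. By the preceding lemma, the basic-space mapping $(\mu_*)_U \colon \cE(E_1(U),F_1(U)) \to \cE(E_2(U),F_2(U))$ already preserves locality and intertwines the restriction operators $\rho^{\cE}_{V,U}$, so it suffices to show that it additionally preserves moderateness and negligibility; the induced quotient mapping is then a well-defined linear map, and the compatibilities $(\mu_*)_U \circ \iota_U = \iota_U \circ \mu_U$ and $(\mu_*)_U \circ \sigma_U = \sigma_U \circ \mu_U$ follow by passing to equivalence classes in the commutative diagrams at the basic-space level from \Cref{extension-diff}.

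The main step will be a test-object intertwining: I will show that conjugation $\Phi \mapsto \mu_U^{-1} \circ \Phi \circ \mu_U$ sends $\TOloc(U,(E_2,F_2))$ into $\TOloc(U,(E_1,F_1))$ and $\TOzloc(U,(E_2,F_2))$ into $\TOzloc(U,(E_1,F_1))$, and that $\Phi \mapsto \mu_U \circ \Phi \circ \mu_U^{-1}$ yields the reverse inclusions. The localization property transfers because $\mu$ is a sheaf morphism, hence $\mu_V \circ \rho_{V,U} = \rho_{V,U} \circ \mu_U$ for all open $V \subseteq U$, so the defining implication of $\ROloc$ transports along $\mu_U^{\pm 1}$. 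The three conditions $(\TO)_1$, $(\TO)_2$, $(\TO)_3$ transfer because $\mu_U$ is a topological isomorphism both of $E_1(U) \to E_2(U)$ and of $F_1(U) \to F_2(U)$, so conjugation by $\mu_U$ is a topological isomorphism between the relevant spaces $\Linc_\sigma$ appearing in those conditions; the analogous statement with $\id_{F}$ and $0$ handles $(\TO)^0$.

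With the intertwining in hand, preservation of moderateness and negligibility follows from the chain rule applied to $((\mu_*)_U R)(\Phi) = \mu_U(R(\mu_U^{-1} \circ \Phi \circ \mu_U))$, which gives
\[ \ud^l((\mu_*)_U R)(\Phi)(\Psi_1,\ldots,\Psi_l) = \mu_U\bigl(\ud^l R(\mu_U^{-1} \circ \Phi \circ \mu_U)(\mu_U^{-1} \circ \Psi_1 \circ \mu_U,\ldots,\mu_U^{-1} \circ \Psi_l \circ \mu_U)\bigr). \]
Since for any $p \in \csn(F_2(U))$ the seminorm $p \circ \mu_U$ lies in $\csn(F_1(U))$, substituting $(\Phi_\e)_\e \in \TOloc(U,(E_2,F_2))$ and $(\Psi_{j,\e})_\e \in \TOzloc(U,(E_2,F_2))$ reduces the required asymptotic estimates for $(\mu_*)_U R$ to the corresponding estimates for $R$ against the conjugated test objects, which lie in the correct source spaces by the previous paragraph. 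The only real obstacle is the test-object intertwining; everything else is a routine combination of the basic-space lemma for $\mu_*$, the chain rule, and the bicontinuity of $\mu_U$, with the argument paralleling the proof of \Cref{diff-global} once the compatibility hypotheses imposed there are seen to be automatic from the sheaf isomorphism property of $\mu$.
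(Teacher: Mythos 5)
Your proposal is correct; the paper in fact states this lemma without any proof, treating it as a routine consequence of \Cref{diff-global} together with the preceding lemma on locality preservation for $\mu_*$, which is exactly the reduction you carry out. The one substantive point the paper leaves implicit --- that conjugation by $\mu_U$ maps $\TOloc$ and $\TOzloc$ for $(E_2,F_2)$ onto those for $(E_1,F_1)$ and vice versa, so that the stability hypotheses of \Cref{diff-global} are satisfied with $\Lambda_i=\TOloc(U)$, $\Lambda_i^0=\TOzloc(U)$ --- is precisely what you verify, and your verification (transport of the localizing property via $\mu_V\circ\rho_{V,U}=\rho_{V,U}\circ\mu_U$, and of $(\TO)_1$--$(\TO)_3$ via bicontinuity of $\mu_U$ on both $E$ and $F$ levels) is sound.
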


\begin{lemma} Let $T \colon E \to E$ be a sheaf morphism such that $T|_F \colon F \to F$ is a sheaf morphism. Then, the mappings $\widehat T_U \colon \cGloc(U) \to \cGloc(U)$ given by $\widehat T_U [ R ] \coleq [ \widehat T_U R ]$ are well-defined such that $\widehat T_U \circ \iota_U = \iota_U \circ \widehat T_U$ and $\widehat T_U \circ \sigma_U = \sigma_U \circ \widehat T_U$.
\end{lemma}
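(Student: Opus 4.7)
The plan is to deduce this lemma by combining three results from earlier in the paper: Lemma \ref{fasdf}, which handles the analogous statement for the non-sheaf quotient $\cG(E,F)$; the immediately preceding sheaf-level lemma, which shows that $\widehat T_U$ preserves locality and commutes with the restriction mappings $\rho^{\cE}_{V,U}$; and Lemma \ref{morphisms-TO}(iii), which ensures that the relevant spaces of test objects are closed under the operations $\Phi_\e \mapsto T_U \circ \Phi_\e - \Phi_\e \circ T_U$.

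First I would verify well-definedness: given $R \in \mathcal{E}_{\cM,\mathrm{loc}}(U)$, I need to show that $\widehat T_U R \in \mathcal{E}_{\cM,\mathrm{loc}}(U)$, and similarly with $\cN$ in place of $\cM$. Locality of $\widehat T_U R$ is the content of the preceding lemma. For moderateness/negligibility, the argument is exactly that of Lemma \ref{fasdf}, applied with $E(U)$, $F(U)$ in place of $E$, $F$ and with $\Lambda = \TOloc(U)$, $\Lambda^0 = \TOzloc(U)$: expanding $\ud^l(\widehat T_U R)(\Phi_\e)(\Psi_{1,\e},\dots,\Psi_{l,\e})$ via \eqref{definition3} produces terms of the form $T_U \circ (\ud^k R)(\Phi_\e)(\cdots)$ together with terms in which one of the arguments $\Psi_{j,\e}$ is replaced by $T_U \circ \Phi_\e - \Phi_\e \circ T_U$ or $T_U \circ \Psi_{j,\e} - \Psi_{j,\e} \circ T_U$. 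By Lemma \ref{morphisms-TO}(iii) these replacements again lie in $\TOzloc(U)$, so the resulting quantities are controlled by the moderateness (respectively negligibility) of $R$, using continuity of $T_U$ on $F(U)$. Passing to the quotient, $\widehat T_U[R] \coleq [\widehat T_U R]$ is well-defined.

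The commutativity $\widehat T_U \circ \iota_U = \iota_U \circ T_U$ and $\widehat T_U \circ \sigma_U = \sigma_U \circ T_U$ (reading the statement in the natural way suggested by Lemma \ref{extension-der}) is inherited directly from the pre-quotient identities established in Lemma \ref{extension-der}. The sheaf morphism property $\rho^{\cG}_{V,U} \circ \widehat T_U = \widehat T_V \circ \rho^{\cG}_{V,U}$ descends from the identity $\rho^{\cE}_{V,U}(\widehat T_U R) = \widehat T_V(\rho^{\cE}_{V,U} R)$ of the preceding lemma, using that the restriction mapping on $\cGloc$ is defined through representatives. I expect no real obstacle: the only nontrivial ingredient is the closure property of test objects under the commutator $T_U \circ \Phi_\e - \Phi_\e \circ T_U$, which has already been isolated as Lemma \ref{morphisms-TO}(iii), so the present lemma reduces to two lines invoking Lemma \ref{fasdf} and the preceding sheaf-locality lemma.
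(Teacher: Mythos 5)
Your proposal is correct and is exactly the argument the paper intends: the lemma is in fact stated without proof, and the implicit justification is precisely the combination you give of Lemma~\ref{fasdf} (moderateness/negligibility, which hinges on the closure property of Lemma~\ref{morphisms-TO}(iii) for $\TOloc(U)$, $\TOzloc(U)$), the preceding lemma (locality and compatibility with $\rho^{\cE}_{V,U}$), and Lemma~\ref{extension-der} for the commutation with $\iota_U$ and $\sigma_U$ (where, as you note, the stated identities should be read as $\widehat T_U \circ \iota_U = \iota_U \circ T_U$ and $\widehat T_U \circ \sigma_U = \sigma_U \circ T_U$). Your expansion of $\ud^l(\widehat T_U R)$ into the term $T_U(\ud^l R(\Phi_\e)(\cdots))$ plus terms with one slot replaced by a commutator is the right accounting, so nothing is missing.
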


We obtain the following two important corollaries.
\begin{corollary}
For every open set $U$ in $X$ the sheaf $\cGloc|_U$ is fine.
\end{corollary}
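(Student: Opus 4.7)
The plan is to obtain the required cut-off morphism on the basic sheaves using property \ref{TP3} of the test pair, and then transport it to $\cGloc|_U$ via the $\widehat T$ extension established in the last lemma before the corollary. Given disjoint closed subsets $A,B$ of an open set $U\subseteq X$, property \ref{TP3} produces a sheaf morphism $\mu \in \Hom(E|_U, E|_U)$ with $\mu|_F \in \Hom(F|_U, F|_U)$, together with open neighborhoods $V$ of $A$ and $W$ of $B$ in $U$, such that $\mu_V = \id$ and $\mu_W = 0$. The preceding lemma then lifts $\mu$ to a sheaf morphism $\widehat\mu \in \Hom(\cGloc|_U, \cGloc|_U)$, and it only remains to verify that $\widehat\mu_V = \id$ and $\widehat\mu_W = 0$.

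Both verifications reduce to inspecting the defining formula $(\widehat TR)(\Phi) = T(R(\Phi)) - \ud R(\Phi)(T \circ \Phi - \Phi \circ T)$ from \eqref{definition3}. When restricted to $V$, the choice $T_V = \id_{E(V)}$ makes the commutator $T_V\circ\Phi - \Phi\circ T_V$ vanish identically, so $(\widehat\mu_V R)(\Phi) = R(\Phi)$ for every $R \in \cEloc(V)$ and $\Phi \in \RO(V)$; hence $\widehat\mu_V$ descends to the identity on $\cGloc(V)$. Restricted to $W$, we have $T_W = 0$, so both the leading term $T_W(R(\Phi))$ and the commutator $T_W\circ\Phi - \Phi\circ T_W$ vanish, giving $\widehat\mu_W R = 0$ and therefore the zero morphism on $\cGloc(W)$.

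There is no substantive obstacle at this stage: all the work — the preservation of locality, moderateness, negligibility, and compatibility with restriction mappings — has already been discharged in the preceding lemmas establishing $\widehat T$ as a sheaf morphism on $\cGloc$. The corollary is therefore essentially a bookkeeping step that packages the pointwise-in-$U$ cut-offs furnished by \ref{TP3} through the previously constructed extension functor.
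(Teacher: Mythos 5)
Your argument is correct, but it takes a slightly different route from the paper's. The paper takes the morphism $\tau\coloneqq\mu|_F\in\Hom(F|_U,F|_U)$ furnished by \ref{TP3} and pushes it to $\cGloc|_U$ through the \emph{multilinear} extension of \Cref{extension-mapping-local} (the unary case of \eqref{definition1}), so that $\tau_V([R])=[\tau_V\circ R]$; the verification $\tau_V=\id$, $\tau_W=0$ is then immediate, with no commutator term to dispose of. You instead keep the full $E$-level morphism $\mu$ from \ref{TP3} and use the derivation-type extension \eqref{definition3}, which forces you to observe additionally that the term $\ud R(\Phi)(\mu\circ\Phi-\Phi\circ\mu)$ vanishes on $V$ and on $W$ because $\id$ and $0$ commute with everything; your check of this is correct, and the resulting $\widehat\mu_V=\id$, $\widehat\mu_W=0$ is exactly what fineness requires. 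The two constructions produce genuinely different endomorphisms of $\cGloc(U)$ away from $V\cup W$ (they differ by the commutator term), but this is irrelevant for the definition of fineness. The paper's choice is the more economical one — it needs only the $F$-part of \ref{TP3} and the simpler $\widetilde T$ mechanism — whereas yours carries along the extra structure that makes $\widehat\mu$ commute with $\iota$, a property not needed here. Both rest on the same previously established facts (preservation of locality, moderateness, negligibility, and compatibility with $\rho^{\cE}_{V,U}$), so either is a legitimate one-line consequence of the preceding lemmas.
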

\begin{proof}
Let $A$ and $B$ be closed sets in $U$ such that $A \cap B = \emptyset$. Let $\tau \in \Hom(F|_U,F|_U)$ be such that $\tau_V = \id$ and $\tau_W = 0$ for some open neighbourhoods $V$ and $W$ (in $U$) of $A$ and $B$, respectively. Consider the associated sheaf morphism $\tau \in \Hom(\cGloc|_U, \cGloc|_U)$. Then,
\[ \tau_V([R]) = [\tau_V(R)] = [\tau_V \circ R] = [R] \]
for all $[R] \in \cGloc(V)$. Similarly, one can show that $\tau_W = 0$.
\end{proof}
\begin{corollary}
Suppose that $F$ is a locally convex sheaf of algebras. Then, $\cGloc$ is a sheaf of algebras and the $\sigma$-embedding is a sheaf homomorphism of algebras.
\end{corollary}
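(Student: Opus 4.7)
The plan is to specialize the preceding multilinear-extension lemma to the bilinear sheaf morphism $T \colon F \times F \to F$ given by the multiplication on $F$. Since $F$ is assumed to be a locally convex sheaf of algebras, the multiplication on each $F(U)$ is jointly continuous and commutes with the restriction maps, so it genuinely is a bilinear sheaf morphism in the sense of Section~5.1. That lemma then immediately produces bilinear mappings $\widehat{T}_U \colon \cGloc(U) \times \cGloc(U) \to \cGloc(U)$ which assemble into a bilinear sheaf morphism and satisfy
\[
\widehat{T}_U(\sigma_U(\varphi_1), \sigma_U(\varphi_2)) = \sigma_U(\varphi_1 \varphi_2),
\]
which is precisely the statement that $\sigma_U$ respects multiplication.

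What remains is to verify that $\widehat{T}_U$ endows $\cGloc(U)$ with an associative commutative algebra structure. In the basic space $\cE(U) = C^\infty(\RO(U),F(U))$ the product is defined pointwise in $\Phi$ (the corollary following \Cref{extension-bs}): $(R_1 \cdot R_2)(\Phi) \coleq R_1(\Phi) \cdot R_2(\Phi)$. Associativity and commutativity therefore hold in $\cE(U)$ because they hold in $F(U)$. Using \Cref{extension-mapping-global} and \Cref{extension-mapping-local} applied to the bilinear map $T_U$, one sees that $\cE_{\cM,\mathrm{loc}}(U)$ is a subalgebra of $\cE(U)$ and that $\cE_{\cN,\mathrm{loc}}(U)$ is an ideal in it (a product containing a negligible factor is again negligible). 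Passing to the quotient therefore yields an associative commutative algebra structure on $\cGloc(U)$, given exactly by $\widehat{T}_U$.

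Combining the two observations, the multiplication sheaf morphism on $\cGloc$ produced by specializing the preceding lemma is compatible with restrictions, so $\cGloc$ is a sheaf of algebras, and the compatibility of $\sigma$ with multiplication shown above makes $\sigma$ a sheaf homomorphism of algebras. I do not anticipate any genuine obstacle here: the essential sheaf-theoretic and quotient-theoretic work has already been carried out in the preceding extension lemmas, and the corollary is just their specialization to multiplication together with a trivial pointwise verification of the algebra axioms.
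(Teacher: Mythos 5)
Your proof is correct and follows exactly the route the paper intends: the corollary is stated without proof as an immediate consequence of the preceding lemma on multilinear sheaf morphisms, specialized to the multiplication $T\colon F\times F\to F$, with associativity and commutativity inherited pointwise from $F(U)$ via the quotient construction. Nothing to add.
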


\section{Diffeomorphism invariant algebras of distributions}\label{sec_appldist}

The space of distributions on a paracompact Hausdorff manifold $M$ is defined as
\[ \cD'(M) \coleq ( \Gamma_c ( M, \Vol(M) ) )' \]
where $\Gamma_c (M, \Vol(M))$ denotes the space of compactly supported sections of the volume bundle $\Vol(M)$, endowed with its natural (LF)-topology (see \cite[Section 3.1]{GKOS}). It is well known that $\cD'$ and $C^\infty$ are locally convex sheaves on $M$, so $(\cD', C^\infty)$ is a test pair of sheaves.

Given any open subset $U \subseteq M$, the space $\TOloc(U)$ is nonempty -- we refer to \cite{bigone} for the concrete construction of localizing test objects for $(\cD', C^\infty)$, which is done by convolution with smooth mollifiers in local charts.

Fix the asymptotic scale to be the polynomial scale \eqref{polyscale}. By the previous section we obtain a fine sheaf $\cGloc$ of algebras such that $\sigma \colon C^\infty \to \cGloc$ is a sheaf homomorphism of algebras and $\iota \colon \cD' \to \cGloc$ is an injective sheaf homomorphism of vector spaces. Given any vector field $X$ on $M$, the Lie derivative $L_X$ of distributions and smooth functions satisfies the assumptions of \Cref{extension-der}, so it defines a mapping
\[ \widehat L_X \colon \cGloc(M) \to \cGloc(M) \]
which commutes with $\iota$.

Moreover, given any diffeomorphism $\mu \colon M \to N$, we apply \Cref{quotdiffeo} to the functors $E_1 = \cD'$, $E_2 = \cD' \circ \mu$, $F_1 = C^\infty$, $F_2 = C^\infty \circ \mu$, which gives an induced action
\[ \mu_* \colon
 \cGloc(M) \to \cGloc(N) \]
which commutes with $\iota$.

Hence, we have easily obtained the following result of \cite{global}:

\begin{theorem}\label{thmone}Let $M$ be a paracompact Hausdorff manifold. There is an associative commutative algebra $\cGloc(M)$ with unit containing $\cD'(M)$ injectively as a linear subspace and $C^\infty(M)$ as a subalgebra. $\cGloc(M)$ is a differential algebra, where the derivations $\widehat L_X$ extend the usual Lie derivatives from $\cD'(M)$ to $\cGloc(M)$, and $\cGloc$ is a fine sheaf of algebras over $M$.
\end{theorem}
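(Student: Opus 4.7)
The plan is to verify that $(\cD', C^\infty)$ is a test pair of sheaves on $M$ in the sense of Section~\ref{local-theory} and then read off the conclusion from the abstract machinery already established. First I would check properties \ref{TP1}--\ref{TP3}. The inclusion of $C^\infty$ as a subsheaf of $\cD'$ and the test-pair property $(\cD'(U), C^\infty(U))$ at each open $U \subseteq M$ are standard, as is the fact that both are locally convex sheaves. Property \ref{TP3} reduces to the existence of smooth partitions of unity on a paracompact Hausdorff manifold, since multiplication by a smooth cutoff is a continuous operator on both $\cD'$ and $C^\infty$ and commutes with restriction to open subsets, hence is a sheaf morphism of both sheaves.

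Next, the standing nonemptiness hypothesis $\TOloc(U) \neq \emptyset$ for every open $U \subseteq M$ has to be secured. This is the one genuinely non-formal input and I would import it from the explicit construction in \cite{bigone}, where localizing test objects for $(\cD', C^\infty)$ are produced by gluing scaled mollifier convolutions on a chart atlas, simultaneously arranging the moderate-growth condition $(\TO)_1$ and the approximation conditions $(\TO)_2$, $(\TO)_3$ with respect to the polynomial scale \eqref{polyscale}, which is fixed once and for all as the admissible pair.

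With these inputs the conclusions assemble themselves from Section~\ref{local-theory}. The sheaf propositions of that section give $\cGloc$ as a sheaf of vector spaces, and because $C^\infty$ is a locally convex sheaf of algebras the corresponding corollary promotes it to a sheaf of algebras; the embeddings $\iota \colon \cD' \to \cGloc$ and $\sigma \colon C^\infty \to \cGloc$ are sheaf morphisms satisfying $\iota|_{C^\infty} = \sigma$ with $\sigma$ an algebra homomorphism, so $C^\infty(M)$ sits in $\cGloc(M)$ as a subalgebra and $\cD'(M)$ as an injectively embedded linear subspace. Fineness on each open subset follows from the fine-sheaf corollary at the end of Section~\ref{local-theory}, which is a direct consequence of \ref{TP3}. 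For the derivation structure, a smooth vector field $X$ on $M$ produces a sheaf morphism $L_X \colon \cD' \to \cD'$ whose restriction to $C^\infty$ is again a sheaf morphism and is a derivation of the algebra $C^\infty(U)$ for each $U$; Lemma~\ref{fasdf} applied at the sheaf level then yields a linear extension $\widehat L_X \colon \cGloc(M) \to \cGloc(M)$ compatible with $\iota$, and a short direct computation on representatives using the definition \eqref{definition3} together with the Leibniz rule for $L_X|_{C^\infty}$ shows that $\widehat L_X$ is in fact a derivation of $\cGloc(M)$.

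The main obstacle is entirely concentrated in the step of verifying $\TOloc(U) \neq \emptyset$, which cannot be settled by formal sheaf-theoretic considerations. The tension between the polynomial scale in the parameter $\e$ and the absence of a canonical global dilation on $M$ is precisely what forces the chart-atlas construction of \cite{bigone}; once that geometric input is in hand, the remainder of the proof of \Cref{thmone} is a routine specialization of the general machinery of Sections~\ref{global-theory} and \ref{local-theory}.
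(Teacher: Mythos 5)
Your proposal is correct and follows essentially the same route as the paper: verify that $(\cD',C^\infty)$ is a test pair of sheaves, import the nonemptiness of $\TOloc(U)$ from the mollifier construction of \cite{bigone}, fix the polynomial scale, and then invoke the general machinery of \Cref{local-theory} together with \Cref{extension-der} (via \Cref{fasdf}) for the Lie derivatives. The only difference is one of explicitness — you spell out the verification of \ref{TP3} via smooth partitions of unity and note that the Leibniz property of $\widehat L_X$ requires a short computation on representatives, both of which the paper leaves implicit.
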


\section{Diffeomorphism invariant algebras of ultradistributions}\label{sec_applications}
\subsection{Spaces of ultradifferentiable functions and their duals.}\label{preliminaries}
Let $(M_p)_{p \in \N}$ be a sequence of positive reals (with $M_0 = 1$). We will make use of the following conditions:
\begin{enumerate}
\item[$(M.1)$] $M_p^2 \leq M_{p-1}M_{p+1}$, $p \in \Z_+$,
\item[$(M.2)$] $M_{p+q} \leq AH^{p+q}M_pM_q$, $p,q \in \N$, for some $A,H \geq 1$,
\item[$(M.3)'$] $\displaystyle \sum_{p = 1}^\infty \frac{M_{p-1}}{M_p} < \infty$.
\end{enumerate}
We refer to \cite{Komatsu} for the meaning of these conditions. For $\alpha \in \N^d$ we write $M_\alpha = M_{|\alpha|}$.  As usual, the relation $M_p\subset N_p$ between two weight sequences means that there are $C,h>0$ such that 
$M_p\leq Ch^{p}N_{p},$ $p\in\N$. The stronger relation $M_p\prec N_p$ means that the latter inequality remains valid for every $h>0$ and a suitable $C=C_{h}>0$. The associated function of $M_p$ is defined as
\[ M(t)\coleq\sup_{p\in\N}\log\frac{t^p}{M_p},\qquad t > 0, \]
and $M(0) := 0$. We define $M$ on $\R^d$ as the radial function $M(x) = M(|x|)$, $x \in \R^d$. Under $(M.1)$, the assumption $(M.2)$ holds \cite[Prop 3.6]{Komatsu} if and only if
\[ 2M(t) \leq M(Ht) + \log A, \qquad  t > 0. \]
Unless otherwise explicitly stated, $M_p$ will \emph{always} stand for a weight sequence satisfying $(M.1)$, $(M.2)$, $(M.3)'$.

For a regular compact set $K$ in $\R^d$ and $h > 0$ we write $\mathcal{E}^{M_p,h}(K)$ for the Banach space consisting of all $\varphi \in C^\infty(K)$ such that
\begin{equation}
\| \varphi \|_{K,h} \coleq\sup_{\alpha \in \N^d} \sup_{x \in K} \frac{|\varphi^{(\alpha)}(x)|}{h^{|\alpha|}M_{|\alpha|}} < \infty. 
\label{norm}
\end{equation}
The space $\mathcal{D}^{M_p,h}_K$ consists of all  $\varphi \in C^\infty(\R^d)$ with support in $K$ that satisfy \eqref{norm}. Let $\Omega \subseteq \R^d$ be open. We define
\[ \mathcal{E}^{(M_p)}(\Omega) = \varprojlim_{K \Subset \Omega} \varprojlim_{h \to 0^+}\mathcal{E}^{M_p,h}(K), \qquad \mathcal{E}^{\{M_p\}}(\Omega) = \varprojlim_{K \Subset \Omega} \varinjlim_{h \to \infty}\mathcal{E}^{M_p,h}(K), \]
and 
\[ \mathcal{D}^{(M_p)}(\Omega) = \varinjlim_{K \Subset \Omega} \varprojlim_{h \to 0^+}\mathcal{D}^{M_p,h}_K, \qquad \mathcal{D}^{\{M_p\}}(\Omega) = \varinjlim_{K \Subset \Omega} \varinjlim_{h \to \infty}\mathcal{D}^{M_p,h}_K. \]
Elements of $\mathcal{E}^{(M_p)}(\Omega)$ and $\mathcal{E}^{\{M_p\}}(\Omega)$ are called \emph{ultradifferentiable functions of class $(M_p)$ of Beurling type on $\Omega$} and \emph{ultradifferentiable functions of class $\{M_p\}$ of Roumieu type on $\Omega$}, respectively.
These spaces are complete Montel locally convex algebras (under pointwise multiplication) \cite[Th.\ 2.6, Th.\ 5.12, Th.\ 2.8]{Komatsu}. Elements of the dual spaces $\mathcal{D}'^{(M_p)}(\Omega)$ and $\mathcal{D}'^{\{M_p\}}(\Omega)$ are called \emph{ultradistributions of class $(M_p)$ of Beurling type on $\Omega$} and \emph{ultradistributions of class $\{M_p\}$ of Roumieu type on $\Omega$}, respectively. We endow these spaces with the strong topology. $\mathcal{D}'^{(M_p)}(\Omega)$ and $\mathcal{D}'^{\{M_p\}}(\Omega)$ are complete Montel locally convex spaces \cite[Th.\ 2.6]{Komatsu} and $\Omega' \to \mathcal{D}'^{(M_p)}(\Omega')$, $\Omega' \to \mathcal{D}'^{\{M_p\}}(\Omega')$  are locally convex sheaves on $\Omega$ \cite[Th.\ 5.6]{Komatsu}.

We write $\mathcal{R}$ for the family of positive real sequences $(r_j)_{j \in \N}$ (with $r_0 =1$) which increase to infinity. This set is partially ordered and directed by the relation $r_j \preceq s_j$, which means that there is an $j_0 \in \N$ such that $r_j \leq s_j$ for all $j \geq j_0$. By \cite[Prop.\ 3.5]{Komatsu3} a function $\varphi \in C^\infty(\Omega)$ belongs to $\mathcal{E}^{\{M_p\}}(\Omega)$ if and only if 
\[ \| \varphi \|_{K, r_j} \coleq \sup_{\alpha \in \N^d}\sup_{x \in K} \frac{|\varphi^{(\alpha)}(x)|}{M_{\alpha}\prod_{j = 0}^{|\alpha|}r_j} < \infty, \]
for all $K \Subset \Omega$ and $r_j \in \mathcal{R}$. Moreover, the topology of $\mathcal{E}^{\{M_p\}}(\Omega)$ is generated by the system of seminorms $\{\| \, \|_{K, r_j} \, : \, K \Subset \Omega, r_j \in \mathcal{R}\}$.

In the sequel we shall write $\ast$ instead of $(M_p)$ or $\{M_p\}$ if we want to treat both cases simultaneously. In addition, we shall often first state assertions for the Beurling case followed in parenthesis by the corresponding statements for the Roumieu case. 

\subsection{Nonlinear extensions of spaces of ultradistributions}

We apply the general theory developed in \Cref{global-theory} and \Cref{local-theory} to construct algebras containing spaces of ultradistributions which are invariant under real-analytic diffeomorphisms. 
Let us remark that this construction is a novelty, as the previous construction in \cite{D-V-V} was given in the context of special Colombeau algebras and therefore cannot be diffeomorphism invariant.

In order to not having to develop the theory of ultradistributions on manifolds here, we restrict the considerations to the local case, i.e., to open subsets of $\R^n$, where diffeomorphism invariance can be stated easily.

By the remarks in \Cref{preliminaries} and the existence of partitions of unity of ultradifferentiable functions of class $\ast$ \cite[Prop.\ 5.2]{Komatsu} it is clear that the pair $(\mathcal{D}'^\ast, \mathcal{E}^\ast)$ is a test pair of sheaves on $\R^d$, giving rise to the corresponding presheaf $\cEloc$ of basic spaces (\Cref{basicspaces}).

We now choose appropriate asymptotic scales. Given $r_j \in \mathcal{R}$ we write $M_{r_j}$ for the associated function of the weight sequence $M_p \prod_{j=0}^pr_j$.

\begin{definition} We define
\begin{gather*}
\cA^{(M_p)} \coleq \{ e^{M(\lambda/\e)} \ : \ \lambda > 0 \},  \qquad \cI^{(M_p)} \coleq \{ e^{-M(\lambda/\e)} \ : \ \lambda > 0 \}, \\
\cA^{\{M_p\}} \coleq \{ e^{M_{r_j}(1/\e)} \ : \ r_j \in \mathcal{R} \},  \qquad \cI^{\{M_p\}} \coleq \{ e^{-M_{r_j}(1/\e)} \ : \ r_j \in \mathcal{R} \}.
\end{gather*}
Condition $(M.2)$ ensures that $\operatorname{sc}^{\ast} \coleq (\cA^{\ast}, \cI^{\ast})$ are admissible pair of scales \footnote{We do not use the notation $\cS^\ast$ for the pair of scales $(\cA^{\ast}, \cI^{\ast})$ since this is the standard notation for Gelfand-Shilov type spaces.}. For $\Omega \subseteq \R^d$ open we set
\[ \TO^\ast_{\operatorname{loc}}(\Omega) \coleq \TOloc(\Omega, \mathcal{D}'^\ast, \mathcal{E}'^\ast, \operatorname{sc}^{\ast}), \qquad \TO^{0, \ast}_{\operatorname{loc}}(\Omega) \coleq \TOzloc(\Omega, \mathcal{D}'^\ast, \mathcal{E}'^\ast, \operatorname{sc}^{\ast}). \]
\end{definition}
\begin{remark} \label{test-objects-Roumieu}
It follows from \cite[Prop.\ 4.4]{D-V-V} that $(\Phi_\e)_\e \in \Linc(\mathcal{D}'^{\{M_p\}}(\Omega), \mathcal{E}^{\{M_p\}}(\Omega))^I$ satisfies $(\TO)_1$ and $(\TO)_2$ (with respect to the scale 
$\operatorname{sc}^{\{M_p\}}$) if and only if 
\begin{enumerate}[label=(\roman*)]
\item $\forall u \in  \mathcal{D}'^{\{M_p\}}(\Omega) \, \forall K \Subset \Omega \, \forall \lambda > 0 \, \exists h > 0 : \| \Phi_\e(u)\|_{K,h}  = O(e^{M(\lambda/\e)})$, 
\item $\forall \varphi \in  \mathcal{E}^{\{M_p\}}(\Omega) \, \forall K \Subset \Omega \, \exists \lambda > 0 \, \exists  h > 0 : \| \Phi_\e(\varphi) - \varphi \|_{K,h}  = O(e^{-M(\lambda/\e)})$. 
\end{enumerate}
\end{remark}
In order to be able to apply the results from \Cref{local-theory} we must show that $\TO^\ast_{\operatorname{loc}}(\Omega)$ is nonempty for every open set $\Omega \subseteq \R^d$. For this we shall need the following lemma.
\begin{lemma} \label{M_p-N_p} 
Let $M_p$ and $N_p$ be two weight sequences satisfying $(M.1)$ such that $N_p \prec M_p$ and let $M$ and $N$ be the associated functions of $M_p$ and $N_p$, respectively.  Then, there is an increasing net $(r_\e)_{\e}$ of positive reals with $\displaystyle \lim_{\e \to 0^+}r_\e = 0$  such that for every $\lambda > 0$ there is $\e_0 > 0$ such that for all $\e < \e_0$ it holds that
\[ M(t) \leq N(r_\e t) + M(\lambda/ \e), \qquad t > 0. \]
\end{lemma}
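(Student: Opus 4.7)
First I would translate the hypothesis $N_p \prec M_p$ directly into an inequality between the associated functions. The relation means that for every $h > 0$ there is a constant $C_h \geq 1$ with $N_p \leq C_h h^p M_p$ for all $p \in \N$. Taking reciprocals and then $\log \sup$ in the definition of $M$ gives, for every $t > 0$,
\begin{equation*}
M(t) = \sup_{p \in \N} \log \frac{t^p}{M_p} \leq \sup_{p \in \N} \log \frac{C_h (ht)^p}{N_p} = N(ht) + \log C_h.
\end{equation*}

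The remaining task is to choose a single family $(r_\e)_\e$ for which the ``error term'' $\log C_{r_\e}$ can be dominated by $M(\lambda/\e)$ for \emph{every} $\lambda > 0$ once $\e$ is small. The plan is a diagonal step-function construction. Set $h_n \coleq 1/n$ and $C_n \coleq C_{h_n}$. Because $M$ is non-decreasing and tends to infinity (a consequence of $(M.1)$), I can pick a strictly decreasing sequence $\e_1 = 1 > \e_2 > \e_3 > \cdots$ with $\e_n \to 0$ and
\begin{equation*}
M\!\left(\tfrac{1}{n\e_n}\right) \geq \log C_n \qquad \text{for every } n \geq 1.
\end{equation*}
Define $r_\e \coleq 1/n$ whenever $\e \in (\e_{n+1}, \e_n]$. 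Then $r_\e$ is a step function, positive, non-decreasing in $\e$, and satisfies $r_\e \to 0$ as $\e \to 0^+$.

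For the verification, fix $\lambda > 0$ and choose an integer $n_0 \geq 1/\lambda$. Setting $\e_0 \coleq \e_{n_0}$, any $\e < \e_0$ lies in a unique interval $(\e_{n+1}, \e_n]$ with $n \geq n_0$. Then $\lambda \geq 1/n$ and $\e \leq \e_n$, so $\lambda/\e \geq 1/(n\e_n)$; monotonicity of $M$ and the choice of $\e_n$ give $M(\lambda/\e) \geq \log C_n$. Applying the step-one estimate with $h = r_\e = 1/n$ yields
\begin{equation*}
M(t) \leq N(r_\e t) + \log C_n \leq N(r_\e t) + M(\lambda/\e),
\end{equation*}
which is the desired bound.

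The main obstacle, and the whole point of the construction, is that $(r_\e)_\e$ must be chosen \emph{before} $\lambda$ is given. The diagonal trick of insisting on $M(1/(n\e_n)) \geq \log C_n$, with the extra factor $1/n$ inside the argument of $M$, is precisely what makes the threshold $\e_0 = \e_{n_0}$ depend on $\lambda$ while leaving the net $r_\e$ itself $\lambda$-independent; without that factor one would only control the case $\lambda \geq 1$.
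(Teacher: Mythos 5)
Your proof is correct, but it takes a genuinely different route from the paper's. The paper invokes Komatsu's Lemma 3.10, which converts $N_p \prec M_p$ into an exact identity $M(t) = N(\rho(t))$ for a continuous increasing $\rho$ with $\rho(t)/t \to 0$; it then sets $r_\e \coleq \sup_{t \geq 1/\sqrt{\e}} \rho(t)/t$ and verifies the estimate by splitting into $t \geq 1/\sqrt{\e}$ (where $N(r_\e t) \geq N(\rho(t)) = M(t)$) and $t < 1/\sqrt{\e}$ (where $M(t) \leq M(1/\sqrt{\e}) \leq M(\lambda/\e)$ once $\e \leq \lambda^2$). You instead work directly from the definition of $\prec$ to get the additive bound $M(t) \leq N(ht) + \log C_h$ for every $h$, and then diagonalize over $h = 1/n$ by choosing thresholds $\e_n$ with $M(1/(n\e_n)) \geq \log C_n$. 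Your argument is more elementary and self-contained (no appeal to the cited lemma, and it only uses that $M$ is non-decreasing with $M(t)\to\infty$, which already follows from the $p=1$ term in the supremum rather than from $(M.1)$); the price is a step-function net rather than one given by a single formula, which is immaterial here. All the details check out: the intervals $(\e_{n+1},\e_n]$ cover $I$, the net is non-decreasing and tends to $0$, and for $\e < \e_{n_0}$ with $n_0 \geq 1/\lambda$ one indeed lands in an interval with $n \geq n_0$, giving $\lambda/\e \geq 1/(n\e_n)$ and hence $\log C_n \leq M(\lambda/\e)$ as you claim.
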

\begin{proof}
By \cite[Lemma 3.10]{Komatsu} there is a continuous increasing function $\rho\colon (0,\infty) \to (0,\infty)$ with
\[ \lim_{t \to \infty} \frac{\rho(t)}{t} = 0 \]
such that $M(t) = N(\rho(t))$ for all $t > 0$. One can readily verify that 
\[ r_\e \coleq \sup_{t \geq 1/\sqrt{\e}}  \frac{\rho(t)}{t} \]
satisfies all requirements.
\end{proof}
By  \cite[Lemma 4.3]{Komatsu}
there is a weight sequence $N_p$ satisfying $(M.1)$ and $(M.3)'$ such that $N_p \prec M_p$. Pick $\psi \in \mathcal{D}^{(M_p)}(\R^d)$ even with $0 \leq \psi \leq 1$, $\supp \psi \subset B(0,2)$, and $\psi \equiv 1$ on $\overline{B}(0,1)$, and $\chi \in \mathcal{D}^{(N_p)}(\R^d)$ even with $\supp \chi \subset B(0,2)$ and $\chi \equiv 1$ on $\overline{B}(0,1)$. Choose $(r_\e)_\e$ according to \Cref{M_p-N_p}. We define
\[ \theta_\e(x) \coleq \frac{1}{\e^d} \mathcal{F}^{-1}(\psi)(x/\e)\chi(x/r_\e), \qquad x \in \R^d, \]
where we fix the constants in the Fourier transform as follows
\[ \mathcal{F}(\varphi)(\xi)  =\widehat{\varphi}(\xi) \coleq \int_{\R^d}\varphi(x) e^{-ix\xi}\,\dx. \]
Next, let $(K_n)_{n \in \N}$ be an exhaustion by compacts of $\Omega$ and choose $\kappa_n \in \mathcal{D}^{(M_p)}(\Omega)$ such that $\kappa_n \equiv 1$ on $K_n$. For $\e \in I$ we set $\kappa_\e = \kappa_n$ if $n \leq \e^{-1} < n+1$. Finally, we define
\[ \Phi_\e (u)\coleq (\kappa_\e u) \ast \theta_\e = \langle u(x), \kappa_\e(x)\theta_\e(\cdot - x) \rangle, \qquad u \in \mathcal{D}'^\ast(\Omega). \]
\begin{lemma} \label{RO-ultradistributions}
$(\Phi_\e)_\e \in \TO^\ast_{\operatorname{loc}}(\Omega)$. 
\end{lemma}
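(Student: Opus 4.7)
The plan is to verify each of the four defining conditions---localization, $(\TO)_1$, $(\TO)_2$, $(\TO)_3$---in turn, using the explicit structure of $\theta_\e$. For localization, the key fact is that $\supp\theta_\e \subseteq \overline{B}(0,2r_\e)$ and $r_\e \to 0$: if $V \Subset V_0 \Subset \Omega$, then for $\e$ with $2r_\e < d(V,\partial V_0)$ the function $x \mapsto \kappa_\e(x)\theta_\e(y-x)$ has support in $V_0$ for every $y \in V$, so the pairing vanishes whenever $u|_{V_0}=0$. For $(\TO)_3$, because $\psi$ and $\chi$ are even, $\theta_\e$ is even and duality gives $\langle \Phi_\e(u),\varphi\rangle = \langle u,\kappa_\e(\theta_\e * \varphi)\rangle$ for $\varphi \in \cD^\ast(\Omega)$; then $\theta_\e * \varphi \to \varphi$ in $\cE^\ast$ by the usual mollifier argument (in Fourier, $\widehat{\theta_\e}(\xi) \to 1$ uniformly on compacts), and $\kappa_\e \equiv 1$ on $\supp\varphi$ for small $\e$, giving convergence in $\cD^\ast(\Omega)$.

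For $(\TO)_1$ I would differentiate under the pairing, $\partial^\alpha(\Phi_\e u)(y) = \langle u(x),\kappa_\e(x)\partial^\alpha_y\theta_\e(y-x)\rangle$, and use the continuity estimate of $u$ as an ultradistribution to reduce the problem to uniform $\|\cdot\|_{L,\ell}$ bounds on $x \mapsto \kappa_\e(x)\partial^\alpha \theta_\e(y-x)$. These bounds come from a Leibniz expansion of $\theta_\e = \e^{-d}\eta(\cdot/\e)\chi(\cdot/r_\e)$, the Paley--Wiener-type bound $|\partial^\beta \eta(x)| \le C\ell^{|\beta|}M_{|\beta|}$ coming from $\eta = \mathcal{F}^{-1}\psi$ with $\psi \in \cD^{(M_p)}$, and property $(M.2)$ to absorb the multinomial factors. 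The resulting growth is a constant times $e^{M(\lambda/\e)}h^{|\alpha|}M_{|\alpha|}$, which is precisely $(\TO)_1$. In the Roumieu case, one invokes Remark~\ref{test-objects-Roumieu} and performs the same estimate with an $r_j$-dependent norm.

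The subtle condition is $(\TO)_2$, which demands decay $e^{-M(\lambda/\e)}$ of $\Phi_\e(\varphi) - \varphi$ in $\|\cdot\|_{K,h}$ for \emph{every} $\lambda>0$. For $\e$ small enough that $\kappa_\e\equiv 1$ on a neighbourhood of $K + \overline{B}(0,2r_\e)$, one has $\Phi_\e(\varphi)(y) = (\varphi * \theta_\e)(y)$ for $y\in K$. Split $\theta_\e = \theta_\e^{(0)} - (1-\chi(\cdot/r_\e))\theta_\e^{(0)}$, where $\theta_\e^{(0)}(x) = \e^{-d}\eta(x/\e)$ is the uncut mollifier. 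For the main term, $\widehat{\theta_\e^{(0)}}(\xi) = \psi(\e\xi)$ with $\psi\equiv 1$ near $0$, so $\widehat{\varphi*\theta_\e^{(0)} - \varphi}(\xi) = \hat\varphi(\xi)(\psi(\e\xi)-1)$ is supported in $|\xi|\ge 1/\e$, and the desired decay, together with the estimates on derivatives, follows from the ultradifferentiable Paley--Wiener bounds on $\hat\varphi$. The tail term is controlled by the decay of $\eta$ off $|x|\ge r_\e$ combined with Lemma~\ref{M_p-N_p}: $\chi$ being of class $(N_p)$ with $N_p \prec M_p$ and the specific choice of $(r_\e)_\e$ translate $(N_p)$-bounds into estimates in the $(M_p)$-scale, producing the factor $e^{-M(\lambda/\e)}$ for every $\lambda>0$.

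The main obstacle is precisely this tail estimate in $(\TO)_2$: one must quantify how the $\chi$-cutoff perturbs the clean Fourier identity of the unrestricted mollifier, and show that the perturbation remains negligible in the $(M_p)$-scale. The whole point of introducing the auxiliary weaker weight sequence $N_p$ and the sequence $(r_\e)_\e$ from Lemma~\ref{M_p-N_p} is to convert the $(N_p)$-regularity of $\chi$ into an estimate of the right $(M_p)$-type. Once this is achieved in the Beurling setting, the Roumieu version is a parameter-tracking argument through $\mathcal{R}$ using Remark~\ref{test-objects-Roumieu}.
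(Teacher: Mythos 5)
Your overall architecture matches the paper's: verify localization from $r_\e \to 0$, obtain $(\TO)_3$ by transposition and the already-established $(\TO)_2$, and reduce $(\TO)_1$, $(\TO)_2$ to Paley--Wiener estimates, with \Cref{M_p-N_p} as the key input for the cutoff. Localization, $(\TO)_3$, and the main term of your $(\TO)_2$ decomposition are fine. But the tail estimate in $(\TO)_2$ --- the step you yourself single out as the main obstacle --- does not close as described. The term $(1-\chi(\cdot/r_\e))\theta_\e^{(0)}$ carries \emph{no} decay from $\chi$ (the factor $1-\chi(\cdot/r_\e)$ is merely bounded); all the decay comes from $\eta=\mathcal{F}^{-1}\psi$ on $|x|\ge r_\e$, and since $\psi$ is only of class $(M_p)$, the best available bound is $\lvert\eta(y)\rvert\le C_L e^{-M(Ly)}$, giving a tail of order $e^{-M(Lr_\e/\e)}$. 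Because $r_\e\to 0$, for any fixed $L$ one eventually has $Lr_\e<\lambda$, and $M(\lambda/\e)-M(Lr_\e/\e)\to\infty$ (already for Gevrey sequences), so this is \emph{not} $O(e^{-M(\lambda/\e)})$ for every $\lambda$. Moreover \Cref{M_p-N_p} cannot upgrade it: that lemma converts the quantity $N(r_\e t)$ into $M(t)-M(\lambda/\e)$, i.e.\ it trades on the $N$-decay of $\widehat{\chi}$ (available at \emph{every} rate because $\chi\in\cD^{(N_p)}$ with $N_p\prec M_p$), not on the $M$-decay of $\eta$. Your decomposition puts the burden of decay at scale $r_\e/\e$ on the wrong function.

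The repair is to stay on the Fourier side throughout, as the paper does. For $\lvert\xi\rvert\le 1/(2\e)$ one has
\begin{equation*}
1-\widehat{\theta}_\e(\xi)=\frac{r_\e^d}{(2\pi)^d}\int_{\R^d}\bigl(1-\psi(\e\eta)\bigr)\,\widehat{\chi}\bigl(r_\e(\xi-\eta)\bigr)\,\deta ,
\end{equation*}
where the integrand forces $\lvert\eta\rvert\ge 1/\e$, hence $r_\e\lvert\xi-\eta\rvert\ge r_\e/(2\e)$; the bound $\lvert\widehat{\chi}(t)\rvert\le Ce^{-N(ct)}$ together with \Cref{M_p-N_p} (applied at $t=H\lambda/\e$ and combined with $(M.2)$) then yields $O(e^{-M(\lambda/\e)})$ for every $\lambda$, and the region $\lvert\xi\rvert\ge 1/(2\e)$ is absorbed by the decay of $\widehat{\kappa\varphi}$ exactly as in your main term (this is \Cref{growth-fourier-theta}). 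A related caveat applies to your physical-space Leibniz argument for $(\TO)_1$: each derivative falling on $\chi(\cdot/r_\e)$ costs a factor $r_\e^{-1}$ against $N_\gamma$-bounds, and controlling $\sup_\gamma (\ell/r_\e)^{\lvert\gamma\rvert}N_\gamma/(h^{\lvert\gamma\rvert}M_\gamma)$ by $e^{M(\lambda/\e)}$ requires an estimate on the associated function of the quotient sequence $M_p/N_p$ at $1/r_\e$ that \Cref{M_p-N_p} does not supply; the paper avoids this entirely by estimating $\widehat{\theta}_\e$ directly as the convolution $\frac{r_\e^d}{(2\pi)^d}\,\psi(\e\cdot)\ast\widehat{\chi}(r_\e\cdot)$ and splitting at $\lvert\xi\rvert=4/\e$. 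Finally, note that the paper reduces $(\TO)_2$ to a zeroth-order sup-norm estimate via \cite[Prop.\ 4.2]{D-V-V}, which removes the need to track derivatives there at all.
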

The proof of \Cref{RO-ultradistributions} is based on the following growth estimates of the Fourier transforms of the $\theta_\e$.
\begin{lemma}\label{growth-fourier-theta}
$\mbox{}$
\begin{enumerate}[label=(\roman*)]
\item\label{7.5.1} For all $\e \in I$ it holds that
\[ \sup_{\xi \in \R^d}|\widehat{\theta}_\e(\xi)| \leq \frac{1}{(2\pi)^d} \|\widehat{\chi}\|_{L^1(\R^d)}, \]
\item\label{7.5.2} for all $h, \lambda > 0$ there is $\e_0 > 0$ such that 
\[ \sup_{\e < \e_0} \sup_{|\xi| \geq 4/\e} |\widehat{\theta}_\e(\xi)| e^{M(\xi/h) - M(\lambda/\e)} < \infty. \]
\item\label{7.5.3} for all $\lambda > 0$ there is $\e_0 > 0$ such that 
\[ \sup_{\e < \e_0} \sup_{|\xi| \leq 2/\e} |1 - \widehat{\theta}_\e(\xi)| e^{M(\lambda/\e)} < \infty. \]
\end{enumerate}
\end{lemma}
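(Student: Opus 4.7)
The proofs rest on the convolution representation
\[
\widehat{\theta}_\e(\xi) = \frac{1}{(2\pi)^d}\bigl(\psi(\e\,\cdot) \ast r_\e^d\,\widehat{\chi}(r_\e\,\cdot)\bigr)(\xi),
\]
obtained by Fourier-transforming the product $\theta_\e = f_\e\cdot g_\e$ with $f_\e(x) = \e^{-d}\mathcal{F}^{-1}\psi(x/\e)$ and $g_\e(x) = \chi(x/r_\e)$. Part (i) then follows at once from Young's inequality and the substitution $\eta' = r_\e\eta$, using $0\le\psi\le 1$.

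For part (ii) the plan is to exploit that the factor $\psi(\e(\xi-\eta))$ restricts the integrand to $|\xi-\eta|\le 2/\e$, so that for $|\xi|\ge 4/\e$ one has $|r_\e(\xi-\eta)|\ge r_\e|\xi|/2$ on the support. Since $\chi\in\mathcal{D}^{(N_p)}$, the Paley--Wiener estimate $|\widehat{\chi}(z)|\le C_h e^{-N(|z|/h)}$ holds for every $h>0$, giving a pointwise bound $|\widehat\chi(r_\e(\xi-\eta))|\le C_h e^{-N(r_\e|\xi|/(2h))}$. Invoking \Cref{M_p-N_p} with $t = |\xi|/(2h)$ converts this into $\le C_h e^{M(\lambda/\e)-M(|\xi|/(2h))}$. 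Integrating over the $\eta$-support produces a volume factor polynomial in $1/\e$; it is swallowed by passing from $e^{M(\lambda/\e)}$ to $e^{M(\lambda'/\e)}$ with $\lambda'$ slightly larger, using $(M.2)$ in the form $M(Ht)\ge 2M(t)-\log A$, after which one recognizes the claimed bound with $h' := 2h$.

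For part (iii) the plan is to use the decomposition
\[
1 - \widehat{\theta}_\e(\xi) = \bigl(1 - \psi(\e\xi)\bigr) - \mathcal{F}\bigl[f_\e(g_\e - 1)\bigr](\xi),
\]
where $f_\e(g_\e - 1)$ is supported in $\{|x|\ge r_\e\}$. The first summand vanishes on the region where $\psi(\e\xi)=1$, and the remainder is controlled by
\[
\bigl|\mathcal{F}[f_\e(g_\e-1)](\xi)\bigr| \le \|f_\e(g_\e-1)\|_{L^1} \le 2\int_{|y|\ge r_\e/\e} |\mathcal{F}^{-1}\psi(y)|\,dy.
\]
Applying the $(M_p)$-Paley--Wiener decay of $\mathcal{F}^{-1}\psi$ at a scale $h$ coupled to $r_\e$ and then translating between $N$- and $M$-bounds by \Cref{M_p-N_p} yields the required $O(e^{-M(\lambda/\e)})$ estimate after the same $(M.2)$-bookkeeping as in part (ii).

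The main obstacle is the scale matching in parts (ii) and (iii): \Cref{M_p-N_p} converts $N$-decay into $M$-decay only at the price of a factor $e^{M(\lambda/\e)}$, so the Paley--Wiener scale $h$, the cutoff scale $r_\e$, and the target decay rate $\lambda/\e$ must be arranged coherently and $(M.2)$ invoked to absorb polynomial volume and seminorm factors in $1/\e$.
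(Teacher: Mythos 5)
Your parts (i) and (ii) are essentially correct. For (ii) you bound the $\eta$-integral by its volume, which costs a polynomial factor in $1/\e$ that you then absorb via $(M.2)$; the paper instead substitutes $t=r_\e(\xi-\eta)$ and splits $|\widehat{\chi}(t)|\le ACe^{-2N(2t/h)}$ into one factor supplying the decay $e^{-N(r_\e|\xi|/h)}$ and one that stays in the integral and produces the finite constant $\int_{\R^d}e^{-N(2t/h)}\,\dt$, so no volume factor ever appears. Both routes work because $h$ and $\lambda$ are universally quantified.

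Part (iii) has a genuine gap. Your decomposition $1-\widehat{\theta}_\e=(1-\psi(\e\,\cdot))-\mathcal{F}[f_\e(g_\e-1)]$ is algebraically correct and the first term vanishes for $|\xi|\le 1/\e$, but your bound
\[
\bigl|\mathcal{F}[f_\e(g_\e-1)](\xi)\bigr|\lesssim\int_{|y|\ge r_\e/\e}|\mathcal{F}^{-1}\psi(y)|\,\ud y
\]
places the entire burden of decay on $\mathcal{F}^{-1}\psi$. Since $\psi$ lies only in $\mathcal{D}^{(M_p)}(\R^d)$, the available Paley--Wiener estimate is $|\mathcal{F}^{-1}\psi(y)|\le C_he^{-M(|y|/h)}$ with $h$ fixed before $\e$, and the integral then decays like $e^{-M(cr_\e/\e)}$. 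Because $r_\e\to 0^+$, this is \emph{not} $O(e^{-M(\lambda/\e)})$ for fixed $\lambda$: eventually $cr_\e<\lambda$, and $M(\lambda t)-M(cr_\e t)$ is in general unbounded in $t$. \Cref{M_p-N_p} cannot repair this, since it bounds $M(t)$ by $N(r_\e t)+M(\lambda/\e)$; it converts the \emph{superior} $N$-decay at the shrunken argument $r_\e t$ into $M$-decay at $t$, and no analogous statement holds with $M(r_\e t)$ in place of $N(r_\e t)$. This is exactly why $\chi$ is taken in the strictly smaller class $\mathcal{D}^{(N_p)}$ with $N_p\prec M_p$ and why $r_\e$ is defined through \Cref{M_p-N_p}: the decay in (iii) must come from $\widehat{\chi}$. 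The paper's proof uses $\frac{r_\e^d}{(2\pi)^d}\int_{\R^d}\widehat{\chi}(r_\e(\xi-\eta))\,\deta=\chi(0)=1$ to write $1-\widehat{\theta}_\e(\xi)=\frac{r_\e^d}{(2\pi)^d}\int(1-\psi(\e\eta))\widehat{\chi}(r_\e(\xi-\eta))\,\deta$, notes that on the domain $|\eta|\ge 1/\e$ with $|\xi|\le 1/(2\e)$ one has $|r_\e(\xi-\eta)|\ge r_\e/(2\e)$, and then the $N$-decay of $\widehat{\chi}$ combined with \Cref{M_p-N_p} at $t=H\lambda/\e$ and $(M.2)$ gives $e^{-N(H\lambda r_\e/\e)}\le Ae^{-M(\lambda/\e)}$. (Incidentally, both the paper's argument and yours cover only $|\xi|\le 1/(2\e)$, resp.\ $|\xi|\le 1/\e$, rather than the region $|\xi|\le 2/\e$ in the statement; since only $|\xi|\le 1/(2\e)$ is used in the proof of $(\TO)_2$, the stated region appears to be a typo.)
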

\begin{proof}
Property \ref{7.5.1} is clear. We now show \ref{7.5.2}. Let $\e \in I$ be arbitrary. We have that 
\begin{align*}
|\widehat{\theta}_\e(\xi)| &= \frac{r^d_\e}{(2\pi)^d} \left | \int_{\R^d} \psi(\e \eta) \widehat{\chi}(r_\e(\xi- \eta)) \deta \right| \\
&\leq  \frac{r^d_\e}{(2\pi)^d}  \int_{|\eta| \leq \frac{2}{\e}} |\widehat{\chi}(r_\e(\xi- \eta))| \deta \\
&= \frac{1}{(2\pi)^d}  \int_{\left| \xi - \frac{t}{r_\e} \right| \leq \frac{2}{\e}} |\widehat{\chi}(t)| \dt.
\end{align*} 
By \cite[Lemma 3.3]{Komatsu} there is $C > 0$ such that
\[  |\widehat{\chi}(t)| \leq Ce^{-N(2Ht/h)} \leq ACe^{-2N(2t/h)}, \qquad t \in \R^d. \]
Furthermore, notice that for $\xi, t \in \R^d$ it holds that
\[ |\xi| \geq \frac{4}{\e} \mbox{ and } \left| \xi - \frac{t}{r_\e} \right| \leq \frac{2}{\e} \to |t| \geq \frac{r_\e |\xi|}{2}. \]
Hence we obtain that
\[ |\widehat{\theta}_\e(\xi)| \leq C'e^{-N(r_\e|\xi|/h)},\qquad |\xi| \geq \frac{4}{\e}, \]
where 
\[ C' = \frac{AC}{(2\pi)^d}\int_{\R^d} e^{-N(2t/h)} \dt < \infty. \]
The result now follows from \Cref{M_p-N_p}. Finally, we show \ref{7.5.3}. Let $\e \in I$ be arbitrary. We have that 
\begin{align*}
|1 - \widehat{\theta}_\e(\xi)| &= \left | 1 - \frac{r^d_\e}{(2\pi)^d}  \int_{\R^d} \psi(\e \eta) \widehat{\chi}(r_\e(\xi- \eta)) \deta \right| \\
& = \left |\frac{r^d_\e}{(2\pi)^d}  \int_{\R^d}(1 -  \psi(\e \eta)) \widehat{\chi}(r_\e(\xi- \eta)) \deta \right| \\
&\leq  \frac{r^d_\e}{(2\pi)^d}  \int_{|\eta| \geq \frac{1}{\e}} |\widehat{\chi}(r_\e(\xi- \eta))| \deta \\
&= \frac{1}{(2\pi)^d}  \int_{\left| \xi - \frac{t}{r_\e} \right| \geq \frac{1}{\e}} |\widehat{\chi}(t)| \dt.
\end{align*} 
By \cite[Lemma 3.3]{Komatsu} there is $C > 0$ such that
\[  |\widehat{\chi}(t)| \leq Ce^{-N(2H^2\lambda t)} \leq ACe^{-2N(2H\lambda t)}, \qquad t \in \R^d. \]
Furthermore, notice that for $\xi, t \in \R^d$ it holds that
\[ |\xi| \leq \frac{1}{2\e} \mbox{ and } \left| \xi - \frac{t}{r_\e} \right| \geq \frac{1}{\e} \to |t| \geq \frac{r_\e}{2\e}. \]
Hence we obtain that
\[ |1 - \widehat{\theta}_\e(\xi)| \leq C'e^{-N(H\lambda r_\e/\e)},\qquad |\xi| \leq \frac{1}{2\e}, \]
where 
\[ C' = \frac{AC}{(2\pi)^d}\int_{\R^d} e^{-N(2H\lambda t)} \dt < \infty. \]
By \Cref{M_p-N_p} there is $\e_0 > 0$ such that 
\[ M(t) \leq N(r_\e t) + M(\lambda/ \e), \qquad t > 0, \]
for all $\e < \e_0$. By setting $t = H\lambda/\e$ we obtain that
\[ N(H\lambda r_\e/\e) \geq M(H\lambda/\e) - M(\lambda/ \e) \geq M(\lambda /\e) - \log A, \]
for all $\e < \e_0$ and the result follows.
\end{proof}
\begin{proof}[Proof of \Cref{RO-ultradistributions}.]
For $\e \in I$ fixed we have that $\Phi_\e \in \Linc(\mathcal{D}'^\ast(\Omega), \mathcal{E}^\ast(\Omega))$ by \cite[Prop.\ 6.10]{Komatsu}.
The fact that  $(\Phi_\e)_\e$ is localizing follows easily from $\displaystyle \lim_{\e \to 0^+}r_\e = 0$. We now show that $(\Phi_\e)_\e$ satisfies $(\TO)_j$, $j = 1,2,3$. In the Roumieu case we use \Cref{test-objects-Roumieu}. 
$(\TO)_1$: We need to show that 
\begin{gather*}
\forall u \in  \mathcal{D}'^{(M_p)}(\Omega) \, \forall K \Subset \Omega \, \forall h > 0 \, \exists \lambda > 0 :  \| \Phi_\e(u)\|_{K,h}  = O(e^{M(\lambda/\e)}), \\
(\forall u \in  \mathcal{D}'^{\{M_p\}}(\Omega) \, \forall K \Subset \Omega \, \forall \lambda > 0 \, \exists h > 0 : \| \Phi_\e(u)\|_{K,h}  = O(e^{M(\lambda/\e)})).
\end{gather*}
There is $N \in \N$ such that  
\[ \supp \theta_\e( x - \cdot ) \subseteq K_N, \qquad x \in K, \]
for $\e$ small enough. Hence 
\[ \Phi_\e(u)(x) = (\kappa u \ast \theta_\e)(x),  \qquad x \in K, \]
where $\kappa = \kappa_N$, for $\e$ small enough. By \cite[Lemma 3.3]{Komatsu} it suffices to show that for all $h > 0$ there is  $\lambda > 0$ (for all $\lambda > 0$ there is  $h > 0$) such that
\[ \int_{\R^d} |\widehat{\kappa u}(\xi)| |\widehat{\theta}_\e(\xi)|e^{M(\xi/h)} \dxi = O(e^{M(\lambda/\e)}). \] 
There are $\mu > 0$ and $C > 0$ (for every $\mu > 0$ there is $C > 0$) such that
\[|\widehat{\kappa u}(\xi)| \leq C e^{M(\mu\xi)}, \qquad \xi \in \R^d. \]
\Cref{growth-fourier-theta} \ref{7.5.2} implies that for all $h,\lambda > 0$ (both in the Beurling and Roumieu case)
\[ \int_{|\xi| \geq \frac{4}{\e}} |\widehat{\kappa u}(\xi)| |\widehat{\theta}_\e(\xi)|e^{M(\xi/h)} \dxi = O(e^{M(\lambda/\e)}). \] 
On the other hand, by \Cref{growth-fourier-theta} \ref{7.5.1}, we have that
\begin{align*}
\int_{|\xi| \leq \frac{4}{\e}} |\widehat{\kappa u}(\xi)| |\widehat{\theta}_\e(\xi)|e^{M(\xi/h)} \dxi &\leq \frac{AC\|\widehat{\chi}\|_{L^1(\R^d)}}{(2\pi)^d} \int_{|\xi| 
\leq \frac{4}{\e}}e^{M(\mu\xi)+ M(H\xi/h) -M(\xi/h)} \dxi \\
&\leq C'e^{M(\lambda_0 /\e)}
\end{align*}
where $\lambda_0 = 4H\max(\mu, H/h)$ and 
\[ C' = \frac{AC\|\widehat{\chi}\|_{L^1(\R^d)}}{(2\pi)^d}\int_{\R^d} e^{-M(\xi/h)} \dxi < \infty. \]
The Beurling case follows at once while the Roumieu case follows by noticing that $\lambda_0$ can be made as small as desired by choosing $\mu$ small enough and $h$ big enough.

$(\TO)_2$: By \cite[Prop. 4.2]{D-V-V} it suffices to show that 
\begin{gather*}
\forall \varphi \in  \mathcal{E}^{(M_p)}(\Omega) \, \forall K \Subset \Omega \,  \forall \lambda > 0 :  \sup_{x \in K}| \Phi_\e(\varphi)(x) - \varphi(x)|  = O(e^{-M(\lambda/\e)}), \\
(\forall \varphi \in  \mathcal{E}^{\{M_p\}}(\Omega) \, \forall K \Subset \Omega \,  \exists \lambda > 0 : \sup_{x \in K}| \Phi_\e(\varphi)(x) - \varphi(x)|  = O(e^{-M(\lambda/\e)})).
\end{gather*}
There is $N \in \N$ such that  
\[ \supp \theta_\e( x - \cdot ) \subseteq K_N, \qquad x \in K, \]
for $\e$ small enough. Hence 
\[ \Phi_\e(\varphi)(x) - \varphi(x) = (\kappa \varphi \ast \theta_\e)(x) - \kappa(x)\varphi(x),  \qquad x \in K, \]
where $\kappa = \kappa_N$, and, thus, 
\begin{align*}
\sup_{x \in K}| \Phi_\e(\varphi)(x) - \varphi(x)|  
&= \sup_{x \in K} |(\kappa \varphi \ast \theta_\e)(x) - \kappa(x)\varphi(x)| \\
&\leq \frac{1}{(2\pi)^d} \int_{\R^d} |\widehat{\kappa \varphi}(\xi)| |1 - \widehat{\theta}_\e(\xi)| \dxi
\end{align*}
for $\e$ small enough. Therefore it suffices to show that for all $\lambda > 0$ (for some $ \lambda > 0$) it holds that
\[ \int_{\R^d} |\widehat{\kappa \varphi}(\xi)| |1 - \widehat{\theta}_\e(\xi)| \dxi = O(e^{-M(\lambda/\e)}). \] 
For every $\mu > 0$ there is $C > 0$ (there are $\mu, C > 0$) such that
\[ |\widehat{\kappa \varphi}(\xi)| \leq C e^{-M(H\mu\xi)} \leq ACe^{-2M(\mu \xi)}, \qquad \xi \in \R^d. \]
\Cref{growth-fourier-theta} \ref{7.5.3} implies that for all $\lambda > 0$ (both in the Beurling and Roumieu case)
\[ \int_{|\xi| \leq \frac{1}{2\e}} |\widehat{\kappa \varphi}(\xi)| |1 - \widehat{\theta}_\e(\xi)| \dxi = O(e^{-M(\lambda/\e)}). \] 
On the other hand, by \Cref{growth-fourier-theta} \ref{7.5.1}, we have that
\begin{align*}
\int_{|\xi| \geq \frac{1}{2\e}} |\widehat{\kappa \varphi}(\xi)| |1 - \widehat{\theta}_\e(\xi)| \dxi \leq C'e^{-M(\mu /(2\e))}
\end{align*}
where 
\[ C' = \left( 1 + \frac{\|\widehat{\chi}\|_{L^1(\R^d)}}{(2\pi)^d}\right)AC\int_{\R^d} e^{-M(\mu\xi)} \dxi < \infty. \]

$(\TO)_3$: Since the space $\mathcal{D}^*(\Omega)$ is Montel it suffices to show that for all $u \in \mathcal{D}'^*(\Omega)$ it holds that
\[ \lim_{\e \to 0^+} \int_{\R^d}\Phi_\e(u)(x) \varphi(x) \dx = \langle u, \varphi \rangle, \qquad \varphi \in \mathcal{D}^*(\Omega). \]
There is $N \in \N$ such that  
\[ \supp \theta_\e( x - \cdot ) \subseteq K_N, \qquad x \in \supp \varphi, \]
for $\e$ small enough. Hence, for $\kappa = \kappa_N$, we have that
\begin{align*}
\int_{\R^d}\Phi_\e(u)(x) \varphi(x) \dx &= \int_{\R^d} \langle u(y),  \kappa(y) \theta_\e(x - y) \rangle \varphi(x) \dx \\
&= \langle u(y), \kappa(y)\int_{\R^d}\theta_\e(x - y)\varphi(x) \dx \rangle \\
&=  \langle u(y), \kappa(y) \Phi_\e(\varphi)(y) \rangle \\
\end{align*}
for $\e$ small enough. The result now follows from $(\TO)_2$ and the continuity of $u$.
\end{proof}

As in \Cref{sec_appldist} we now obtain a fine sheaf $\cGsloc$ of algebras such that $\sigma \colon \cE^* \to \cGsloc$ is a sheaf homomorphism of algebras and $\iota \colon \cD^{*\prime}$ is a sheaf homomorphism of vector spaces.

The partial derivative $\partial_i$, $i = 1, \ldots, n$, satisfies the assumptions of \Cref{extension-der}, so it defines a mapping
\[ \widehat \partial_i \colon \cG^\ast_{\operatorname{loc}}(\Omega) \to \cG^\ast_{\operatorname{loc}}(\Omega) \]
which commutes with $\iota$.

Moreover, as seen in \cite[p.~626]{Komatsu2} real analytic coordinate transformations induce continuous mappings on the spaces $\cD^{*\prime}$ and $\cE^*$, so by \Cref{diff-global} we obtain corresponding actions on the quotient spaces $\cGsloc$.

\begin{theorem}\label{thmtwo}For each open set $\Omega \subseteq \R^n$ there is an associative commutative algebra with unit $\cGsloc(\Omega)$ containing $\cD^{*\prime}(\Omega)$ injectively 
as a linear subspace and $\cE^*(\Omega)$ as a subalgebra. $\cG^\ast_{\operatorname{loc}}(\Omega)$ is a differential algebra, where the partial derivatives $\widehat \partial_i$, $i = 1,\ldots,n$, extend the usual partial derivatives from $\cD^\ast(\Omega)$ to $\cG^\ast_{\operatorname{loc}}
(\Omega)$, and $\cGsloc$ is a fine sheaf of algebras over $\Omega$. Moreover, the construction is invariant under real-analytic coordinate changes, i.e., if $\mu \colon \Omega' \to \Omega$ is a real-analytic diffeomorphism then there is a map $\widehat \mu 
\colon \cGsloc(\Omega') \to \cGsloc(\Omega)$ compatible with the canonical embeddings $\iota$ and $\sigma$. 
\end{theorem}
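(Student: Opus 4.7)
The plan is to assemble Theorem \ref{thmtwo} from the abstract machinery already in place. First I would verify that $(\cD^{\ast\prime}, \cE^\ast)$ is a test pair of locally convex sheaves on $\R^n$: we have $\cE^\ast \subseteq \cD^{\ast\prime}$ with a finer topology, both are locally convex sheaves (the sheaf property for $\cD^{\ast\prime}$ is \cite[Thm.~5.6]{Komatsu}), and the property \ref{TP3} follows from the existence of $\ast$-partitions of unity \cite[Prop.~5.2]{Komatsu}, since multiplication by a fixed $\ast$-cutoff acts continuously on both $\cD^{\ast\prime}(U)$ and $\cE^\ast(U)$ for every open $U$. In particular, for any open $U \subseteq \Omega$ the sheaves $\cE^\ast|_U$ and $\cD^{\ast\prime}|_U$ are fine.

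Having a test pair of sheaves and an admissible pair of scales $\operatorname{sc}^\ast$, the nonemptiness of $\TO^\ast_{\operatorname{loc}}(\Omega)$ — which is \Cref{RO-ultradistributions} — lets me invoke the general sheaf construction of \Cref{local-theory} directly. This already produces the presheaf $\cGsloc$, shows it is actually a sheaf, provides the linear embedding $\iota \colon \cD^{\ast\prime} \hookrightarrow \cGsloc$ (as a sheaf morphism of vector spaces) with $\iota|_{\cE^\ast} = \sigma$, and yields fineness of $\cGsloc$ as a corollary. Since $\cE^\ast(\Omega)$ is a locally convex algebra under pointwise multiplication (by \cite[Thm.~2.6, 5.12, 2.8]{Komatsu}) and multiplication commutes with restriction, $\cE^\ast$ is a locally convex sheaf of algebras; the corollary to the quotient construction then gives that $\cGsloc$ is a sheaf of algebras and that $\sigma$ is an algebra homomorphism.

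For the differential structure, I apply \Cref{fasdf} (in its sheaf-theoretic form) to $T = \partial_i$: since $\partial_i$ is continuous on both $\cD^{\ast\prime}(\Omega)$ and $\cE^\ast(\Omega)$ (and commutes with restriction), we obtain an extension $\widehat \partial_i \colon \cGsloc(\Omega) \to \cGsloc(\Omega)$ which agrees with $\partial_i$ on $\iota(\cD^{\ast\prime}(\Omega))$. The Leibniz rule for $\widehat\partial_i$ on elements of $\sigma(\cE^\ast(\Omega))$ follows from the Leibniz rule for $\partial_i$ via \Cref{extension-mapping-global}.

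For the diffeomorphism invariance, given a real-analytic diffeomorphism $\mu \colon \Omega' \to \Omega$, the pullback $\mu^\ast$ is a linear topological isomorphism $\cE^\ast(\Omega) \to \cE^\ast(\Omega')$ and, by duality, $\cD^{\ast\prime}(\Omega) \to \cD^{\ast\prime}(\Omega')$ (this is precisely the point of requiring real-analyticity, cf.~\cite[p.~626]{Komatsu2}). Via \Cref{quotdiffeo} this induces $\widehat\mu \colon \cGsloc(\Omega') \to \cGsloc(\Omega)$ compatible with $\iota$ and $\sigma$. The main delicate step here is verifying the scale-invariance hypotheses of \Cref{diff-global}/\Cref{quotdiffeo}, namely that conjugation by $\mu^\ast$ preserves the spaces $\Lambda = \TO^\ast_{\operatorname{loc}}$ and $\Lambda^0 = \TO^{0,\ast}_{\operatorname{loc}}$; this reduces, via the characterization of test objects in \Cref{test-objects-Roumieu} and the analogous Beurling statement, to the fact that $\mu^\ast$ is continuous on the spaces defining the seminorms, which is built into the real-analytic hypothesis. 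This is the only step that is not purely a formal application of the previous sections, and it is therefore the expected obstacle; everything else is a direct reading-off of the conclusions of \Cref{local-theory} in this concrete setting.
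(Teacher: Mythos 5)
Your proposal is correct and follows essentially the same route as the paper: verify that $(\cD^{\ast\prime},\cE^\ast)$ is a test pair of sheaves via Komatsu's sheaf and partition-of-unity results, establish nonemptiness of $\TO^\ast_{\operatorname{loc}}$ (the content of \Cref{RO-ultradistributions}), and then read off the algebra, sheaf, derivation, and diffeomorphism-invariance statements from the general machinery of \Cref{local-theory}, \Cref{fasdf}, and \Cref{quotdiffeo}, with the coordinate-change continuity supplied by \cite[p.~626]{Komatsu2}. The paper's own treatment is exactly this assembly, so no further comparison is needed.
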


% \printbibliography

\newcommand{\doi}[1]{\textsc{doi:} \href{http://dx.doi.org/#1}{\texttt{#1}}}
\newcommand{\isbn}[1]{\textsc{isbn:} #1}
\newcommand{\issn}[1]{\textsc{issn:} #1}

% \bibliographystyle{abbrv}
% \bibliography{faf}

\begin{thebibliography}{20}
\bibitem{zbMATH06071218} K. Benmeriem and C. Bouzar. ``Generalized Gevrey ultradistributions and their
microlocal analysis.'' In: \textit{Pseudo-differential operators: analysis, applications and
computations}. Vol. 213. Oper. Theory Adv. Appl. Basel: Birkhäuser, 2011, pp. 235--250. \isbn{978-3-0348-0048-8}. \textsc{doi:} \href{http://dx.doi.org/10.1007/978-3-0348-0049-5_14}{\texttt{10.1007/978-3-0348-0049-5\textunderscore14}}.

\bibitem{ColNew} J. F. Colombeau. \textit{New generalized functions and multiplication of distributions}.
North-Holland Mathematics Studies 84. Amsterdam: North-Holland Publishing
Co., 1984. \isbn{978-0-444-86830-5}.

\bibitem{ColElem} J. F. Colombeau. \textit{Elementary introduction to new generalized functions}. North-Holland Mathematics Studies 113. Amsterdam: North-Holland Publishing Co.,
1985. \isbn{0-444-87756-8}.

\bibitem{D-V-V} A. Debrouwere, H. Vernaeve, and J. Vindas. ``Optimal embeddings of ultradistributions into differential algebras''. Monatsh. Math. (2017). In press. \doi{10.1007/s00605-017-1066-6}.

\bibitem{zbMATH02134784} A. Delcroix, M. F. Hasler, S. Pilipovi\'c, and V. Valmorin. ``Embeddings of ultradistributions and periodic hyperfunctions in Colombeau type algebras through
sequence spaces''. \textit{Math. Proc. Camb. Philos. Soc.} 137.3 (2004), 697--708.
\issn{0305-0041}. \doi{10.1017/S0305004104007923}.

\bibitem{1126.46030} A. Delcroix, M. F. Hasler, S. Pilipovi\'c, and V. Valmorin. ``Sequence spaces with
exponent weights. Realizations of Colombeau type algebras''. \textit{Diss. Math.} 447
(2007). \doi{10.4064/dm447-0-1}.

\bibitem{G-B} P. Giordano and L. Luperi Baglini. ``Asymptotic gauges: generalization of Colombeau
type algebras''. \textit{Math. Nachr.} 289.2-3 (2016), 247--274. \issn{0025-584X}.

\bibitem{zbMATH04205141} T. Gramchev. ``Nonlinear maps in spaces of distributions''. \textit{Math. Z.} 209.1 (1992),
101--114. \issn{0025-5874}. \doi{10.1007/BF02570824}.

\bibitem{found} M. Grosser, E. Farkas, M. Kunzinger, and R. Steinbauer. ``On the foundations
of nonlinear generalized functions I, II''. \textit{Mem. Am. Math. Soc.} 729 (2001). \issn{0065-9266}. \doi{10.1090/memo/0729}.

\bibitem{GKOS} M. Grosser, M. Kunzinger, M. Oberguggenberger, and R. Steinbauer. \textit{Geometric
theory of generalized functions with applications to general relativity}. Mathematics
and its Applications 537. Dordrecht: Kluwer Academic Publishers, 2001. \isbn{1-4020-0145-2}.

\bibitem{global} M. Grosser, M. Kunzinger, R. Steinbauer, and J. A. Vickers. ``A Global Theory
of Algebras of Generalized Functions''. \textit{Adv. Math.} 166.1 (2002), 50--72. \issn{0001-8708}. \doi{10.1006/aima.2001.2018}.

\bibitem{Komatsu} H. Komatsu. ``Ultradistributions. I: Structure theorems and a characterization''.
\textit{J. Fac. Sci., Univ. Tokyo, Sect. I A} 20 (1973), 25--105. \issn{0040-8980}.

\bibitem{Komatsu2} H. Komatsu. ``Ultradistributions. II: The kernel theorem and ultradistributions
with support in a submanifold''. \textit{J. Fac. Sci., Univ. Tokyo, Sect. I A} 24 (1977),
607--628. \issn{0040-8980}.

\bibitem{Komatsu3} H. Komatsu. ``Ultradistributions. III: Vector valued ultradistributions and the
theory of kernels''. \textit{J. Fac. Sci., Univ. Tokyo, Sect. I A} 29 (1982), 653--717.
\issn{0040-8980}.

\bibitem{KM} A. Kriegl and P. W. Michor. \textit{The convenient setting of global analysis}. Mathematical Surveys and Monographs 53. Providence, RI: American Mathematical
Society, 1997. \isbn{0-8218-0780-3}.

\bibitem{gf14proc} E. A. Nigsch. ``Some extensions to the functional analytic approach to Colombeau
algebras''. \textit{Novi Sad J. Math.} 45.1 (2015), 231--240.

\bibitem{papernew} E. A. Nigsch. ``The functional analytic foundation of Colombeau algebras''. \textit{J. Math. Anal. Appl.} 421.1 (2015), 415--435. \doi{10.1016/j.jmaa.2014.07}.
014.

\bibitem{bigone} E. A. Nigsch. ``Nonlinear generalized sections of vector bundles''. \textit{J. Math. Anal.
Appl.} 440 (2016), 183--219. \issn{0022-247X}. \doi{10.1016/j.jmaa.2016.
03.022}.

\bibitem{MOBook} M. Oberguggenberger. \textit{Multiplication of Distributions and Applications to Partial Differential Equations}. Pitman Research Notes in Mathematics 259. Harlow,
U.K.: Longman, 1992. \isbn{978-0-582-08733-0}.

\bibitem{zbMATH01618580} S. Pilipovi\'c and D. Scarpalezos. ``Colombeau generalized ultradistributions''. \textit{Math.
Proc. Camb. Philos. Soc.} 130.3 (2001), 541--553. \issn{0305-0041}. \doi{10.1017/S0305004101005072}.

\bibitem{TD} L. Schwartz. \textit{Th\'eorie des distributions}. Nouvelle \'edition, enti\`erement corrig\'ee,
refondue et augment\'ee. Paris: Hermann, 1966.
\end{thebibliography}

\end{document}